\theoremstyle{plain}
\newtheorem{theorem}{Theorem}
\newtheorem{lemma}[theorem]{Lemma}
\newtheorem{proposition}[theorem]{Proposition}
\newtheorem{corollary}[theorem]{Corollary}
\theoremstyle{definition}
\newtheorem{definition}[theorem]{Definition}
\newtheorem{remark}[theorem]{Remark}
\numberwithin{theorem}{section}
\numberwithin{equation}{section}
\newcommand{\DIV}[1]{{\mathrm{D}_{#1}^{\mathrm{IV}}}}
\newcommand{\C}{\mathbb{C}}
\newcommand{\N}{\mathbb{N}}
\newcommand{\R}{\mathbb{R}}
\newcommand{\T}{\mathbb{T}}
\newcommand{\GL}{\mathrm{GL}}
\newcommand{\SO}{\mathrm{SO}}
\newcommand{\Spe}{\mathrm{S}}
\newcommand{\Ad}{\mathrm{Ad}}
\newcommand{\U}{\mathrm{U}}
\newcommand{\mrT}[1]{\mathrm{T}_{#1}}
\newcommand{\fg}{\mathfrak{g}}
\newcommand{\fh}{\mathfrak{h}}
\newcommand{\ft}{\mathfrak{t}}
\newcommand{\so}{\mathfrak{so}}
\newcommand{\fu}{\mathfrak{u}}
\newcommand{\su}{\mathfrak{su}}
\newcommand{\gl}{\mathfrak{gl}}
\newcommand{\cT}{\mathcal{T}}
\newcommand{\cA}{\mathcal{A}}
\newcommand{\cB}{\mathcal{B}}
\newcommand{\cP}{\mathcal{P}}
\newcommand{\cS}{\mathcal{S}}
\newcommand{\HH}{\mathcal{H}}
\newcommand{\End}{\mathrm{End}}
\begin{document}

\title[Commuting Toeplitz, Cartan type IV and moment maps]{Commuting Toeplitz operators on Cartan domains of type IV and moment maps}

\author{Ra\'ul Quiroga-Barranco}
\address{Centro de Investigaci\'on en Matem\'aticas, Guanajuato, M\'exico}
\email{quiroga@cimat.mx}

\author{Monyrattanak Seng}
\address{Centro de Investigaci\'on en Matem\'aticas, Guanajuato, M\'exico}
\email{monyrattanak.seng@cimat.mx}

\keywords{Toeplitz operators, Bergman spaces, moment maps}

\subjclass[2010]{Primary 47B35; Secondary 22D10, 53D20}

\maketitle

\begin{abstract}
	Let us consider, for $n \geq 3$, the Cartan domain $\mathrm{D}_n^{\mathrm{IV}}$ of type IV. On the weighted Bergman spaces $\mathcal{A}^2_\lambda(\mathrm{D}_n^{\mathrm{IV}})$ we study the problem of the existence of commutative $C^*$-algebras generated by Toeplitz operators with special symbols. We focus on the subgroup $\mathrm{SO}(n) \times \mathrm{SO}(2)$ of biholomorphisms of $\mathrm{D}_n^{\mathrm{IV}}$ that fix the origin. The $\mathrm{SO}(n) \times \mathrm{SO}(2)$-invariant symbols yield Toeplitz operators that generate commutative $C^*$-algebras, but commutativity is lost when we consider symbols invariant under a maximal torus or under $\mathrm{SO}(2)$. We compute the moment map $\mu^{\mathrm{SO}(2)}$ for the $\mathrm{SO}(2)$-action on $\mathrm{D}_n^{\mathrm{IV}}$ considered as a symplectic manifold for the Bergman metric. We prove that the space of symbols of the form $a = f \circ \mu^{\mathrm{SO}(2)}$, denoted by $L^\infty(\mathrm{D}_n^{\mathrm{IV}})^{\mu^{\mathrm{SO}(2)}}$, yield Toeplitz operators that generate commutative $C^*$-algebras. Spectral integral formulas for these Toeplitz operators are also obtained.
\end{abstract}


\section{Introduction}
For a circled bounded symmetric domain $D$ in $\C^n$ we define the (weightless) Bergman space $\cA^2(D)$ as the space of holomorphic square-integrable functions with respect to the (normalized) Lebesgue measure $\dif v(z)$. This turns out to be a closed subspace of $L^2(D)$. Furthermore, $\cA^2(D)$ is a reproducing kernel Hilbert space and the orthogonal projection $B : L^2(D) \rightarrow \cA^2(D)$ is given by integration against the reproducing kernel. This naturally leads to the so-called Toeplitz operators, which are bounded operators on $\cA^2(D)$ of the form $T_a = B \circ M_a$, where $a \in L^\infty(D)$ is called the symbol of $T_a$. 

Bergman spaces and Toeplitz operators yield concrete examples of a Hilbert space and a family of operators to study. Furthermore, this setup is quite rich and not very restrictive. For example, it has been proved in \cite{EnglisDensity} that the Toeplitz operators are dense in the strong operator topology in the algebra of all bounded operators. This certainly implies that generically two given Toeplitz operators will not commute. Nevertheless, it has been found that by selecting suitable families of symbols the Toeplitz operators will in fact commute. More precisely, if we denote by $\cT(\cS)$ the unital Banach algebra generated by Toeplitz operators, then for many choices of $\cS \subset L^\infty(D)$ the algebra $\cT(\cS)$ is in fact a commutative $C^*$-algebra. Most of the examples of commutative $C^*$-algebras obtained in this way are infinite dimensional with very complicated structure. We refer to \cite{DOQJFA,DOQMatrix22,DQGesturVolume,GKVRadial,KorenblumZhu1995,QVUnitBall1,QVUnitBall2} for a few examples of these facts. Furthermore, even just the problem of finding commuting Toeplitz operators or studying their commutator has received some attention. We refer to \cite{AppuhamyLe2016,AxlerCuckovicRao2000,ChoeKooLee2004,Le2017,ZorbRadial2002} for some works related to these problems.

For a circled bounded symmetric domain $D$ we have at our disposal its group $\mathrm{Aut}(D)$ of biholomorphisms. For a few years now, it has been known that for  $H \subset \mathrm{Aut}(D)$ a suitably chosen subgroup, the space of $H$-invariant symbols, denoted by $L^\infty(D)^H$, yields a commutative $C^*$-algebra $\cT(L^\infty(D)^H)$. This phenomenon was first discovered in \cite{KorenblumZhu1995} for the unit disk $\mathbb{D}$ and the group $\T$ of rotations around the origin. Then, some more recent general results have been obtained for the unit ball and more general bounded symmetric domains (see the references mentioned above). In most cases, this requires a quite large subgroup $H$ to obtain commutative $C^*$-algebras generated by Toeplitz operators.

Most recently a new technique has been introduced to construct special families of symbols. The underlying symplectic geometry of the unit ball $\mathbb{B}^n$ was used in \cite{QSJFAUnitBall} to build commutative $C^*$-algebras generated by Toeplitz operators. For the case of the unit ball $\mathbb{B}^n$ it is known that maximal Abelian subgroups of $\mathrm{Aut}(\mathbb{B}^n)$ yield, by the invariance procedure described above, commutative $C^*$-algebras generated by Toeplitz operators (see \cite{QVUnitBall1,QVUnitBall2}). The results from \cite{QSJFAUnitBall} can be seen as providing an alternative symplectic-geometric interpretation of these results for the unit ball $\mathbb{B}^n$. More precisely, it was proved that, for a maximal Abelian subgroup $H \subset \mathrm{Aut}(\mathbb{B}^n)$ a symbol $a \in L^\infty(\mathbb{B}^n)$ is $H$-invariant if and only if for some function $f$ we can write $a = f\circ \mu^H$, where $\mu^H$ is the moment map for the symplectic $H$-action on $\mathbb{B}^n$. Hence, one can argue that the existence of commutative $C^*$-algebras generated by Toeplitz operators associated to a maximal Abelian subgroup $H$ is a consequence of the underlying symplectic geometry.

On the other hand, for bounded symmetric domains $D$ with rank at least $2$ it has been found that the maximal Abelian subgroups $H \subset \mathrm{Aut}(D)$ very rarely yield commutative $C^*$-algebras generated by Toeplitz operators, by simply using the symbols $L^\infty(D)^H$. This points to the unit ball $\mathbb{B}^n$, whose rank is $1$, as a very special case. Given the symplectic-geometric explanation of the situation for the unit ball, it is natural to consider, for bounded symmetric domains with higher rank, the problem of constructing commutative $C^*$-algebras generated by Toeplitz operators using moment maps. This is the setup for the main contribution of this work.

We consider the $n$-dimensional Cartan domain $\DIV{n}$ of type IV. This domain is defined in Section~\ref{sec:DIV} and we assume in the rest of this work that $n \geq 3$ to avoid the trivial lower dimensional cases. As described above, we have Bergman spaces and Toeplitz operators on the domain $\DIV{n}$, which we consider for the general weighted case. The subgroup of biholomorphisms that fix the origin is given by a linear action of $\SO(n)\times \SO(2)$. This group yields two very important actions. First, we have its symplectic action on $\DIV{n}$, and second, we have unitary representations $\pi_\lambda$ on the weighted Bergman spaces ($\lambda > n-1$) (see subsection~\ref{subsec:Bergmanmetric} and Section~\ref{sec:isotypic}, respectively). The maximal tori of $\SO(n)\times \SO(2)$ have dimension $\lfloor n/2\rfloor +1$ and we choose in subsection~\ref{subsec:isotypicSOnSO2} a (canonical) maximal torus denoted by $\mrT{n} \times \SO(2)$.

We state in Theorem~\ref{thm:comm-invariance} that the $C^*$-algebras $\cT^{(\lambda)}(L^\infty(\DIV{n})^{\SO(n) \times \SO(2)})$, corresponding to $\SO(n)\times \SO(2)$-invariant symbols, are commutative, while the $C^*$-algebras $\cT^{(\lambda)}(L^\infty(\DIV{n})^{\mrT{n} \times \SO(2)})$, corresponding to $\mrT{n} \times \SO(2)$-invariant symbols, are not commutative. The former claim was already known from \cite{DOQJFA} but the latter is new. Note that $\mrT{n} \times \SO(2)$ is a maximal Abelian subgroup of the biholomorphism group of $\DIV{n}$, thus providing an example of the behavior noted above for higher rank bounded symmetric domains. Recall that the rank of $\DIV{n}$ is $2$ for every $n$. Theorem~\ref{thm:comm-invariance} also proves that the $C^*$-algebra $\cT^{(\lambda)}(L^\infty(\DIV{n})^{\SO(n-1) \times \SO(2)})$ is not commutative. As noted in Remark~\ref{rmk:noncommMaxConn}, the subgroup $\SO(n-1) \times \SO(2)$ is maximal connected in $\SO(n) \times \SO(2)$. Hence, not even dropping the Abelian property of a subgroup of $\SO(n)\times \SO(2)$ allows us to obtain commutative $C^*$-algebras generated by Toeplitz operators by using invariant symbols for a still quite large subgroup.

In Section~\ref{sec:momentmaps} we compute in Theorem~\ref{thm:momentmapTnSO(2)} the moment map for the symplectic $\mrT{n} \times \SO(2)$-action on $\DIV{n}$. As a consequence we obtain in Corollary~\ref{cor:momentmapSO(2)} the moment map for the $\SO(2)$-action on $\DIV{n}$. In this same section, we consider moment map symbols for $\SO(2)$ as symbols of the form $a = f \circ \mu^{\SO(2)}$, where $f$ is some function and $\mu^{\SO(2)}$ is the moment map computed in Corollary~\ref{cor:momentmapSO(2)}. See also Definition~\ref{def:momentmapsymbol} for a more general notion. The space of moment map symbols for $\SO(2)$ is denoted by $L^\infty(\DIV{n})^{\mu^{\SO(2)}}$. As one of our main results, we prove in Theorem~\ref{thm:momentmapSO(2)Toeplitz} that the $C^*$-algebra $\cT^{(\lambda)}(L^\infty(\DIV{n})^{\mu^{\SO(2)}})$ is commutative. Furthermore, we also find an explicit Hilbert direct sum decomposition of $\cA^2_\lambda(\DIV{n})$ with respect to which all the elements of $\cT^{(\lambda)}(L^\infty(\DIV{n})^{\mu^{\SO(2)}})$ are simultaneously diagonalized.

The next problem is to explicitly compute the coefficients that diagonalize a Toeplitz operator $T^{(\lambda)}_a$, in terms of the symbol $a$, when the symbol belongs to $L^\infty(\DIV{n})^{\mu^{\SO(2)}}$. Such explicit formula is obtained in Theorem~\ref{thm:specintToeplitzSO(2)}. Similar formulas were obtained in \cite{DQGesturVolume}, and in fact we use those to obtain ours. The formulas from \cite{DQGesturVolume} require computing square roots for positive elements in the Jordan algebra associated to $\DIV{n}$. We achieve this in Corollary~\ref{cor:JordanRoots}, which yields a fairly complicated formula for the square roots. However, the moment map $\mu^{\SO(2)}$ and the formula for the square roots play together very well to provide the fairly simple spectral integral formula shown in Theorem~\ref{thm:specintToeplitzSO(2)}. This formula and the fact that the very small subgroup $\SO(2)$ yields commutative $C^*$-algebras using its moment map, highlight the importance of symplectic geometry in the solution of this operator theory problems. One can safely conjecture that the existence of many of the known commutative $C^*$-algebras generated by Toeplitz operators can be explained by symplectic geometric means.

As for the details of our work, one of our main tools is the use of representation theory of compact groups. In particular, we compute in Section~\ref{sec:isotypic} the isotypic decompositions for the representation $\pi_\lambda$ of the group $\SO(n) \times \SO(2)$ on the weighted Bergman spaces $\cA^2_\lambda(\DIV{n})$. We also compute such decompositions for the restrictions of $\pi_\lambda$ to $\SO(n-1) \times \SO(2)$, $\mrT{n} \times \SO(2)$ and $\SO(2)$. Then, the criterion provided by Theorem~\ref{thm:multfree-comm} allows us to determine the commutativity of the $C^*$-algebras generated by Toeplitz operators with $H$-invariant symbols in terms of the isotypic decomposition of $\cA^2_\lambda(\DIV{n})$ corresponding to $H$.

In Section~\ref{sec:momentmaps} we compute the moment maps of $\mrT{n} \times \SO(2)$ and $\SO(2)$. And in Section~\ref{sec:Toeplitzmomentmap} we prove the main results which, as noted above, involve some computations with Jordan algebras. It is worthwhile to note that the commutativity of the $C^*$-algebras $\cT^{(\lambda)}(L^\infty(\DIV{n})^{\mu^{\SO(2)}})$ can be explained by a very peculiar relationship between the groups $\SO(n) \times \SO(2)$ and $\SO(2)$. To be more precise, the moment map $\mu^{\SO(2)}$ is not only $\SO(2)$-invariant, as it must be by definition, but also $\SO(n) \times \SO(2)$-invariant. We use this to obtain the commutativity of $\cT^{(\lambda)}(L^\infty(\DIV{n})^{\mu^{\SO(2)}})$ (see the proof of Theorem~\ref{thm:momentmapSO(2)Toeplitz}) and to be able to compute the spectral integral formulas in Theorem~\ref{thm:specintToeplitzSO(2)}.

\section{The Cartan domains of type IV}\label{sec:DIV}
We will denote by $\DIV{n}$ the $n$-dimensional Cartan domain of type IV, which is given by
\[
    \DIV{n}= \{z \in \C^n \mid |z|<1,\; 2|z|^2 < 1+|z^\top z|^2\},
\]
where the elements of $\C^n$ are considered from now as columns. It is very well known that $\DIV{n}$ is a circled bounded symmetric domain for every $n \geq 1$ (see \cite{Mok1989,Upmeier1996}). For $n=1$, it is easy to see that this domain is the unit disk $\mathbb{D}$, since the second defining condition is always satisfied in this case. Also, using the arguments found in \cite{Mok1989}, it is easy to see that there is a biholomorphism between $\DIV{2}$ and $\mathbb{D}^2$. On the other hand, for every $n \geq 3$, the bounded symmetric domain $\DIV{n}$ is irreducible (see \cite{HelgasonDGLSS,Mok1989}). For this reason, we will consider the case $n \geq 3$ in the rest of this work. On the other hand, it is well known that $\so(4,2) \simeq \su(2,2)$ (see \cite{HelgasonDGLSS}) and so the domain $\DIV{4}$ is biholomorphically equivalent to the Cartan domain $\mathrm{D}^{\mathrm{I}}_{2\times2}$. The latter is defined as the set of complex $2 \times 2$ matrices $Z$ that satisfy $Z^*Z < I_2$.

The connected component of the biholomorphism group of $\DIV{n}$ is isomorphic to $\SO_0(n,2)$. The proof of this can be found in \cite{HelgasonDGLSS,Mok1989,Satake1980}. In particular, \cite{Satake1980} shows that the connected component of the biholomorphism group of $\DIV{n}$ is realized through an action of $\SO_0(n,2)$ by quadratic fractional transformations. On the other hand, the maximal compact subgroup $\SO(n)\times\SO(2)$ of $\SO_0(n,2)$ acts linearly on $\DIV{n}$ by the expression
\begin{align}\label{eq:SOnSO2Action}
  \SO(n)\times\SO(2) \times \DIV{n} &\rightarrow \DIV{n} \\
  (A,t)\cdot z &= tAz, \notag
\end{align}
and this action realizes the biholomorphisms of $\DIV{n}$ that fix the origin. Note that we have used the natural isomorphism $\SO(2) \simeq \T$, where $\T$ denotes the unit circle in $\C$, that comes from the identification $\C \simeq \R^2$. More precisely, we consider the isomorphism
\begin{align}
	\T &\rightarrow \SO(2) \label{eq:TSO(2)} \\
	e^{i\vartheta} &\mapsto 
		\begin{pmatrix}
			\cos(\vartheta) & \sin(\vartheta) \\
			-\sin(\vartheta) & \cos(\vartheta)
		\end{pmatrix}, \notag
\end{align}
where $\vartheta \in \R$. We will use such identifications in the rest of this work.

From the previous remarks, it follows that we have a natural diffeomorphism
\[
    \DIV{n} \simeq \SO_0(n,2)/\SO(n)\times\SO(2)
\]
that is in fact a biholomorphism for the structure of Hermitian symmetric space for the quotient on the right. We refer to \cite{HelgasonDGLSS,Mok1989,Satake1980} for further details. We observe that $\SO(n)$ is a simple Lie group except for $n=2,4$. Nevertheless, we recall our standing assumption $n \geq 3$. 

In the notation of \cite{Upmeier1996}, the domain $\DIV{n}$ has genus $n$, rank $2$ and characteristic multiplicities $a = n-2$ and $b = 0$. In particular, the vanishing of the last value implies that $\DIV{n}$ is a bounded domain with a tube-type unbounded realization.

\subsection{Bergman spaces and Toeplitz operators}
Let us consider the Lebesgue measure $\dif v(z)$ on $\C^n$ normalized so that $v(\DIV{n}) = 1$. Then, we will denote by $\cA^2(\DIV{n})$ the (weightless) Bergman space over $\DIV{n}$ which consists of the holomorphic functions that belong to $L^2(\DIV{n},v)$. It is well known (see \cite{HelgasonDGLSS,Upmeier1996}) that this is a closed subspace of $L^2(\DIV{n},v)$ that admits a reproducing kernel. In particular, the orthogonal projection $B : L^2(\DIV{n},v) \rightarrow \cA^2(\DIV{n})$, called the (weightless) Bergman projection, is given by
\[
    B(f)(z) = \int_{\DIV{n}} f(w) K(z,w) \dif v(w),
\]
for every $f \in L^2(\DIV{n},v)$ and $z \in \DIV{n}$, where $K$ is the (weightless) Bergman reproducing kernel. The results from \cite{HuaBook} (see also \cite{LoosBSDJ}) imply that the function $K : \DIV{n}\times\DIV{n} \rightarrow \C$ is given by
\[
    K(z,w) = \big(1 -2z^\top \overline{w} + (z^\top z)\overline{(w^\top w)}\big)^{-n}.
\]

Following Section 2.9 from \cite{Upmeier1996}, we consider the weighted measure
\[
    \big(1 -2|z|^2 + |z^\top z|^2)^{\lambda - n} \dif v(z)
\]
which is finite on $\DIV{n}$ precisely for $\lambda > n-1$. Hence, for every such $\lambda$, we choose a normalizing constant $c_\lambda > 0$ such that the measure
\[
    \dif v_\lambda(z) = c_\lambda \big(1 -2|z|^2 + |z^\top z|^2)^{\lambda - n} \dif v(z)
\]
satisfies $v_\lambda(\DIV{n}) = 1$. From this, we define for every $\lambda > n-1$, the weighted Bergman space $\cA^2_\lambda(\DIV{n})$ as the subspace of holomorphic functions that belong to $L^2(\DIV{n},v_\lambda)$. As before, this is a closed subspace of $L^2(\DIV{n},v_\lambda)$ that admits a reproducing kernel. So that the orthogonal projection, called the Bergman projection, $B_\lambda : L^2(\DIV{n},v_\lambda) \rightarrow \cA^2_\lambda(\DIV{n})$ is given by
\[
    B_\lambda(f)(z) = \int_{\DIV{n}} f(w) K_\lambda(z,w) \dif v_\lambda(w),
\]
for every $f \in L^2(\DIV{n},v)$ and $z \in \DIV{n}$, where the weighted Bergman reproducing kernel $K_\lambda : \DIV{n}\times\DIV{n} \rightarrow \C$ has the expression (see \cite{Upmeier1996})
\[
    K_\lambda(z,w) = \big(1 -2z^\top \overline{w} + (z^\top z)\overline{(w^\top w)}\big)^{-\lambda}.
\]
In particular, the case $\lambda = n$ corresponds to the above weightless case. In other words, $v = v_n$, $\cA^2(\DIV{n}) = \cA^2_n(\DIV{n})$ and $K = K_n$.

We note that all the measures $v_\lambda$ for $\lambda > n-1$ yield the same null subsets. Hence, we can consider $L^\infty(\DIV{n})$ as given for any such measure. For every $a \in L^\infty(\DIV{n})$ we define the Toeplitz operator with symbol $a$ acting on $\cA^2_\lambda(\DIV{n})$ (for $\lambda > n-1$) as the bounded operator $T_a = T^{(\lambda)}_a \in \cB(\cA^2_\lambda(\DIV{n}))$ given by $T^{(\lambda)}_a = B_\lambda \circ M_a$. In particular, we have
\[
    T^{(\lambda)}_a(f)(z) = \int_{\DIV{n}} a(w) f(w) K_\lambda(z,w) \dif v_\lambda(w),
\]
for every $f \in \cA^2_\lambda(\DIV{n})$ and $z \in \DIV{n}$.

For a given subspace $\cS \subset L^\infty(\DIV{n})$, we will denote by $\cT^{(\lambda)}(\cS)$ the unital Banach algebra generated by Toeplitz operators whose symbols belong to $\cS$. In this work we will consider subspaces $\cS$ that are self-adjoint in $L^\infty(\DIV{n})$. From the elementary properties of Toeplitz operators it follows that if $\cS$ is self-adjoint, then $\cT^{(\lambda)}(\cS)$ is also the unital $C^*$-algebra generated by Toeplitz operators with symbols in $\cS$.

\subsection{The Bergman metric of $\DIV{n}$}
\label{subsec:Bergmanmetric}
We recall (see \cite{HelgasonDGLSS,Mok1989}) that for any bounded symmetric domain, there is a naturally associated Hermitian metric so that the biholomorphisms act by isometries. More precisely, for an $n$-dimensional bounded symmetric domain $D$ with (weightless) reproducing kernel $K$ this Hermitian metric, called the Bergman metric, is given by
\[
    g_z = c \sum_{j,k=1}^{n} \frac{\partial^2}{\partial z_j \partial \overline{z}_k} \log K(z,z) \dif z_j \otimes \dif \overline{z}_k,
\]
where $c > 0$ is any conveniently chosen normalizing constant. We will apply this construction to our domain $\DIV{n}$.

In the rest of this work, we will denote by $\Delta : \DIV{n} \rightarrow \C$ the function given by
\begin{equation}\label{eq:Delta}
    \Delta(z) = 1 - 2|z|^2 + |z^\top z|^2.
\end{equation}
In particular, we have $K_\lambda(z,z) = \Delta(z)^{-\lambda}$, for every $z \in \DIV{n}$ and $\lambda > n-1$. It is well known that this completely determines the weighted Bergman kernels.

The following result is a consequence of the previous remarks and straightforward computations. Nevertheless, we show some of these computations since they will be useful latter on. 

\begin{proposition}\label{prop:BergmanMetric}
    For the domain $\DIV{n}$, the Bergman metric is given by
    \begin{align*}
      g_z &= \frac{1}{2n} \sum_{j,k=1}^{n} \frac{\partial^2}{\partial z_j \partial \overline{z}_k} \log K(z,z) \dif z_j \otimes \dif \overline{z}_k \\
          &= \sum_{j,k=1}^{n} \frac{\Delta(z)\big(\delta_{jk} - 2z_j\overline{z}_k\big) + 2\big(\overline{z}_j - z_j \overline{(z^\top z)}\big) \big(z_k - \overline{z}_k (z^\top z)\big)}{\Delta(z)^2} \dif z_j \otimes \dif \overline{z}_k,
    \end{align*}
    for every $z \in \DIV{n}$.
\end{proposition}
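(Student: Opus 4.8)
The plan is to compute the Bergman metric directly from its definition, carrying out the logarithmic Hessian of the reproducing kernel. The first task is to recognize that the normalizing constant is fixed by the requirement that the metric be the Bergman metric for this specific domain; since $K(z,z) = \Delta(z)^{-n}$ for the weightless case $\lambda = n$, we have $\log K(z,z) = -n \log \Delta(z)$, and the factor $\frac{1}{2n}$ in the statement is precisely the choice $c = \frac{1}{2n}$ that absorbs the $-n$ and produces a clean normalization. So the real content is the computation of the mixed second partials $\frac{\partial^2}{\partial z_j \partial \overline{z}_k} \log \Delta(z)$, up to the overall scalar $-n \cdot \frac{1}{2n} = -\frac{1}{2}$.

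The key computational steps are as follows. I would first record the first-order partial derivatives of $\Delta(z) = 1 - 2|z|^2 + |z^\top z|^2 = 1 - 2 z^\top \overline{z} + (z^\top z)\overline{(z^\top z)}$. Writing $w = z^\top z$ for brevity, one gets $\frac{\partial \Delta}{\partial \overline{z}_k} = -2 z_k + z_k \, 2\overline{(z^\top z)}$ — more carefully, $\frac{\partial}{\partial \overline{z}_k}(z^\top \overline{z}) = z_k$ and $\frac{\partial}{\partial \overline{z}_k}\overline{(z^\top z)} = 2\overline{z}_k$, so that $\frac{\partial \Delta}{\partial \overline{z}_k} = -2 z_k + 2(z^\top z)\overline{z}_k = -2\big(z_k - \overline{z}_k (z^\top z)\big)$. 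By conjugate symmetry, $\frac{\partial \Delta}{\partial z_j} = -2\big(\overline{z}_j - z_j \overline{(z^\top z)}\big)$. The logarithmic Hessian then expands via the quotient rule as
\[
    \frac{\partial^2}{\partial z_j \partial \overline{z}_k} \log \Delta = \frac{1}{\Delta}\frac{\partial^2 \Delta}{\partial z_j \partial \overline{z}_k} - \frac{1}{\Delta^2}\frac{\partial \Delta}{\partial z_j}\frac{\partial \Delta}{\partial \overline{z}_k}.
\]
For the second-mixed partial of $\Delta$ itself, differentiating $\frac{\partial \Delta}{\partial \overline{z}_k} = -2 z_k + 2(z^\top z)\overline{z}_k$ with respect to $z_j$ gives $\frac{\partial^2 \Delta}{\partial z_j \partial \overline{z}_k} = -2\delta_{jk} + 4 z_j \overline{z}_k = -2\big(\delta_{jk} - 2 z_j \overline{z}_k\big)$. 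Substituting these into the Hessian and multiplying by $-\frac{1}{2}$, the two terms combine to yield exactly the numerator $\Delta(z)\big(\delta_{jk} - 2z_j\overline{z}_k\big) + 2\big(\overline{z}_j - z_j \overline{(z^\top z)}\big)\big(z_k - \overline{z}_k (z^\top z)\big)$ over $\Delta(z)^2$.

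I anticipate that no single step presents a genuine obstacle; this is a direct Wirtinger-calculus computation. The only points demanding care are the bookkeeping of which index is holomorphic versus antiholomorphic (so that the off-diagonal factors $\overline{z}_j - z_j \overline{(z^\top z)}$ and $z_k - \overline{z}_k(z^\top z)$ are correctly paired and not accidentally conjugated or swapped), and tracking the signs so that the overall factor of $-\frac{1}{2}$ produces positive-definiteness with the stated numerators. To present this cleanly, I would state the first derivatives as a preliminary display, then the quotient-rule expansion, then collect terms; the matching of signs between $-2(\delta_{jk} - 2z_j\overline{z}_k)/\Delta$ and the cross term is the one place worth writing out explicitly, since that is where a sign slip would propagate. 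Since the proposition itself advertises that the computation will be reused later, I would keep the intermediate first-derivative formulas visible rather than suppressing them.
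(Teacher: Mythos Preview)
Your proposal is correct and follows essentially the same approach as the paper: both carry out the direct Wirtinger-calculus computation of $\partial_{z_j}\partial_{\overline z_k}\log\Delta(z)$, recording the first partials $\partial_{z_j}\Delta = -2(\overline z_j - z_j\overline{(z^\top z)})$ and then taking one more derivative. The only cosmetic difference is that the paper differentiates the quotient $\frac{\overline z_j - z_j\overline{(z^\top z)}}{\Delta(z)}$ directly, whereas you first write the general logarithmic-Hessian identity and then substitute; the intermediate expressions and the final answer coincide.
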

\begin{proof}
    In the first place we have
    \begin{align*}
      \frac{\partial}{\partial z_j} \log K(z,z)
        &= -n \frac{\partial}{\partial z_j} \log \Delta(z)
            = -\frac{n}{\Delta(z)}  \frac{\partial\Delta}{\partial z_j}(z) \\
        &= \frac{2n}{\Delta(z)}(\overline{z}_j - z_j \overline{(z^\top z)}).
    \end{align*}
    And from this it follows that
    \begin{align*}
      \frac{\partial^2}{\partial z_j \partial \overline{z}_k} \log K(z,z)
        &= 2n \frac{\partial}{\partial \overline{z}_k} \Bigg(\frac{\overline{z}_j - z_j \overline{(z^\top z)}}{\Delta(z)}\Bigg) \\
        &= 2n \frac{\Delta(z)\big(\delta_{jk} - 2z_j\overline{z}_k\big) + 2\big(\overline{z}_j - z_j \overline{(z^\top z)}\big) \big(z_k - \overline{z}_k (z^\top z)\big)}{\Delta(z)^2},
    \end{align*}
    which yields the result.
\end{proof}

We recall some differential geometric notions that will be applied to the domain $\DIV{n}$. We refer to \cite{HelgasonDGLSS,Mok1989} for further details.

If $M$ is a complex manifold with a Hermitian metric $g$ and complex structure $J$, then the associated $2$-form $\omega$ is given by
\[
    \omega(X,Y) = g(JX,Y)
\]
for every pair of smooth vector fields in $M$. In particular, the $2$-form $\omega$ defines a non-degenerate anti-symmetric bilinear form at every point of $M$. The complex manifold with the Hermitian metric $g$ is called K\"ahler if the associated $2$-form $\omega$ is closed. If this is the case, $\omega$ is a non-degenerate closed $2$-form, and $M$ is called a symplectic manifold with symplectic form $\omega$.

It is well known (see \cite{HelgasonDGLSS,Mok1989}) that any bounded domain with its Bergman metric is K\"ahler. We state this in the following result for the domain $\DIV{n}$ and also give the explicit expression for the associated symplectic form.

\begin{proposition}\label{prop:SympForm}
    The domain $\DIV{n}$ with its Bergman metric $g$ as given in Proposition~\ref{prop:BergmanMetric} is a K\"ahler manifold. Furthermore, the associated symplectic form is given~by
    \[
        \omega_z = i \sum_{j,k=1}^{n} \frac{\Delta(z)\big(\delta_{jk} - 2z_j\overline{z}_k\big) + 2\big(\overline{z}_j - z_j \overline{(z^\top z)}\big) \big(z_k - \overline{z}_k (z^\top z)\big)}{\Delta(z)^2} \dif z_j \wedge \dif \overline{z}_k,
    \]
    for every $z \in \DIV{n}$, where $\Delta(z) = 1 - 2|z|^2 + |z^\top z|^2$.
\end{proposition}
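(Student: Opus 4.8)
The plan is to exhibit the Bergman metric as coming from a global K\"ahler potential, from which both the explicit form of $\omega$ and its closedness follow essentially for free. The genuine computation, namely the second mixed derivatives of $\log K(z,z)$, has already been carried out in Proposition~\ref{prop:BergmanMetric}, so the remaining work is formal.

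First I would recall the general relation between a Hermitian metric of the form $g_z = \sum_{j,k} h_{jk}(z)\, \dif z_j \otimes \dif\overline{z}_k$ and its associated $2$-form. Using the complex structure $J$, which acts by $J\partial_{z_j} = i\partial_{z_j}$ and $J\partial_{\overline{z}_j} = -i\partial_{\overline{z}_j}$, a direct evaluation of $\omega(X,Y) = g(JX,Y)$ on the coordinate frame $\partial_{z_j}, \partial_{\overline{z}_k}$ shows that $\omega_z = i\sum_{j,k} h_{jk}(z)\, \dif z_j \wedge \dif\overline{z}_k$. Substituting the coefficients $h_{jk}$ from Proposition~\ref{prop:BergmanMetric} then yields at once the claimed expression for $\omega_z$. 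The only point requiring attention here is the bookkeeping of the factor $i$ and the sign conventions relating $g$, $J$ and $\omega$; once these are fixed the identification is purely algebraic.

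It then remains to prove that $\omega$ is closed. For this I would introduce the global potential $\varphi(z) = \frac{1}{2n}\log K(z,z) = -\frac{1}{2}\log\Delta(z)$, which is smooth on $\DIV{n}$ since $\Delta$ is real and strictly positive there. By Proposition~\ref{prop:BergmanMetric} we have $h_{jk} = \partial_{z_j}\partial_{\overline{z}_k}\varphi$, so that $\omega = i\,\partial\overline{\partial}\varphi$ in terms of the Dolbeault operators. Closedness is then immediate: decomposing $d = \partial + \overline{\partial}$ gives $d\omega = i\,\partial\partial\overline{\partial}\varphi + i\,\overline{\partial}\partial\overline{\partial}\varphi$, and both terms vanish because $\partial^2 = 0$ and $\overline{\partial}^2 = 0$ (using also $\partial\overline{\partial} = -\overline{\partial}\partial$ for the second term). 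Hence $\omega$ is a non-degenerate closed $2$-form and $\DIV{n}$ with its Bergman metric is K\"ahler. I do not anticipate any real obstacle in this argument, the only thing to watch being the consistency of conventions for the factor $i$, which should be verified directly against $\omega(X,Y)=g(JX,Y)$ on the coordinate frame.
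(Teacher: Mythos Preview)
Your proposal is correct and follows essentially the same route as the paper: both obtain the formula for $\omega$ by invoking the general identity $\omega = i\sum_{j,k} g_{jk}\,\dif z_j\wedge\dif\overline{z}_k$ and plugging in the coefficients from Proposition~\ref{prop:BergmanMetric}. The only difference is that the paper cites the K\"ahler property as a known fact for bounded domains with the Bergman metric, whereas you supply the standard potential argument $\omega = i\,\partial\overline{\partial}\varphi$ explicitly; this is a harmless elaboration rather than a genuinely different approach.
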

\begin{proof}
    The first claim was already noted above. With our definition of associated symplectic form, the formula in the statement follows from the well known fact (see~\cite{Mok1989}) that for any K\"ahler manifold with metric given by
    \[
        g = \sum_{j,k=1}^{n} g_{jk} \dif z_j \otimes \dif \overline{z}_k,
    \]
    the associated symplectic form is given by
    \[
        \omega = i\sum_{j,k=1}^{n} g_{jk} \dif z_j \wedge \dif \overline{z}_k.
    \]
\end{proof}

In the rest of this work we will consider $\DIV{n}$ both as a K\"ahler and a symplectic manifold with the K\"ahler metric $g$ and the symplectic form $\omega$ given in Propositions~\ref{prop:BergmanMetric} and \ref{prop:SympForm}, respectively. We will also use the well known fact that every biholomorphism of $\DIV{n}$ preserves both $g$ and $\omega$ (see \cite{HelgasonDGLSS}). In particular, every biholomorphism of $\DIV{n}$ is a symplectomorphism of $(\DIV{n}, \omega)$.

\section{The representation of $\SO(n)\times\SO(2)$ and commutativity of $C^*$-algebras}\label{sec:isotypic}
We will describe in this section, the induced action of $\SO(n)\times\SO(2)$ on the Bergman spaces from the action given in \eqref{eq:SOnSO2Action}. First, we will compute natural decompositions for the Bergman spaces associated to some subgroups of $\SO(n) \times \SO(2)$. This will occupy the first subsections. Next, in the last subsection, we will consider Toeplitz operators with invariant symbols for such subgroups and determine in which cases those operators generate commutative $C^*$-algebras.

The first thing to note is that a straightforward computation shows that the function $\Delta$ given in \eqref{eq:Delta} is $\SO(n)\times\SO(2)$-invariant. Clearly, the Lebesgue measure is also $\SO(n)\times\SO(2)$-invariant, and so it follows from the definition of the weighted measures $v_\lambda$ that these have the same invariance under $\SO(n)\times\SO(2)$. Hence, we easily conclude the following result. Note that one uses the fact that the $\SO(n)\times\SO(2)$-action on $\DIV{n}$ is complex linear and so holomorphic.

\begin{proposition}\label{prop:pilambda}
   For every $\lambda > n-1$, the action given by
   \begin{align*}
     \pi_\lambda : \SO(n)\times\SO(2) \times \cA^2_\lambda(\DIV{n}) &\rightarrow \cA^2_\lambda(\DIV{n}) \\
     (\pi_\lambda(A,t)\cdot f) (z) &= f(\overline{t}A^{-1}z)
   \end{align*}
   yields a unitary representation that is continuous in the strong operator topology.
\end{proposition}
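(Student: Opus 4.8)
The plan is to verify that $\pi_\lambda$ is a well-defined unitary representation by checking three things in turn: that each operator $\pi_\lambda(A,t)$ maps $\cA^2_\lambda(\DIV{n})$ into itself and is unitary, that $\pi_\lambda$ respects the group law (i.e. it is a homomorphism), and that it is strongly continuous. First I would confirm that the formula defines an action on functions on $\DIV{n}$: since the action \eqref{eq:SOnSO2Action} sends $z \mapsto tAz$, its inverse sends $z \mapsto \overline{t}A^{-1}z$ (using $\SO(2) \simeq \T$ so that $t^{-1} = \overline{t}$ and $A^{-1} = A^\top$), and pulling back a function by the inverse is the standard way to turn a left action on the domain into a left action on functions. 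The homomorphism property $\pi_\lambda(A_1,t_1)\pi_\lambda(A_2,t_2) = \pi_\lambda(A_1A_2, t_1t_2)$ then follows by a routine unwinding of the composition, so I would state it and indicate the one-line check rather than belabor it.

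The central point is unitarity, and here I would lean directly on the invariance already recorded in the excerpt. Each $(A,t) \in \SO(n)\times\SO(2)$ acts as a complex-linear, hence holomorphic, biholomorphism of $\DIV{n}$; therefore precomposing a holomorphic function with $z \mapsto \overline{t}A^{-1}z$ again yields a holomorphic function, which shows $\pi_\lambda(A,t)$ preserves holomorphicity. To see that it preserves the $L^2(\DIV{n},v_\lambda)$-norm I would invoke the stated fact that $\Delta$ is $\SO(n)\times\SO(2)$-invariant and that the Lebesgue measure $\dif v$ is invariant under this linear (in fact unitary on $\C^n$) action; consequently $v_\lambda$ is invariant. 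A change of variables $w = \overline{t}A^{-1}z$ in the integral $\int_{\DIV{n}} |f(\overline{t}A^{-1}z)|^2 \dif v_\lambda(z)$ then produces $\int_{\DIV{n}} |f(w)|^2 \dif v_\lambda(w)$, with Jacobian factor absorbed precisely because the action preserves $v_\lambda$. This simultaneously shows that $\pi_\lambda(A,t)$ maps $\cA^2_\lambda(\DIV{n})$ isometrically into itself; surjectivity is immediate because $\pi_\lambda(A^{-1},\overline{t})$ is an inverse, so each operator is unitary.

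For strong continuity I would argue that it suffices to check continuity of $(A,t) \mapsto \pi_\lambda(A,t)f$ for $f$ in a dense subset and then use the uniform boundedness (in fact isometry) of the family $\pi_\lambda(A,t)$ to pass to all of $\cA^2_\lambda(\DIV{n})$ by an $\varepsilon/3$ argument. On a dense set one may take, for instance, polynomials or functions that are continuous up to a neighborhood; for such $f$ the map $(A,t) \mapsto f(\overline{t}A^{-1}z)$ converges pointwise and is dominated, so the dominated convergence theorem gives norm-convergence of $\pi_\lambda(A,t)f$ as $(A,t)$ varies continuously. I expect the main obstacle to be stating the change-of-variables step cleanly: one must be careful that the biholomorphism $z \mapsto \overline{t}A^{-1}z$ has a unit-modulus complex Jacobian and that the weight $\Delta(z)^{\lambda-n}$ transforms correctly, both of which reduce to the invariance of $\Delta$ and of $\dif v$ already granted in the excerpt. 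Once that invariance is in hand the unitarity is essentially automatic, and the remaining continuity and homomorphism verifications are standard.
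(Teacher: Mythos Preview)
Your proposal is correct and follows essentially the same approach as the paper: unitarity is obtained from the $\SO(n)\times\SO(2)$-invariance of $\Delta$ and of the Lebesgue measure (hence of $v_\lambda$), together with the holomorphicity of the linear action. The only difference is that for strong continuity you give a direct density/dominated-convergence argument, whereas the paper simply invokes that $\pi_\lambda$ is (up to a character) the restriction of a holomorphic discrete series representation of $\SO_0(n,2)$, for which the continuity is well known.
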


One should compare this representation with the holomorphic discrete series representations of the group $\SO_0(n,2)$ on the Bergman spaces $\cA^2_\lambda(\DIV{n})$. In fact, our representation $\pi_\lambda$ is basically the restriction of an holomorphic discrete series representation. The only difference is that, in our case, we have dropped a factor given by a character in $\SO(2)$ which is independent of $z$. This variation does not change the invariant and irreducible subspaces, but it does simplifies our computations. On the other hand, these remarks imply the well known continuity claimed at the end of Proposition~\ref{prop:pilambda}.

In the rest of this section we will describe some decompositions associated to the representations from Proposition~\ref{prop:pilambda}. Hence, we will recall some basic notation and refer to standard representation theoretic texts for further details. From now on we will only consider unitary representations that are continuous in the strong operator topology.

If $\HH_1$ and $\HH_2$ are two Hilbert spaces that admit unitary representations $\pi_1$ and $\pi_2$, respectively, of a Lie group $G$, then an intertwining map or a $G$-equivariant map is a bounded operator $T : \HH_1 \rightarrow \HH_2$ such that
\[
    T\circ\pi_1(g) = \pi_2(g)\circ T
\]
for every $g \in G$. We say that two such representations are unitary equivalent or isomorphic over $G$ when there exists an intertwining unitary map from $\HH_1$ onto $\HH_2$. On the other hand, in the case $\HH = \HH_1 = \HH_2$ and $\pi_1 = \pi_2$, we denote by $\End_G(\HH)$ the space of intertwining operators $\HH \rightarrow \HH$. It is easily seen that $\End_G(\HH)$ is a von Neumann algebra.

For a unitary representation $\pi$ of a compact group $G$ on a Hilbert space $\HH$, a $G$-invariant subspace is a closed subspace $V \subset \HH$ so that $\pi(g)(V) \subset V$ for every $g \in G$. In this case, $V$ is called irreducible over $G$ (or simply irreducible if the choice of $G$ is clear) when $V$ does not contain a non-trivial $G$-invariant subspace. Then, it is well known (see \cite{BtD}) that $\HH$ is the Hilbert direct sum of irreducible subspaces. Furthermore, one can choose a collection of irreducible representations $\pi_j : G \times V_j \rightarrow V_j$ (necessarily finite dimensional, by the compactness assumption on $G$), for every $j \in A$ ($A$ some index set) mutually non-isomorphic over $G$ so that the following are satisfied.
\begin{enumerate}
  \item The sum of all irreducible subspaces $V \subset \HH$ isomorphic over $G$ to $V_j$ is a closed non-zero subspace $\HH_j$.
  \item There is a Hilbert direct sum decomposition
    \[
        \HH = \bigoplus_{j\in A} \HH_j,
    \]
    which is preserved by $\pi(g)$ for every $g \in G$. This is called the isotypic decomposition of the representation of $\pi$ (or of $G$), and the subspaces are called the isotypic components corresponding to $\pi$ (or to $G$).
\end{enumerate}
With the previous notation, we say that an isotypic decomposition is multiplicity-free when every isotypic component is irreducible.

In the rest of this work, for a subgroup $H$ of $\SO(n)\times\SO(2)$ we will denote by $\pi_\lambda|_H$ the unitary representation of $H$ obtained by restricting $\pi_\lambda$ to $H$. We will now describe the isotypic decomposition of $\pi_\lambda|_H$ for some subgroups $H$ of $\SO(n)\times\SO(2)$.

\subsection{Isotypic decomposition for $\SO(2)$}
We will denote by $\cP(\C^n)$ the space of (holomorphic) polynomials in $\C^n$. In particular, we have $\cP(\C^n) \subset \cA^2_\lambda(\DIV{n})$ for every $\lambda > n-1$, since the measure $v_\lambda$ is finite in this case. We will also denote by $\cP^m(\C^n)$, for every $m \in \N$, the subspace of homogeneous polynomials of degree $m$. In particular, we have the algebraic direct sum
\[
    \cP(\C^n) = \bigoplus_{m=0}^\infty \cP^m(\C^n).
\]

On the other hand, it is well known (see \cite{Upmeier1996}) that the space $\cP(\C^n)$ is dense in $\cA^2_\lambda(\DIV{n})$. In fact, this is a property shared by all weighted Bergman spaces over circled bounded symmetric domains.

We note that for every $p \in \cP^m(\C^n)$ and $t \in \SO(2)$ we have
\[
    (\pi_\lambda(t) p)(z) = t^{-m} p(z) = \chi_{-m}(t) p(z),
\]
where $\chi_{-m}(t) = t^{-m}$ is the character associated to $-m$. Since the irreducible representations of $\SO(2)$ are $1$-dimensional and determined up to isomorphism by their character, it follows that the subspaces $\cP^m(\C^n)$, for $m \in \N$, are the isotypic components of the unitary representation $\pi_\lambda|_{\SO(2)}$. In particular, these subspaces are mutually orthogonal on every weighted Bergman space.

The previous remarks have proved the following result.

\begin{proposition}\label{prop:isoSO(2)}
    For every $\lambda > n-1$, the isotypic decomposition of $\pi_\lambda|_{\SO(2)}$ is given by
    \[
        \cA^2_\lambda(\DIV{n}) = \bigoplus_{m=0}^\infty \cP^m(\C^n).
    \]
    In particular, this isotypic decomposition is not multiplicity-free unless $n = 1$.
\end{proposition}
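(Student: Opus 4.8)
The plan is to exploit three ingredients already assembled in the excerpt: the explicit formula for the action of $\SO(2)$ coming from Proposition~\ref{prop:pilambda}, the density of $\cP(\C^n)$ in $\cA^2_\lambda(\DIV{n})$, and the fact that the irreducible unitary representations of $\SO(2)\simeq\T$ are exactly the one-dimensional characters $\chi_k$, $k\in\Z$. The strategy is to identify each homogeneous summand $\cP^m(\C^n)$ with the full $\chi_{-m}$-isotypic component, and then upgrade the algebraic direct sum $\cP(\C^n)=\bigoplus_m \cP^m(\C^n)$ to a Hilbert direct sum using orthogonality together with density.

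First I would restrict the action of Proposition~\ref{prop:pilambda} to $\SO(2)$ by setting $A=I_n$, obtaining $(\pi_\lambda(t)f)(z)=f(\overline{t}z)$ for $t\in\T$. Evaluating on a homogeneous polynomial $p\in\cP^m(\C^n)$ and using $|t|=1$ gives $(\pi_\lambda(t)p)(z)=\overline{t}^{\,m}p(z)=t^{-m}p(z)=\chi_{-m}(t)\,p(z)$, so every vector of $\cP^m(\C^n)$ is a $\chi_{-m}$-eigenvector; hence $\cP^m(\C^n)$ is a $\pi_\lambda|_{\SO(2)}$-invariant subspace on which $\SO(2)$ acts as a multiple of the single character $\chi_{-m}$. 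The subspaces are mutually orthogonal by a direct unitarity argument: for $p\in\cP^m(\C^n)$ and $q\in\cP^{m'}(\C^n)$ with $m\neq m'$, unitarity of $\pi_\lambda(t)$ yields $\langle p,q\rangle=t^{m'-m}\langle p,q\rangle$ for all $t\in\T$, and choosing $t$ with $t^{m'-m}\neq 1$ forces $\langle p,q\rangle=0$.

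Next I would promote the algebraic decomposition to the stated Hilbert direct sum. Each $\cP^m(\C^n)$ is finite-dimensional, hence closed, and the pieces are mutually orthogonal, so the orthogonal sum $\bigoplus_m \cP^m(\C^n)$ is a well-defined closed subspace whose closure equals the closure of the algebraic span $\cP(\C^n)$. By the cited density of $\cP(\C^n)$ in $\cA^2_\lambda(\DIV{n})$, this closure is the whole Bergman space, giving the displayed decomposition. Because the pieces exhaust $\cA^2_\lambda(\DIV{n})$ and transform under pairwise distinct characters (the map $m\mapsto -m$ is injective on $\N$), each $\cP^m(\C^n)$ is precisely the entire $\chi_{-m}$-isotypic component rather than a proper subspace of it, which is exactly the assertion that these are the isotypic components of $\pi_\lambda|_{\SO(2)}$.

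Finally, for the multiplicity-free statement I would compare dimensions. The decomposition is multiplicity-free precisely when every isotypic component is irreducible, and since the irreducibles of $\SO(2)$ are one-dimensional this means $\dim\cP^m(\C^n)=1$ for all $m$. As $\dim\cP^m(\C^n)=\binom{n+m-1}{m}$, this holds for every $m$ exactly when $n=1$; already for $n\geq 2$ the component $\cP^1(\C^n)$ has dimension $n\geq 2$ and splits into several copies of $\chi_{-1}$, so the decomposition fails to be multiplicity-free. This settles the exceptional value $n=1$. The argument carries no serious obstacle; the only point requiring genuine care is the passage from the algebraic to the Hilbert direct sum, where one must invoke both mutual orthogonality and density to guarantee that the $\cP^m(\C^n)$ are complete isotypic components.
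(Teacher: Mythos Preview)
Your proof is correct and follows essentially the same approach as the paper: compute the $\SO(2)$-action on $\cP^m(\C^n)$ via the character $\chi_{-m}$, invoke density of polynomials to obtain the Hilbert direct sum, and conclude that these are the isotypic components since the characters are pairwise distinct. You supply more detail than the paper does---in particular the explicit orthogonality argument via unitarity and the dimension count $\dim\cP^m(\C^n)=\binom{n+m-1}{m}$ for the multiplicity-free claim---but the underlying strategy is identical.
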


\subsection{Isotypic decomposition for $\SO(n)\times\SO(2)$}
\label{subsec:isotypicSOnSO2}
From the definition of $\pi_\lambda$ it is clear that, for every $A \in \SO(n)$, the operator $\pi_\lambda(A,1)$ intertwines the representation $\pi_\lambda|_{\SO(2)}$. In symbols, we can write
\[
    \pi_\lambda(\SO(n)) \subset \End_{\SO(2)}(\cA^2_\lambda(\DIV{n})).
\]
Hence, a straightforward application of Schur's Lemma implies that the unitary representation $\pi_\lambda|_{\SO(n)}$ preserves the Hilbert direct sum
\[
    \cA^2_\lambda(\DIV{n}) = \bigoplus_{m=0}^\infty \cP^m(\C^n).
\]
A more elementary proof of this claim is obtained by recalling that $\SO(n)$ acts linearly on $\DIV{n}$, and so its action on Bergman spaces preserves polynomials as well as their degree when they are homogeneous. At any rate, we conclude that the isotypic decomposition of $\pi_\lambda|_{\SO(n)\times\SO(2)}$ is given by a Hilbert direct sum that refines the one given above.

In order to describe the isotypic decomposition for $\SO(n)\times\SO(2)$ we will recall the basic theory of irreducible representations through the use of weights. We refer to \cite{HelgasonDGLSS,KnappBeyond} for further details. 

We start by describing some of the fundamentals of roots and root spaces. If $\U$ is a compact semisimple Lie group with Lie algebra $\fu$, and $\mathrm{T}$ is a maximal torus of $\U$ with Lie subalgebra $\ft_0$, then we can complexify the objects involved to obtain the so called roots and root spaces. More precisely, let $\fg = \fu^\C = \fu \oplus i\fu$ be the complexification of $\fu$, where $i$ here represents the complex structure associated to such complexification. Then, $\ft = \ft_0 \oplus i \ft_0$ is a Cartan subalgebra of $\fg$. For every $\alpha \in \ft^*$, we denote
\[
	\fg_\alpha = \{X \in \fg \mid [H,X] = \alpha(H)X, \text{ for every } H \in \ft\}.
\]
A linear functional $\alpha$ is called a root if both $\alpha$ and $\fg_\alpha$ are non-zero. In this case, the space $\fg_\alpha$ is called a root space. It is a well known fact that every root is real-valued in the subspace $\ft_\R = i\ft_0$, and so we identify the set of roots with their restrictions to $\ft_\R$. In other words, we consider the roots as elements of $\ft_\R^*$. The next step is to introduce an order, for which one usually considers the lexicographic order with respect to some basis. Once this is given, a root is called simple if it is not the sum of two positive roots.

Let us specialize the above information to the case of $\SO(n)$. The first object to consider is a maximal torus of the group $\SO(n)$. For this, we denote for $\vartheta \in \R$ the matrix
\begin{equation}\label{eq:Rvartheta}
    R(\vartheta) =
    \begin{pmatrix}
      \cos(\vartheta) & \sin(\vartheta) \\
      -\sin(\vartheta) & \cos(\vartheta)
    \end{pmatrix}.
\end{equation}
Then, we let $\mrT{n}$ be the subgroup of $\SO(n)$ consisting of the block diagonal matrices with size $n\times n$ of the form
\begin{equation}\label{eq:Atheta}
    A(\theta) =
    \mathrm{diag}(R(\theta_1), \dots, R(\theta_j), \dots)
\end{equation}
with $\theta \in \R^{\lfloor n/2 \rfloor}$, and where the diagonal blocks have size $2 \times 2$, except in the case of $n$ odd where there is a trailing $1$ in the diagonal position $(n,n)$. Here we are using the notation $\lfloor x \rfloor$ for the integer part of a real number $x$.
Then, it is well known that $\mrT{n}$ is a maximal torus of $\SO(n)$, and it is clear that $\dim \mrT{n} = \lfloor n/2 \rfloor$. The Lie algebra of $\mrT{n}$ will be denoted by $\ft_0$. It follows that $\mrT{n}\times \SO(2)$ is a maximal torus of $\SO(n)\times \SO(2)$ with Lie algebra $\ft_0 \times \so(2)$. For the matrix
\begin{equation}\label{eq:Rpi2}
	R(\pi/2) =
	\begin{pmatrix}
		0 & 1 \\
		-1 & 0
	\end{pmatrix},
\end{equation}
the Lie algebra $\ft_0$ consists of matrices with the form
\begin{equation}\label{eq:ft0}
	\mathrm{diag}(b_1R(\pi/2), \dots, b_jR(\pi/2), \dots)
\end{equation}
with $b \in \R^{\lfloor n/2 \rfloor}$, and where, similarly to the case of $\mrT{n}$, the diagonal blocks have size $2\times 2$, except in the case of $n$ odd where there is a trailing $0$ in the diagonal position~$(n,n)$.

The complexification of $\so(n)$ is the Lie algebra $\so(n,\C)$ of anti-symmetric complex $n\times n$ matrices. For these choices we obtain by complexification the Cartan subalgebra $\ft = \ft_0 \oplus i \ft_0$ of $\so(n,\C)$, whose elements are of the form
\[
	H(b) =
	\mathrm{diag}(ib_1R(\pi/2), \dots, ib_jR(\pi/2), \dots)
\]
with $b \in \C^{\lfloor n/2 \rfloor}$ and the same sort of restrictions imposed above. With this notation, we define for every $j = 1, \dots, \lfloor n/2 \rfloor$ the linear functional
\begin{align*}
  e_j : \ft &\rightarrow \C \\
  e_j(H(b)) &= b_j.
\end{align*}
Note that these yield a basis for $\ft^*$ and are real-valued on $\ft_\R$. It is well known that the set of roots of $\so(n,\C)$ for this choice of Cartan subalgebra is given in terms of this basis as follows.
\begin{itemize}
  \item If $n = 2\ell$, then the set of roots consists of the linear functionals $e_j - e_k$ ($1 \leq j\not=k \leq \ell$) and $\pm(e_j + e_k)$ ($1 \leq j < k \leq \ell$).
  \item If $n = 2\ell+1$, then the set of roots consists of the linear functionals $\pm e_j$ ($1 \leq j \leq \ell$),  $e_j - e_k$ ($1 \leq j\not=k \leq \ell$) and $\pm(e_j + e_k)$ ($1 \leq j < k \leq \ell$).
\end{itemize}
On the other hand, with respect to the lexicographic order induced by the basis given by the functionals $e_j$, the sets of positive and of simple roots are given as follows.
\begin{itemize}
  \item If $n = 2\ell$, then the set of positive roots consists of the linear functionals $e_j \pm e_k$ ($1 \leq j < k \leq \ell$). While the set of simple roots consists of the linear functionals $e_1 - e_2, \dots, e_{\ell -1} - e_\ell, e_{\ell -1} + e_\ell$.
  \item If $n = 2\ell + 1$, then the set of positive roots consists of the linear functionals $e_j \pm e_k$ ($1 \leq j < k \leq \ell$) and $e_j$ ($1 \leq j \leq \ell$). While the set of simple roots consists of the linear functionals $e_1 - e_2, \dots, e_{\ell -1} - e_\ell, e_\ell$.
\end{itemize}

Similar constructions can be applied to irreducible representations thus yielding the notion of weights. We will sketch some facts about the theory of weights for our case, and refer to the standard literature for further details (see \cite{GW,KnappBeyond}).

Let $\pi : \SO(n) \rightarrow \GL(V)$ be an irreducible representation, where $V$ is a complex vector space and $\GL(V)$ is the group of complex linear isomorphisms of $V$. We recall that the compactness of $\SO(n)$ and the irreducibility of $V$ imply the finite dimensionality of $V$, even if we assume that $V$ is Hilbert from the start.
Differentiation yields a representation $\rho = \dif\pi : \so(n) \rightarrow \gl(V)$, in other words an homomorphism of Lie algebras. By complexifying, we obtain an induced representation $\rho^\C : \so(n,\C) \rightarrow \gl(V)$. A linear functional $\lambda \in \ft^*$ is called a weight if the subspace
\[
    V_\lambda = \{ v \in V \mid \rho^\C(X)v = \lambda(X) v, \text{ for every } X \in \ft \}
\]
is non-zero, in which case $V_\lambda$ is called a weight space, both corresponding to the representation on $V$ given by either $\pi$ or $\rho^\C$. Let us denote by $\Phi_V$ the set of weights for the irreducible module $V$. It is very well known that the elements of $\Phi_V$ restricted to $\ft_\R$ are real-valued and, as with the roots, we will denote such restrictions with the same symbols. In particular, $\Phi_V$ is considered as a subset of $\ft_\R^*$ and so we can use the order of the latter. More precisely, for any two $\mu_1, \mu_2 \in \Phi_V$ we consider the order given by 
\[
	\mu_1 \prec \mu_2 \Longleftrightarrow \mu_2 - \mu_1 = \alpha_1 + \dots + \alpha_k
\]
for some $\alpha_1, \dots, \alpha_k$ positive roots. It follows from representation theory that any (finite-dimensional) irreducible representation $V$ as above admits a unique highest weight and any two such representations are isomorphic, as $\SO(n)$-modules, if and only if their highest weights are the same. Furthermore, the highest weight space is $1$-dimensional and any (non-zero) of its elements is called a highest weight vector, In particular, a highest weight vector for $V$ is well defined up to a constant.

The claims stated above come from what is known as the Highest Weight Theorem, which also provides a complete description of all elements of $\ft_\R^*$ that are highest weights for some irreducible representation (see \cite{KnappBeyond}).

We will use the previous notation and facts to describe the isotypic components for the representation $\pi_\lambda|_{\SO(n)\times\SO(2)}$ on $\cP^m(\C^n)$. The main object used to achieve this is the space of harmonic polynomials that we now introduce. For the rest of this work, it is useful to keep in mind that $z^\top z = z_1^2 + \dots + z_n^2$, for every $z \in \C^n$. This notation will simplify some of our formulas.

\begin{definition}\label{def:harmonic}
    Let us define by
    \[
        \partial(z^\top z) = \sum_{j=1}^{n} \frac{\partial^2}{\partial z_j^2}
    \]
    the complex holomorphic Laplacian on $\C^n$. A polynomial $p(z) \in \cP(\C^n)$ is called harmonic if it satisfies $\partial(z^\top z) p(z) = 0$. The space of all harmonic polynomials is denoted by $\HH(\C^n)$.
\end{definition}

In other words, $\HH(\C^n)$ is the kernel of the linear map $\partial(z^\top z) : \cP(\C^n) \rightarrow \cP(\C^n)$. For such map we clearly have $\partial(z^\top z)(\cP^m(\C^n)) \subset \cP^{m-2}(\C^n)$, and so it follows that if we denote by
\[
    \HH^m(\C^n) = \HH(\C^n) \cap \cP^m(\C^n),
\]
the space of harmonic polynomials homogeneous of degree $m$, then we have an algebraic direct sum
\[
    \HH(\C^n) = \bigoplus_{m=0}^{\infty} \HH^m(\C^n).
\]
The next result is a consequence of the theory found in \cite{GW,KnappBeyond}. We provide a sketch of the proof whose details can be completed easily from basic representation theory. 

\begin{proposition}\label{prop:harmonic-irred}
	Let $n \geq 3$, be given. Then, for every $m \in \N$, the subspace $\HH^m(\C^n)$ is an irreducible $\SO(n)$-module with highest weight $me_1$ and highest weight vector $(z_1 - i z_2)^m$.
\end{proposition}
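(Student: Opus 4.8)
The plan is to realize $\HH^m(\C^n)$ as the irreducible $\SO(n)$-module with highest weight $me_1$, proceeding in four steps. First, I would check that $\HH^m(\C^n)$ is an $\SO(n)$-submodule of $\cP^m(\C^n)$. Since every $A \in \SO(n)$ preserves the form $z^\top z = z_1^2 + \dots + z_n^2$, the induced action $(A\cdot p)(z) = p(A^{-1}z)$ on $\cP(\C^n)$ commutes with the holomorphic Laplacian $\partial(z^\top z)$; hence its kernel $\HH(\C^n)$ is invariant, and so is its intersection $\HH^m(\C^n)$ with the (clearly invariant) space $\cP^m(\C^n)$. I would then verify that $(z_1 - iz_2)^m$ lies in $\HH^m(\C^n)$: it is homogeneous of degree $m$, and in $\partial(z^\top z)(z_1-iz_2)^m$ only the terms $\partial^2/\partial z_1^2$ and $\partial^2/\partial z_2^2$ contribute, producing $m(m-1)(z_1-iz_2)^{m-2}$ and $m(m-1)i^2(z_1-iz_2)^{m-2}$, which cancel.

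Second, I would diagonalize the Cartan action. Setting $u_j = z_{2j-1} - iz_{2j}$ and $v_j = z_{2j-1} + iz_{2j}$ (with $z_n$ left over when $n$ is odd), a direct computation of the differentiated representation $\rho^\C$ shows that $H(e_k)$ acts on the linear form $u_j$ with eigenvalue $\delta_{jk}$ and on $v_j$ with eigenvalue $-\delta_{jk}$, so that $u_j$ has weight $e_j$ and $v_j$ has weight $-e_j$, while the trailing $z_n$ has weight $0$. Since $\rho^\C(X)$ acts as a derivation, every monomial in the $u_j, v_j, z_n$ is a weight vector; in particular $(z_1 - iz_2)^m = u_1^m$ has weight $me_1$.

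Third, I would argue that $u_1^m$ is a highest weight vector. A monomial of degree $m$ in the $u_j, v_j, z_n$ has a weight $\mu = \sum_j k_j e_j$ with $\sum_j |k_j| \le m$, so every weight of $\cP^m(\C^n)$ obeys this bound. Adding to $me_1$ any positive root of the list recorded above ($e_i - e_k$, $e_i + e_k$, or the short root $e_i$) yields a functional whose coefficients have absolute-value sum $m+2$ (respectively $m+1$ for the short roots in the odd case), which is therefore not a weight of $\cP^m(\C^n)$. As the positive root spaces preserve the degree $m$ and send $V_\mu$ into $V_{\mu+\alpha}$, they must annihilate $u_1^m$; hence $u_1^m$ is a highest weight vector of weight $me_1$, and the irreducible module $V(me_1)$ embeds into $\HH^m(\C^n)$.

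Finally, irreducibility would follow from a dimension count, which I expect to be the only delicate point. Using the classical surjectivity of $\partial(z^\top z) : \cP^m(\C^n) \to \cP^{m-2}(\C^n)$, coming from the $\fsl_2$-triple formed by multiplication by $z^\top z$ and the operator $\partial(z^\top z)$, one gets $\dim \HH^m(\C^n) = \binom{n+m-1}{n-1} - \binom{n+m-3}{n-1}$. A computation with the Weyl dimension formula gives the same value for $\dim V(me_1)$. Since $V(me_1) \subseteq \HH^m(\C^n)$ and the two dimensions agree, the spaces coincide, and $\HH^m(\C^n)$ is irreducible with highest weight $me_1$ and highest weight vector $(z_1 - iz_2)^m$, as claimed. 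The main obstacle is precisely this last step: establishing irreducibility cleanly requires either the dimension comparison above or, alternatively, a direct verification that $u_1^m$ generates $\HH^m(\C^n)$ as an $\SO(n)$-module.
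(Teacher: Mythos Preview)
Your argument is correct and more self-contained than the paper's. The paper gives only a sketch: it cites Theorem~5.6.11 of Goodman--Wallach for the irreducibility of $\HH^m(\C^n)$ over $\SO(n,\C)$ (whence over $\SO(n)$ by the unitary trick), refers to page~277 of Knapp for the computation that $(z_1 - iz_2)^m$ has weight $me_1$, checks directly that it is harmonic, and finally invokes Problem~2 on page~339 of Knapp for the fact that $me_1$ is the highest weight. Your route replaces these citations by explicit arguments: the weight-bound $\sum_j |k_j| \le m$ on $\cP^m(\C^n)$ gives the highest-weight property for $u_1^m$, and the dimension comparison between $\dim \HH^m(\C^n) = \binom{n+m-1}{n-1} - \binom{n+m-3}{n-1}$ (from surjectivity of the Laplacian) and $\dim V(me_1)$ via Weyl's formula yields irreducibility. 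The paper's approach is shorter but opaque without the references at hand; yours is longer but stands on its own and makes visible exactly where the $\fsl_2$-triple and the Weyl formula enter.
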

\begin{proof}
    Theorem 5.6.11 from \cite{GW} states the required irreducibility for $\SO(n,\C)$. Hence, a standard unitary trick argument shows that $\HH^m(\C^n)$ is an irreducible $\SO(n)$-module.

    On the other hand, the (straightforward) computations used in the example in page 277 from \cite{KnappBeyond} show that $(z_1 - i z_2)^m$ is a weight vector for the $\SO(n)$-module $\cP^m(\C^n)$ with weight $me_1$. It is also straightforward to check that $(z_1 - i z_2)^m \in \HH^m(\C^n)$, and so $me_1$ and $(z_1 - i z_2)^m$ are a weight and a weight vector, respectively, for $\HH^m(\C^n)$. Finally, Problem 2 in page 339 from \cite{KnappBeyond} proves that the weight $me_1$ is the highest weight of $\HH^m(\C^n)$. We note that this same Problem 2 also shows the irreducibility of $\HH^m(\C^n)$ over $\SO(n)$.
\end{proof}

We proceed to describe the main isotypic decomposition of this subsection. We recall that if $G\times H$ is a product of two compact groups and $V$, $W$ are modules over $G$ and $H$, respectively, then $V\otimes W$ is a $G\times H$-module in a natural way (see~\cite{BtD}). This is called the outer tensor product of the corresponding modules.

\begin{proposition}\label{prop:isoSO(n)SO(2)}
    For every $\lambda > n-1$, the isytopic decomposition of $\pi_\lambda|_{\SO(n)\times\SO(2)}$ is given by
    \[
        \cA^2_\lambda(\DIV{n})
            = \bigoplus_{m=0}^{\infty}
                \bigoplus_{\substack{k_1, k_2 \in \N \\ k_1 + 2k_2 = m}} \HH^{k_1}(\C^n) (z^\top z)^{k_2}.
    \]
    Furthermore, this decomposition satisfies the following properties
    \begin{enumerate}
      \item For every $k_1, k_2 \in \N$, the subspace $\HH^{k_1}(\C^n) (z^\top z)^{k_2}$ is an irreducible $\SO(n)$-module isomorphic to $\HH^{k_1}(\C^n)$, and so its highest weight is $k_1 e_1$. A highest weight vector is given by $(z_1 - iz_2)^{k_1} (z^\top z)^{k_2}$.
      \item For every $k_1, k_2 \in \N$, the subspace $\HH^{k_1}(\C^n) (z^\top z)^{k_2}$ is an irreducible module over $\SO(n)\times \SO(2)$ isomorphic to the outer tensor product $\HH^{k_1}(\C^n) \otimes \C_{k_1 + 2k_2}$, where $\C_{k_1 + 2k_2}$ is the $1$-dimensional $\SO(2)$-module defined by the character $\chi_{-(k_1+2k_2)}(t) = t^{-(k_1+2k_2)}$.
      \item The decomposition is multiplicity-free.
    \end{enumerate}
\end{proposition}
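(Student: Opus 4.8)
The plan is to build the decomposition in two stages, first splitting off the $\SO(2)$-action and then refining each piece under $\SO(n)$. By Proposition~\ref{prop:isoSO(2)} we already have the Hilbert direct sum $\cA^2_\lambda(\DIV{n}) = \bigoplus_{m=0}^\infty \cP^m(\C^n)$, and the discussion preceding the statement shows that each $\cP^m(\C^n)$ is $\SO(n)$-invariant. Since $\SO(n)$ and $\SO(2)$ commute and $\SO(2)$ acts on all of $\cP^m(\C^n)$ by the single character $\chi_{-m}$, the problem reduces to decomposing each finite-dimensional space $\cP^m(\C^n)$ as an $\SO(n)$-module. The key input is the classical harmonic (Fischer) decomposition of homogeneous polynomials under the orthogonal group, namely the splitting $\cP^m(\C^n) = \HH^m(\C^n) \oplus (z^\top z)\cP^{m-2}(\C^n)$, which upon iteration yields $\cP^m(\C^n) = \bigoplus_{k_1+2k_2=m}\HH^{k_1}(\C^n)(z^\top z)^{k_2}$.

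For this decomposition I would invoke the standard theory (see \cite{GW,KnappBeyond}): the directness and completeness of the splitting follow from the nondegeneracy of the Fischer pairing, with respect to which multiplication by $z^\top z$ is, up to a positive constant, the adjoint of $\partial(z^\top z)$, so that $\cP^m(\C^n)$ is the orthogonal sum of $\ker\partial(z^\top z) = \HH^m(\C^n)$ and the image of multiplication by $z^\top z$. Granting this, property~(1) is immediate: since $A\in\SO(n)$ satisfies $(Az)^\top(Az) = z^\top z$, the polynomial $(z^\top z)^{k_2}$ is $\SO(n)$-invariant, and hence multiplication by it intertwines the $\SO(n)$-action and is an $\SO(n)$-equivariant linear isomorphism $\HH^{k_1}(\C^n)\to\HH^{k_1}(\C^n)(z^\top z)^{k_2}$. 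By Proposition~\ref{prop:harmonic-irred} the domain is irreducible with highest weight $k_1e_1$ and highest weight vector $(z_1-iz_2)^{k_1}$, so the image is irreducible with the same highest weight and with highest weight vector $(z_1-iz_2)^{k_1}(z^\top z)^{k_2}$.

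For property~(2) I would note that, as $\SO(n)$ and $\SO(2)$ act independently, the subspace $\HH^{k_1}(\C^n)(z^\top z)^{k_2}$ carries the outer tensor product of the $\SO(n)$-action on $\HH^{k_1}(\C^n)$ and the $\SO(2)$-action; the latter is the character $\chi_{-(k_1+2k_2)}$ because the subspace lies in $\cP^{m}(\C^n)$ with $m = k_1+2k_2$, where $\pi_\lambda(t)$ acts by $\chi_{-m}(t)=t^{-m}$. Thus the module is $\HH^{k_1}(\C^n)\otimes\C_{k_1+2k_2}$, and it is irreducible over $\SO(n)\times\SO(2)$ because the outer tensor product of irreducible modules of compact groups is irreducible (see \cite{BtD}), $\HH^{k_1}(\C^n)$ being $\SO(n)$-irreducible and $\C_{k_1+2k_2}$ being $1$-dimensional.

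Finally, for property~(3) and to assemble the global Hilbert direct sum, I would argue that the summands indexed by distinct pairs $(k_1,k_2)$ are mutually non-isomorphic over $\SO(n)\times\SO(2)$: the $\SO(n)$-highest weight $k_1e_1$ recovers $k_1$ and the $\SO(2)$-character recovers $k_1+2k_2$, so agreement of both forces $(k_1,k_2)=(k_1',k_2')$. Consequently each summand is already a full isotypic component, which is exactly the multiplicity-free assertion; moreover, since distinct isotypic components of the unitary representation $\pi_\lambda|_{\SO(n)\times\SO(2)}$ are automatically orthogonal, the internal sums within each $\cP^m(\C^n)$ are Hilbert direct sums, and combining with the orthogonality across different $m$ from Proposition~\ref{prop:isoSO(2)} gives the stated Hilbert direct sum. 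The main obstacle is purely the harmonic decomposition of $\cP^m(\C^n)$; once that classical separation-of-variables result is in hand, everything else is bookkeeping with highest weights and characters.
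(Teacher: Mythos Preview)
Your proposal is correct and follows essentially the same route as the paper: both rely on the classical harmonic decomposition $\cP^m(\C^n)=\bigoplus_{k_1+2k_2=m}\HH^{k_1}(\C^n)(z^\top z)^{k_2}$ (the paper cites Problem~11 on p.~271 of \cite{KnappBeyond}, you invoke the Fischer pairing), then use the $\SO(n)$-invariance of $(z^\top z)^{k_2}$ for (1), the outer tensor product identification for (2), and the recovery of $(k_1,k_2)$ from the highest weight and character for (3). The only cosmetic difference is the order in which the Hilbert sum is assembled; the content is the same.
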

\begin{proof}
    First we note that, by the very definition of both $\SO(n)$ and $z^\top z$, every polynomial of the form $(z^\top z)^m$ is $\SO(n)$-invariant for every $m \in \N$. Hence, for every $k_1, k_2 \in \N$, the assignment
    \begin{align*}
      \HH^{k_1}(\C^n) &\rightarrow \HH^{k_1}(\C^n) (z^\top z)^{k_2} \\
      p(z) &\mapsto p(z) (z^\top z)^{k_2}
    \end{align*}
    clearly defines a linear map which is $\SO(n)$-equivariant for the representation $\pi_\lambda|_{\SO(n)}$ on both spaces. Since this is clearly an isomorphism of vector spaces we conclude that (1) holds.

    Consider the assignment
    \begin{align*}
      \HH^{k_1}(\C^n) (z^\top z)^{k_2} &\rightarrow \HH^{k_1}(\C^n) \otimes \C_{k_1 + 2k_2}  \\
      p(z) (z^\top z)^{k_2} &\mapsto p(z) \otimes 1.
    \end{align*}
    which is clearly a linear isomorphism. Since $(z^\top z)^{k_2}$ is $\SO(n)$-invariant, it follows that this linear map is an isomorphism of $\SO(n)$-modules. On other hand, for every $p(z) \in \HH^{k_1}(\C^n)$ and  $t \in \SO(2)$ we have
    \begin{align*}
      \pi_\lambda(I,t) (p(z)(z^\top z)^{k_2})
                &= t^{-(k_1 + 2k_2)} p(z)(z^\top z)^{k_2} \\
      (I,t)\cdot p(z) \otimes 1 &= p(z) \otimes t^{-(k_1+2k_2)} =
                t^{-(k_1+2k_2)}(p(z)\otimes 1),
    \end{align*}
    and so the last assignment is $\SO(2)$-equivariant as well. We conclude that such assignment is an isomorphism of $\SO(n)\times\SO(2)$-modules. This proves (2).

    By Problem 11 in page 271 from \cite{KnappBeyond} we have a direct sum decomposition of vector spaces
    \[
        \cP^m(\C^n) = \bigoplus_{k_1 + 2k_2 = m} \HH^{k_1}(\C^n) (z^\top z)^{k_2}
    \]
    for every $m \in \N$. Furthermore, by (2) this is a direct sum of mutually non-isomorphic irreducible $\SO(n)\times\SO(2)$-modules, and so the direct sum is orthogonal for the inner product of any Bergman space $\cA^2_\lambda(\DIV{n})$, for $\lambda > n-1$. By the density of the polynomials in Bergman spaces, we have the required Hilbert direct sum decomposition
    \[
        \cA^2_\lambda(\DIV{n})
            = \bigoplus_{m=0}^{\infty}
                \bigoplus_{\substack{k_1, k_2 \in \N \\ k_1 + 2k_2 = m}} 
                		\HH^{k_1}(\C^n) (z^\top z)^{k_2}.
    \]
    Finally, it follows from (2) that any two terms of such sum are mutually non-isomorphic over $\SO(n)\times\SO(2)$. This shows (3) and completes the proof.
\end{proof}

We remark that the information about highest weight vectors given above can also be found in \cite{Johnson}.

\subsection{Isotypic decomposition for $\SO(n-1)\times \SO(2)$.}
Let us consider the canonical upper left corner embedding of $\SO(n-1)$ as a subgroup of $\SO(n)$. In particular, the group $\SO(n-1) \times \SO(2)$ has a unitary representation on every Bergman space $\cA^2_\lambda(\DIV{n})$ by restriction of $\pi_\lambda$. The Hilbert direct sum decomposition of $\cA^2_\lambda(\DIV{n})$ from Proposition~\ref{prop:isoSO(n)SO(2)} is $\SO(n)\times\SO(2)$-invariant and so it is $\SO(n-1)\times\SO(2)$-invariant as well. However, the terms of such decomposition may no longer be irreducible when considered as $\SO(n-1) \times \SO(2)$-modules, but they will rather have a direct sum decomposition into irreducible $\SO(n-1) \times \SO(2)$-submodules.

For the subgroup $\SO(n-1) \subset \SO(n)$ it is well known how to decompose every irreducible $\SO(n)$-module as a direct sum of irreducible $\SO(n-1)$-submodules. Statements that provide such decompositions are known as branching rules, and they are often given by describing the submodules in terms of highest weights. For our case, the most general classical result is the Branching Theorem of Murnaghan (see Theorem 9.16 from \cite{KnappBeyond}). A particular case of such branching theorem together with our Proposition~\ref{prop:harmonic-irred} yields the next result.

\begin{proposition}\label{prop:branching}
	Let $n \geq 4$ be given. Then, for every $m \in \N$, the subspace $\HH^m(\C^n)$ considered as a module over $\SO(n-1)$ satisfies
	\[
		\HH^m(\C^n) \simeq \bigoplus_{j=0}^m \HH^j(\C^{n-1})
	\]
	where each space $\HH^j(\C^{n-1})$ is considered as a $\SO(n-1)$-module and the isomorphism holds over $\SO(n-1)$. Hence, this decomposition is multiplicity-free.
\end{proposition}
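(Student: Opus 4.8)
The plan is to combine the irreducibility already established in Proposition~\ref{prop:harmonic-irred} with the classical branching rule for the chain $\SO(n) \supset \SO(n-1)$, i.e.\ precisely the Murnaghan branching theorem (Theorem~9.16 of \cite{KnappBeyond}) cited in the statement. First I would recall that, by Proposition~\ref{prop:harmonic-irred}, $\HH^m(\C^n)$ is an irreducible $\SO(n)$-module with highest weight $me_1 = (m,0,\dots,0)$ in the coordinates $e_1, \dots, e_{\lfloor n/2 \rfloor}$ fixed in subsection~\ref{subsec:isotypicSOnSO2}. In this way the whole problem reduces to reading off the highest weights of the irreducible $\SO(n-1)$-constituents of this single irreducible module, and then matching them against the harmonic spaces over $\C^{n-1}$.

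Next I would feed the weight $me_1$ into the branching theorem. Here one must treat separately the two parities of $n$, since the interlacing inequalities differ for the chains $\SO(2\ell+1) \downarrow \SO(2\ell)$ and $\SO(2\ell) \downarrow \SO(2\ell-1)$. In both cases the highest weights $\mu = (\mu_1, \mu_2, \dots)$ of the $\SO(n-1)$-summands are exactly those interlacing with $\lambda = (m, 0, \dots, 0)$. Since every coordinate of $\lambda$ beyond the first vanishes, the interlacing forces inequalities of the shape $0 \geq \mu_k \geq 0$ for all $k \geq 2$, so that $\mu_2 = \mu_3 = \cdots = 0$, while the first coordinate is only constrained by $m \geq \mu_1 \geq 0$. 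Because the last coordinate of $\lambda$ is zero, no chiral or half-integer subtlety arises and all weights stay integral. Consequently the constituents are indexed precisely by $\mu = je_1$ with $0 \leq j \leq m$, and the branching theorem for this chain produces each of them with multiplicity one.

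Finally I would identify the summands. Applying Proposition~\ref{prop:harmonic-irred} this time to $\SO(n-1)$ acting on $\C^{n-1}$, the irreducible $\SO(n-1)$-module with highest weight $je_1$ is exactly $\HH^j(\C^{n-1})$. Substituting these identifications yields $\HH^m(\C^n) \simeq \bigoplus_{j=0}^m \HH^j(\C^{n-1})$ as $\SO(n-1)$-modules, and the decomposition is multiplicity-free because the indices $j$ are pairwise distinct. It is worth noting that the hypothesis $n \geq 4$ is used precisely at this last step: it guarantees $n-1 \geq 3$, which is the range in which Proposition~\ref{prop:harmonic-irred} may legitimately be invoked for $\SO(n-1)$.

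The hard part is organizational rather than conceptual: one has to write the interlacing conditions in their correct form for each parity of $n$ and verify that the very special shape $(m,0,\dots,0)$ of the highest weight collapses all but the first coordinate of $\mu$. Once this collapse is recorded, the remaining steps are immediate from the two applications of Proposition~\ref{prop:harmonic-irred}.
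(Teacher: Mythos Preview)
Your proposal is correct and follows essentially the same approach as the paper: the paper simply states that the result is a particular case of Murnaghan's branching theorem (Theorem~9.16 of \cite{KnappBeyond}) combined with Proposition~\ref{prop:harmonic-irred}, and you have spelled out exactly how that combination works, including the interlacing computation and the role of the hypothesis $n \geq 4$.
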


We can now use Propositions \ref{prop:isoSO(n)SO(2)} and \ref{prop:branching} to obtain the isotypic decomposition for the group $\SO(n-1)\times \SO(2)$. In what follows we will denote by $\lfloor m\rfloor_2$ the parity of $m \in \N$, where the latter is defined as either $0$ or $1$ according to whether $m$ is even or odd, respectively. We will also use the expression $k V$ to denote the direct sum of $k$ copies of $V$, where $k \in \N$ and $V$ is an irreducible module over some compact group. As before, $\C_m$ denotes the $1$-dimensional $\SO(2)$-module corresponding to the character $\chi_{-m}(t) = t^{-m}$.

\begin{proposition}\label{prop:isoSO(n-1)SO(2)}
	Let $n \geq 4$ be given. Then, for every $\lambda > n-1$, the isotypic decomposition of $\pi_\lambda|_{\SO(n-1)\times\SO(2)}$ is given by the following isomorphism of modules over $\SO(n-1) \times \SO(2)$.
	\[
		\cA^2_\lambda(\DIV{n}) \simeq 
			\bigoplus_{m=0}^\infty \bigoplus_{r=0}^m
				\bigg(\bigg\lfloor \frac{m}{2} \bigg\rfloor - 
						\bigg\lfloor  
						\frac{r + \lfloor m+1 \rfloor_2}{2}
						\bigg\rfloor + 1
				\bigg)
					\HH^r(\C^{n-1}) \otimes \C_m.
	\]
	In particular, this decomposition is not multiplicity-free.
\end{proposition}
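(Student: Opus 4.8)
The plan is to deduce the decomposition by restricting the $\SO(n)\times\SO(2)$-isotypic decomposition of Proposition~\ref{prop:isoSO(n)SO(2)} to the subgroup $\SO(n-1)\times\SO(2)$ and then collecting the resulting irreducible summands. First I would observe that on each summand $\HH^{k_1}(\C^n)(z^\top z)^{k_2}$ the factor $\SO(2)$ acts by the scalar character $\chi_{-m}$ with $m = k_1+2k_2$, so this action commutes with, and is unaffected by, the restriction to $\SO(n-1)$. Hence only the $\SO(n)$-factor needs to be decomposed. Applying the branching rule of Proposition~\ref{prop:branching} (valid since $n\geq 4$), which gives $\HH^{k_1}(\C^n)\simeq\bigoplus_{j=0}^{k_1}\HH^j(\C^{n-1})$ over $\SO(n-1)$, each summand becomes $\bigoplus_{j=0}^{k_1}\HH^j(\C^{n-1})\otimes\C_m$ over $\SO(n-1)\times\SO(2)$, where I use Proposition~\ref{prop:isoSO(n)SO(2)}(2) to identify the $\SO(2)$-weight as $\C_m$.

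Next I would identify the irreducible constituents and their multiplicities. Each $\HH^r(\C^{n-1})\otimes\C_m$ is irreducible over $\SO(n-1)\times\SO(2)$, being an outer tensor product of an irreducible $\SO(n-1)$-module with a one-dimensional $\SO(2)$-module; moreover these are pairwise non-isomorphic for distinct pairs $(r,m)$, since the $\SO(n-1)$-factors carry the distinct highest weights $re_1$ and the $\SO(2)$-factors carry the distinct characters $\chi_{-m}$. Collecting the triple sum obtained above according to the pair $(m,r)$, the multiplicity of $\HH^r(\C^{n-1})\otimes\C_m$ equals the number of pairs $(k_1,k_2)$ with $k_1,k_2\in\N$, $k_1+2k_2=m$ and $r\le k_1$. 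Since $k_2=(m-k_1)/2$ is then forced, this is exactly the number of integers $k_1$ with $r\le k_1\le m$ and $k_1\equiv m\pmod 2$.

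Finally I would carry out the elementary parity bookkeeping that turns this count into the stated closed form. Writing the count as the number of integers of a fixed parity lying in the interval $[r,m]$, a short case analysis on the parities of $m$ and $r$ shows that it equals $\lfloor m/2\rfloor - \lfloor (r+\lfloor m+1\rfloor_2)/2\rfloor + 1$; the role of the correction term $\lfloor m+1\rfloor_2$ is precisely to shift the lower endpoint $r$ up to the correct parity class before halving. This step, while routine, is the only delicate point and must be verified carefully in the four cases according to the parities of $m$ and $r$.

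The failure of the multiplicity-free property then follows immediately, since for each even $m\geq 2$ the summand $\HH^0(\C^{n-1})\otimes\C_m$ already occurs with multiplicity $\lfloor m/2\rfloor+1\geq 2$, as one sees by setting $r=0$ in the formula. Thus the hard part of the argument is not conceptual but purely combinatorial: matching the counting of parity-constrained values of $k_1$ with the floor expression appearing in the statement.
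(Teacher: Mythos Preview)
Your proposal is correct and follows essentially the same approach as the paper: both start from the $\SO(n)\times\SO(2)$-isotypic decomposition of Proposition~\ref{prop:isoSO(n)SO(2)}, apply the branching rule of Proposition~\ref{prop:branching} to each $\HH^{k_1}(\C^n)$-factor, and then count how often a given $\HH^r(\C^{n-1})\otimes\C_m$ appears via a parity case analysis on~$m$. Your reformulation of the multiplicity as ``the number of integers $k_1\in[r,m]$ with $k_1\equiv m\pmod 2$'' is a tidy way to phrase exactly the enumeration the paper performs.
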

\begin{proof}
	The isotypic decomposition required is necessarily $\SO(2)$-invariant, and so it is some refinement of the decomposition obtained in Proposition~\ref{prop:isoSO(2)}. In particular, it is enough to obtain the isotypic decomposition of $\cP^m(\C^n)$ by restricting from $\SO(n)$ to $\SO(n-1)$, for which we will use Proposition~\ref{prop:branching}.
	
	First we can use the proof of Propositions~\ref{prop:isoSO(n)SO(2)} to conclude that for every $m \in \N$ we have the following identities and isomorphisms of modules over $\SO(n-1) \times \SO(2)$
	\begin{align*}
		\cP^m(\C^n) &= \bigoplus_{\substack{k_1, k_2 \in \N \\ k_1 + 2k_2 = m}} 
		\HH^{k_1}(\C^n) (z^\top z)^{k_2} \\
		&\simeq \bigoplus_{k=0}^{\lfloor m/2\rfloor} \HH^{m-2k}(\C^n) \otimes \C_m \\
		&\simeq \bigoplus_{k=0}^{\lfloor m/2\rfloor}
			\bigoplus_{r=0}^{m-2k} \HH^r(\C^{n-1}) \otimes \C_m,
	\end{align*}
	where we have used Proposition~\ref{prop:branching} in the last isomorphism. The isotypic decomposition is now obtained by counting how many times each $\HH^r(\C^{n-1}) \otimes \C_m$ appears.
	
	For $m = 2\ell$ even, there are $\ell + 1$ terms in the second line above, and these are: $\HH^{2\ell}(\C^n) \otimes \C_m, \dots, \HH^0(\C^n) \otimes \C_m$. Each such term contains a copy of $\HH^0(\C^{n-1}) \otimes \C_m$, for a total of $\ell+1$ occurrences. But only the first $\ell$ terms contain copies of $\HH^1(\C^{n-1}) \otimes \C_m$ and $\HH^2(\C^{n-1}) \otimes \C_m$, which gives $\ell$ occurrences. In general, for every $j \geq 1$, there is a single copy of each of the terms $\HH^{2j-1}(\C^{n-1}) \otimes \C_m$ and  $\HH^{2j}(\C^{n-1}) \otimes \C_m$ inside each of $\HH^{2\ell}(\C^n) \otimes \C_m, \dots, \HH^{2j}(\C^n) \otimes \C_m$, which yields $\ell - j + 1$ occurrences. We now observe that for $r = 2j, 2j-1$ we have
	\[
		j = \bigg\lfloor 
			\frac{r + 1}{2}
			\bigg\rfloor
			= \bigg\lfloor 
			\frac{r + \lfloor m+1 \rfloor_2}{2}
			\bigg\rfloor
	\]
	since $m = 2\ell$ is even. Hence, the result follows in this case.
	
	For the case $m = 2\ell + 1$, a similar argument shows that, for every $j \geq 0$, the terms $\HH^{2j}(\C^{n-1}) \otimes \C_m$ and  $\HH^{2j+1}(\C^{n-1}) \otimes \C_m$ have exactly $\ell - j + 1$ occurrences. Since now $m = 2\ell + 1$ is odd, we now have for $r = 2j, 2j+1$ that
	\[
		j = \bigg\lfloor 
			\frac{r}{2}
			\bigg\rfloor
			= \bigg\lfloor 
				\frac{r + \lfloor m+1 \rfloor_2}{2}
				\bigg\rfloor.
	\]
	Hence, the result follows as well in this case.
\end{proof}

\subsection{Isotypic decomposition for $\mrT{n} \times \SO(2)$.}
Recall that $\mrT{n} \times \SO(2)$ denotes the maximal torus of $\SO(n) \times \SO(2)$ introduced in subsection~\ref{subsec:isotypicSOnSO2}, which has dimension $\lfloor n/2\rfloor +1$. Its irreducible representations are $1$-dimensional and completely described in terms of characters. As before, the isotypic decomposition for the group $\mrT{n} \times \SO(2)$ is a refinement of the one given in Proposition~\ref{prop:isoSO(2)}. Furthermore, it is enough to obtain the decomposition of each of the subspaces $\cP^m(\C^n)$ with respect to the $\mrT{n}$-action.

The subspace $\cP^m(\C^n)$ has a natural basis given by the monomials $z^\alpha$ where $\alpha \in \N^n$ is such that $|\alpha| = m$. We will replace this basis by another obtained from a linear change of coordinates. More precisely, we have the following basis for every subspace $\cP^m(\C^n)$.

\begin{enumerate}
	\item If $n = 2\ell$ is even, then a basis for $\cP^m(\C^n)$ is given by the polynomials
	\[
		q_{\alpha,\beta}(z)  =
		\prod_{j=1}^{\ell} (z_{2j-1} - i z_{2j})^{\alpha_j} 
				(z_{2j-1} + i z_{2j})^{\beta_j}
	\]
	where $\alpha, \beta \in \N^\ell$ satisfy $|\alpha| + |\beta| = m$.
	\item If $n = 2\ell+1$ is odd, then a basis for $\cP^m(\C^n)$ is given by the polynomials
	\[
		q_{\alpha,\beta,\gamma}(z)  =
		\prod_{j=1}^{\ell} (z_{2j-1} - i z_{2j})^{\alpha_j} 
			(z_{2j-1} + i z_{2j})^{\beta_j} z_{2\ell+1}^\gamma
			= q_{\alpha,\beta}(z') z_{2\ell+1}^\gamma
	\]
	where $z'$ is obtained from $z$ by removing the last coordinate, and $\alpha, \beta \in \N^\ell, \gamma \in \N$ satisfy $|\alpha| + |\beta| + \gamma= m$.	
\end{enumerate}

A straightforward computation (see the proof of Proposition~\ref{prop:harmonic-irred} as well as page 277 from \cite{KnappBeyond}) shows that for $n = 2\ell$ and every $\theta \in \R^\ell$ we have
\[
	A(\theta) \cdot q_{\alpha,\beta}(z) = e^{i(\beta-\alpha)\cdot\theta} q_{\alpha,\beta}(z)
\]
for every $\alpha, \beta \in \N^\ell$, where $A(\theta) \in \mrT{n}$ is given as in equation~\eqref{eq:Atheta}. We conclude that $q_{\alpha,\beta}(z)$ is a weight vector for the $\mrT{n}$-action corresponding to the character $\chi_{\beta-\alpha}(t) = t^{\beta-\alpha}$ under the natural isomorphism $\mrT{n} \simeq \T^\ell$ given by the identification~\eqref{eq:TSO(2)}. From the above remarks it follows that for the case of $n = 2\ell +1$ a corresponding property holds as well. More precisely, $q_{\alpha,\beta,\gamma}(z)$ is a weight vector for the $\mrT{n}$-action corresponding to the character $\chi_{\beta-\alpha}(t) = t^{\beta-\alpha}$, again for the natural isomorphism $\mrT{n} \simeq \T^\ell$.

On the other hand, $\SO(2)$ acts on every subspace $\cP^m(\C^n)$ by the character $\chi_{-m}(t) = t^{-m}$. Hence, we obtain the next result. Recall our assumption $n \geq 3$.

\begin{proposition}\label{prop:isoTSO(2)}
	Let us identify the maximal torus $\mrT{n}\times \SO(2)$ of $\SO(n)\times\SO(2)$ with $\T^{\ell+1}$ through the isomorphism obtained from \eqref{eq:TSO(2)}, where $\ell = \lfloor n/2 \rfloor$. Then, for every $\lambda > n-1$, the following decompositions into irreducible modules over $\mrT{n} \times \SO(2)$ hold.
	\begin{enumerate}
		\item For $n = 2 \ell$ even we have
		\[
			\cA^2_\lambda(\DIV{n}) = 
				\bigoplus_{m=0}^\infty
				\bigoplus_{\substack{\alpha,\beta \in \N^\ell \\ |\alpha| + |\beta| = m}}
				\C q_{\alpha, \beta}(z),
		\]
		where two terms $\C q_{\alpha, \beta}(z)$ and $\C q_{\alpha', \beta'}(z)$ are isomorphic over $\mrT{n} \times \SO(2)$ if and only if $|\alpha| + |\beta| = |\alpha'| + |\beta'|$ and $\alpha - \beta = \alpha' - \beta'$.
		\item For $n = 2 \ell + 1$ odd we have
		\[
			\cA^2_\lambda(\DIV{n}) = 
				\bigoplus_{m=0}^\infty
				\bigoplus_{\substack{\alpha,\beta \in \N^\ell, \gamma \in \N \\ |\alpha| + |\beta| + \gamma = m}}
			\C q_{\alpha, \beta, \gamma}(z),
		\]
		where two terms $\C q_{\alpha, \beta, \gamma}(z)$ and $\C q_{\alpha', \beta', \gamma'}(z)$ are isomorphic over $\mrT{n} \times \SO(2)$ if and only if $|\alpha| + |\beta| + \gamma = |\alpha'| + |\beta'| + \gamma'$ and $\alpha - \beta = \alpha' - \beta'$.
	\end{enumerate}
	Hence, for both cases the isotypic decompositions are not multiplicity-free.
\end{proposition}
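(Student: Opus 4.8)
The plan is to exploit that $\mrT{n}\times\SO(2)$ is a compact abelian group, so that every irreducible unitary representation is one-dimensional and determined up to isomorphism by its character; under the identification $\mrT{n}\times\SO(2)\simeq\T^{\ell+1}$ of the statement, these characters are indexed by $\Z^{\ell+1}$. Consequently, to produce a decomposition into irreducibles it suffices to exhibit a topological basis of $\cA^2_\lambda(\DIV{n})$ consisting of simultaneous eigenvectors for the whole torus, and the isomorphism type of each resulting line is read off directly as its character. So the proof reduces to identifying the $q$'s as joint eigenvectors and then bookkeeping characters.

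First I would invoke the density of $\cP(\C^n)$ in $\cA^2_\lambda(\DIV{n})$ together with $\cP(\C^n)=\bigoplus_m\cP^m(\C^n)$ and the explicit bases $\{q_{\alpha,\beta}\}$ (even case) and $\{q_{\alpha,\beta,\gamma}\}$ (odd case) of each $\cP^m(\C^n)$ recalled just before the statement. The computation preceding the proposition already gives $A(\theta)\cdot q_{\alpha,\beta}=e^{i(\beta-\alpha)\cdot\theta}q_{\alpha,\beta}$, so each $q_{\alpha,\beta}$ is a $\mrT{n}$-weight vector of weight $\chi_{\beta-\alpha}$; combined with the fact that $\SO(2)$ acts on all of $\cP^m(\C^n)$ by $\chi_{-m}$ (from Proposition~\ref{prop:isoSO(2)}), each $q_{\alpha,\beta}$ is a joint eigenvector for $\mrT{n}\times\SO(2)$, with the character that equals $\chi_{\beta-\alpha}$ on $\mrT{n}$ and $\chi_{-m}$ on $\SO(2)$, where $m=|\alpha|+|\beta|$. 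Hence each $\C q_{\alpha,\beta}$ is a one-dimensional, thus irreducible, $\mrT{n}\times\SO(2)$-submodule, and since these lines span the dense subspace $\cP(\C^n)$ we obtain the stated decomposition into irreducible modules. The odd case is identical once one reads off from $q_{\alpha,\beta,\gamma}=q_{\alpha,\beta}(z')z_{2\ell+1}^\gamma$ that the $\mrT{n}$-character is again $\chi_{\beta-\alpha}$ (the last coordinate being $\mrT{n}$-invariant) while the $\SO(2)$-character is $\chi_{-m}$ with $m=|\alpha|+|\beta|+\gamma$.

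For the isomorphism criterion I would use that two one-dimensional torus representations are isomorphic precisely when their characters coincide. Thus $\C q_{\alpha,\beta}$ and $\C q_{\alpha',\beta'}$ are isomorphic over $\mrT{n}\times\SO(2)$ if and only if $\chi_{\beta-\alpha}=\chi_{\beta'-\alpha'}$ and $\chi_{-m}=\chi_{-m'}$, that is, if and only if $\alpha-\beta=\alpha'-\beta'$ and $|\alpha|+|\beta|=|\alpha'|+|\beta'|$, which is exactly the claimed condition; the odd case is the same, with $\gamma$ entering only through $m$. By unitarity of $\pi_\lambda|_{\mrT{n}\times\SO(2)}$, weight spaces for distinct characters are mutually orthogonal, so the isotypic component attached to a fixed character is the finite-dimensional span of the $q$'s of the corresponding degree and difference vector, and its dimension is precisely the number of such index tuples.

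Finally, to see that the decomposition is not multiplicity-free I would exhibit, for each $n\geq3$, two distinct index tuples carrying the same character, forcing some isotypic component to have dimension at least $2$. In the even case $n=2\ell$ (so $\ell\geq2$) the pairs $\alpha=\beta=(1,0,\dots,0)$ and $\alpha'=\beta'=(0,1,0,\dots,0)$ both give $\beta-\alpha=0$ and $m=2$; in the odd case $n=2\ell+1$ (so $\ell\geq1$) the tuples $(\alpha,\beta,\gamma)=\big((1,0,\dots,0),(1,0,\dots,0),0\big)$ and $(0,0,2)$ both give $\beta-\alpha=0$ and $m=2$. I expect the only delicate point to be bookkeeping rather than substance: one must be careful that the displayed ``direct sum'' of the lines $\C q$ is orthogonal only across distinct characters, since within a single isotypic component the $q$'s merely form a basis and need not be mutually orthogonal; however, this does not affect the multiplicity count, which is all that the present claim and the later commutativity criterion of Theorem~\ref{thm:multfree-comm} require.
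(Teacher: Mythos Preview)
Your proof is correct and follows essentially the same approach as the paper: the paper's proof merely points back to the discussion immediately preceding the proposition (the explicit bases $q_{\alpha,\beta}$, $q_{\alpha,\beta,\gamma}$, the torus weight computation, and the $\SO(2)$-character on $\cP^m$), and then observes that the isomorphism type is determined by the pair of characters and that the non-multiplicity-free claim ``follows from the assumption $n\geq 3$''. Your write-up is more explicit in two respects---you give concrete index tuples witnessing multiplicity in both parities, and you flag that the displayed direct sum of lines is only orthogonal across distinct characters---but these are refinements of, not departures from, the paper's argument.
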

\begin{proof}
	The direct sum decomposition and the $\mrT{n} \times \SO(2)$-invariance of the terms was proved above. The characterization of the isomorphism type also follows from the previous discussion since two terms are isomorphic over $\mrT{n} \times \SO(2)$ if and only if they are isomorphic over both $\mrT{n}$ and $\SO(2)$. The last claim follows from the assumption $n \geq 3$.
\end{proof}

\begin{remark}\label{rmk:n=3_to_n-1}
	We observe that for $n = 3$, the maximal torus $\mrT{3}$ of $\SO(3)$ is the subgroup $\SO(2)$ with its canonical upper-left corner embedding. Hence, the case $n=3$ not considered in Proposition~\ref{prop:isoSO(n-1)SO(2)} is now included as part of  Proposition~\ref{prop:isoTSO(2)}.
\end{remark}

\subsection{$C^*$-algebras generated by Toeplitz operators with invariant symbols}\label{subsec:invariant}
For every closed subgroup $H \subset \SO(n)\times \SO(2)$ there is a natural action of $H$ on $L^\infty(\DIV{n})$ given by
\[
	h \cdot a = a \circ h^{-1},
\]
where $a \in L^\infty(\DIV{n})$ and $h \in H$. A symbol $a \in L^\infty(\DIV{n})$ is called $H$-invariant if it is fixed with respect to this action. We will denote by $L^\infty(\DIV{n})^H$ the subspace of all essentially bounded $H$-invariant symbols. Note that this subspace is clearly self-adjoint.

The next result follows from a straightforward computation (see \cite{DOQJFA}) and our previous definitions.

\begin{proposition}\label{prop:HinvHinter}
	Let $H \subset \SO(n)\times\SO(2)$ be a closed subgroup. Then, for every $\lambda > n-1$ and $a \in L^\infty(\DIV{n})$ we have
	\[
		T^{(\lambda)}_{h\cdot a} = 
			\pi_\lambda(h) \circ T^{(\lambda)}_a \circ \pi_\lambda(h)^{-1}
	\]
	for every $h \in H$. In particular, the symbol $a$ is $H$-invariant if and only if $T^{(\lambda)}_a$ intertwines the representation $\pi_\lambda|_H$. Furthermore, we have
	\[
		\cT^{(\lambda)}(L^\infty(\DIV{n})^H) \subset \End_H(\cA^2_\lambda(\DIV{n})),
	\]
	for every $\lambda > n-1$.
\end{proposition}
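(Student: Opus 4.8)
The plan is to establish the three claims in sequence, with the first (the conjugation identity) doing all the real work and the rest following as formal consequences. First I would prove the identity
\[
    T^{(\lambda)}_{h\cdot a} = \pi_\lambda(h) \circ T^{(\lambda)}_a \circ \pi_\lambda(h)^{-1}
\]
by a direct computation on a dense subspace, say $\cP(\C^n)$, and then extend by boundedness. The key inputs are that $T^{(\lambda)}_a = B_\lambda \circ M_a$, that $\pi_\lambda$ is a \emph{unitary} representation (Proposition~\ref{prop:pilambda}), and that the $\SO(n)\times\SO(2)$-action is a linear, hence holomorphic, symplectomorphism preserving the measure $v_\lambda$ and the function $\Delta$. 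Concretely, writing $h = (A,t)$ so that $h^{-1}$ acts by $z \mapsto \overline{t}A^{-1}z$, one applies $\pi_\lambda(h)^{-1}$, then $T^{(\lambda)}_a$, then $\pi_\lambda(h)$ to an arbitrary $f$, and performs the change of variables $w \mapsto h^{-1} w$ inside the integral defining $T^{(\lambda)}_a$. The $v_\lambda$-invariance of the measure makes the Jacobian factor trivial, the invariance of $\Delta$ controls the kernel $K_\lambda$, and the unitarity of $\pi_\lambda$ ensures $M_a$ gets transported to $M_{a\circ h^{-1}} = M_{h\cdot a}$ while $B_\lambda$, being the orthogonal projection onto the $\pi_\lambda$-invariant subspace $\cA^2_\lambda(\DIV{n})$, intertwines with $\pi_\lambda(h)$.

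Once this identity is in hand, the second claim is immediate: the symbol $a$ is $H$-invariant precisely when $h\cdot a = a$ for all $h \in H$, which by the identity is equivalent to $T^{(\lambda)}_a = \pi_\lambda(h)\circ T^{(\lambda)}_a \circ \pi_\lambda(h)^{-1}$ for all $h \in H$, i.e.\ to $T^{(\lambda)}_a$ commuting with every $\pi_\lambda(h)$, which is exactly the statement that $T^{(\lambda)}_a \in \End_H(\cA^2_\lambda(\DIV{n}))$. Here one uses that $T^{(\lambda)}_a$ depends on $a$ only up to $v_\lambda$-null sets, so the equality of symbols is to be read in $L^\infty(\DIV{n})$, consistent with the discussion preceding the statement.

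For the third claim I would argue that $\End_H(\cA^2_\lambda(\DIV{n}))$ is a unital algebra — indeed a von Neumann algebra, as noted in the excerpt — closed under composition, adjoints, scalar multiples, and containing the identity. The second claim places every Toeplitz operator $T^{(\lambda)}_a$ with $a \in L^\infty(\DIV{n})^H$ inside this algebra, and since $L^\infty(\DIV{n})^H$ is self-adjoint (so the generators close under $*$), the unital $C^*$-algebra $\cT^{(\lambda)}(L^\infty(\DIV{n})^H)$ they generate is contained in it. The only point requiring care is the closure under limits: $\End_H$ is norm-closed (in fact weakly closed), so the containment survives passage to the generated $C^*$-algebra.

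The main obstacle will be the rigorous justification of the change of variables in the first step, specifically confirming that every factor in the integrand transforms correctly and that no stray Jacobian or character survives. The slight subtlety flagged in the text — that $\pi_\lambda$ is the holomorphic discrete series with an $\SO(2)$-character factor dropped — means one should verify that the omitted $z$-independent character genuinely cancels in the conjugation $\pi_\lambda(h)\,(\cdot)\,\pi_\lambda(h)^{-1}$, as $z$-independent scalar factors always do. I expect this to be routine given the invariance properties of $\Delta$ and $v_\lambda$ already established, which is why the excerpt describes the computation as straightforward and cites \cite{DOQJFA}.
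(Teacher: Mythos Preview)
Your approach matches the paper's, which gives no explicit proof and simply says the result ``follows from a straightforward computation (see \cite{DOQJFA}) and our previous definitions.'' Your outline is exactly the computation being alluded to.

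One point deserves a remark: in the second claim you assert that $h\cdot a = a$ is \emph{equivalent} to $T^{(\lambda)}_{h\cdot a} = T^{(\lambda)}_a$, but the conjugation identity only gives the forward implication. The converse requires injectivity of the symbol map $a \mapsto T^{(\lambda)}_a$ on $L^\infty(\DIV{n})$, which is true (it follows from injectivity of the Berezin transform on bounded symmetric domains) but which you do not invoke. Since the paper itself does not spell this out either, this is not a divergence from the paper so much as a shared omission; still, for a complete proof of the ``only if'' direction you should cite or prove that injectivity.
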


We now obtain a result that can be considered as providing a correspondence between multiplicity-free restrictions of $\pi_\lambda$ and commutative $C^*$-algebras generated by Toeplitz operators with invariant symbols. This result is similar to Theorem~6.4 from \cite{DOQJFA}, but we require an additional condition which is enough for our purposes. We provide a sketch of the proof, based on the arguments from \cite{DOQJFA}, for the sake of completeness.

\begin{theorem}\label{thm:multfree-comm}
	Let $H \subset \SO(n)\times\SO(2)$ be a closed subgroup such that the isotypic components of $\pi_\lambda|_H$ are finite dimensional. Then, for every $\lambda > n-1$ the following conditions are equivalent.
	\begin{enumerate}
		\item The restriction $\pi_\lambda|_H$ is multiplicity-free.
		\item The von Neumann algebra $\End_H(\cA^2_\lambda(\DIV{n}))$ is commutative.
		\item The $C^*$-algebra $\cT^{(\lambda)}(L^\infty(\DIV{n})^H)$ is commutative.
	\end{enumerate}
\end{theorem}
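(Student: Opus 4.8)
The plan is to prove the cyclic chain of implications $(1) \Rightarrow (2) \Rightarrow (3) \Rightarrow (1)$, relying throughout on Proposition~\ref{prop:HinvHinter}, which places $\cT^{(\lambda)}(L^\infty(\DIV{n})^H)$ inside $\End_H(\cA^2_\lambda(\DIV{n}))$. The finite-dimensionality hypothesis on the isotypic components is what makes the structure theory of $\End_H$ tractable: it lets us invoke Schur's Lemma componentwise without worrying about infinite-dimensional multiplicity spaces.

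First I would establish $(1) \Rightarrow (2)$. Write the isotypic decomposition $\cA^2_\lambda(\DIV{n}) = \bigoplus_{j} \HH_j$ for $\pi_\lambda|_H$. Any $T \in \End_H$ must preserve each $\HH_j$ (since intertwining operators respect the isotypic decomposition) and, by Schur's Lemma, its restriction to $\HH_j$ is determined by how it acts on the multiplicity space. When the restriction is multiplicity-free, each $\HH_j$ is itself irreducible, so Schur forces $T|_{\HH_j}$ to be a scalar. Thus $\End_H(\cA^2_\lambda(\DIV{n}))$ consists precisely of operators acting as a scalar on each $\HH_j$, which is an abelian von Neumann algebra (isomorphic to $\ell^\infty$ of the index set). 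This gives commutativity.

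The implication $(2) \Rightarrow (3)$ is immediate: by Proposition~\ref{prop:HinvHinter} we have the inclusion $\cT^{(\lambda)}(L^\infty(\DIV{n})^H) \subset \End_H(\cA^2_\lambda(\DIV{n}))$, and a subalgebra of a commutative algebra is commutative.

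The real content, and what I expect to be the main obstacle, is the contrapositive direction, best phrased as $\neg(1) \Rightarrow \neg(3)$ (equivalently $(3) \Rightarrow (1)$). Suppose $\pi_\lambda|_H$ is \emph{not} multiplicity-free, so some isotypic component $\HH_j$ contains at least two isomorphic irreducible copies $V, V'$; then $\End_H$ restricted to $\HH_j$ contains a full matrix algebra $M_k(\C)$ with $k \geq 2$ and is noncommutative. To refute commutativity of the Toeplitz algebra, it is not enough to know $\End_H$ is noncommutative — I must produce \emph{actual invariant symbols} whose Toeplitz operators fail to commute. The strategy, following the arguments of \cite{DOQJFA}, is to show that the Toeplitz operators $T^{(\lambda)}_a$ with $a \in L^\infty(\DIV{n})^H$ generate \emph{all} of $\End_H(\cA^2_\lambda(\DIV{n}))$, not merely a subalgebra of it; once that density/surjectivity is in hand, noncommutativity of $\End_H$ transfers directly to $\cT^{(\lambda)}(L^\infty(\DIV{n})^H)$. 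This surjectivity is the delicate step: it requires showing that the invariant symbols are rich enough to separate the different irreducible copies inside a single isotypic component, which typically rests on a density argument for invariant symbols (e.g.\ that averaging over $H$ produces enough invariant functions) together with the reproducing-kernel structure of the Bergman projection. I would adapt the corresponding argument from \cite{DOQJFA}, using the finite-dimensionality of the isotypic components to reduce the problem to a finite-dimensional linear-algebra statement about the image of the symbol map on each component.
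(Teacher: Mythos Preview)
Your plan matches the paper's proof almost exactly: the implications $(1)\Rightarrow(2)$ and $(2)\Rightarrow(3)$ are handled identically (Schur's Lemma on each irreducible isotypic component, then the inclusion from Proposition~\ref{prop:HinvHinter}), and for $(3)\Rightarrow(1)$ both arguments hinge on the averaging construction of \cite{DOQJFA} together with the finite-dimensionality of the offending isotypic component $\HH_j$.

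The only tactical difference worth flagging is in the endgame of $\neg(1)\Rightarrow\neg(3)$. You aim to show that the Toeplitz operators with $H$-invariant symbols generate \emph{all} of $\End_H(\cA^2_\lambda(\DIV{n}))$ and then transfer noncommutativity wholesale. The paper is more economical: it does not prove global surjectivity but instead picks a single \emph{non-normal} element $T\in\End_H(\HH_j)\simeq M_{m\times m}(\C)$ (with $m\geq 2$), extends it by zero, and uses the averaging argument from \cite{DOQJFA} only to produce one invariant symbol $a$ with $T^{(\lambda)}_a|_{\HH_j}=T|_{\HH_j}$. Since $\cT^{(\lambda)}(L^\infty(\DIV{n})^H)$ is a $C^*$-algebra (the symbol space is self-adjoint), the presence of a non-normal element already forces noncommutativity. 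This sidesteps the need to control the Toeplitz operator on all isotypic components simultaneously, which your ``generate all of $\End_H$'' formulation would require and which is a genuinely stronger (though still true) statement than what is needed here.
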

\begin{proof}
	Consider the isotypic decomposition for $\pi_\lambda|_H$ given by
	\[
		\cA^2_\lambda(\DIV{n}) = \bigoplus_{j\in A} \HH_j,
	\]
	where $\HH_j$ is an isotypic component. 
	
	If (1) holds, then each subspace $\HH_j$ is irreducible over $H$. An application of Schur's lemma shows that for every $T \in \End_H(\cA^2_\lambda(\DIV{n}))$ we have $T(\HH_j) \subset \HH_j$ for every $j \in A$. Furthermore, we can also conclude that for such a $T$ we have $T|_{\HH_j} = c_j I_{\HH_j}$ for some $c_j \in \C$ and every $j \in A$. Hence, the commutativity of $\End_H(\cA^2_\lambda(\DIV{n}))$ is immediate, thus showing (2). On the other hand, (2) clearly implies (3) by Proposition~\ref{prop:HinvHinter}.
	
	If the isotypic decomposition above is not multiplicity-free, then for some $j \in A$ there exists $U,V \subset \HH_j$ isomorphic irreducible $H$-modules such that $U\cap V = 0$. If we choose $T_0 : U \rightarrow V$ any isomorphism over $H$, then for any matrix $A \in M_{2\times2}(\C)$ with entries
	\[
		A = \begin{pmatrix}
				a & b \\
				c & d		
			\end{pmatrix}
	\]
	the map given by
	\begin{align*}
		\widehat{T}_A : U\oplus V &\rightarrow U\oplus V \\
		u+v &\mapsto au + bT_0^{-1}(v) + cT_0(u) + dv,
	\end{align*}
	where $u \in U$ and $v \in V$, intertwines the $H$-action. This map can be extended to an element $T_A \in \End_H(\cA^2_\lambda(\DIV{n}))$ as $0$ on the orthogonal complement of $U \oplus V$. It is easy to see that the assignment $A \mapsto T_A$ defines an injective homomorphism of algebras from $M_{2\times2}(\C)$ into $\End_H(\cA^2_\lambda(\DIV{n}))$, thus showing that the latter is not commutative. This proves that (2) implies (1).

	Finally, to prove that (3) implies (1), we assume that $\pi_\lambda|_H$ is not multiplicity-free. Hence, in the notation above, there exists an isotypic component $\HH_j$ which is not irreducible but, by assumption, it is finite dimensional. In particular, for some $m \geq 2$ we have $\End_H(\HH_j) \simeq M_{m\times m}(\C)$ as $C^*$-algebras. Choose some non-normal element $M_{m\times m}(\C)$, denote by $T$ the corresponding element in $\End_H(\HH_j)$ and extend it by $0$ on the rest of the isotypic components to obtain a non-normal element in $\End_H(\cA^2_\lambda(\DIV{n}))$ which we will also denote by $T$. Using the averaging techniques of Section~6 from \cite{DOQJFA} we can find a symbol $a \in L^\infty(\DIV{n})^H$ such that
	\[
		\langle T f, g \rangle_\lambda = 
			\langle T^{(\lambda)}_a f, g \rangle_\lambda
	\]
	for every $f, g \in \HH_j$ (see the proof of Proposition~6.2 from \cite{DOQJFA}). Since $\HH_j$ is invariant under any element from $\End_H(\cA^2_\lambda(\DIV{n}))$ it follows that
	\[
		T|_{\HH_j} = T^{(\lambda)}_a|_{\HH_j}, \quad
			T^*|_{\HH_j} = (T^{(\lambda)}_a)^*|_{\HH_j},
	\]
	thus proving that $T^{(\lambda)}_a \in \cT^{(\lambda)}(L^\infty(\DIV{n})^H)$ is not normal. Since $\cT^{(\lambda)}(L^\infty(\DIV{n})^H)$ is a $C^*$-algebra, it follows that it is not commutative.	Hence, (3) implies (1).
\end{proof}

We apply the previous criterion to the subgroups of $\SO(n)\times \SO(2)$ whose isotypic decompositions were computed in the previous subsections. More precisely, the next result is a consequence of Theorem~\ref{thm:multfree-comm}, Propositions~\ref{prop:isoSO(2)}, \ref{prop:isoSO(n)SO(2)}, \ref{prop:isoSO(n-1)SO(2)} and \ref{prop:isoTSO(2)} and Remark~\ref{rmk:n=3_to_n-1}.

\begin{theorem}\label{thm:comm-invariance}
	Assume that $n \geq 3$. Then, for every $\lambda > n-1$, and for the subgroups specified, the following commutativity properties hold.
	\begin{enumerate}
		\item The $C^*$-algebra $\cT^{(\lambda)}(L^\infty(\DIV{n})^{\SO(n) \times \SO(2)})$ is commutative.
		\item For $H$ either of the subgroups $\SO(n-1) \times \SO(2)$, $\mrT{n} \times \SO(2)$ or $\SO(2)$, the $C^*$-algebra $\cT^{(\lambda)}(L^\infty(\DIV{n})^H)$ is not commutative.
	\end{enumerate}
\end{theorem}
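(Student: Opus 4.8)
The plan is to prove Theorem~\ref{thm:comm-invariance} by a direct application of the criterion in Theorem~\ref{thm:multfree-comm}, matching each subgroup against the isotypic decomposition computed in the preceding subsections. The only hypothesis of Theorem~\ref{thm:multfree-comm} that must be verified is that the isotypic components of $\pi_\lambda|_H$ are finite dimensional; but for each $H$ under consideration the decomposition is a refinement of the one in Proposition~\ref{prop:isoSO(2)}, so every isotypic component is contained in a finite sum of the spaces $\cP^m(\C^n)$, which are themselves finite dimensional. Hence the criterion applies and commutativity of $\cT^{(\lambda)}(L^\infty(\DIV{n})^H)$ is equivalent to $\pi_\lambda|_H$ being multiplicity-free.

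For part (1), I would invoke Proposition~\ref{prop:isoSO(n)SO(2)}(3), which states that the isotypic decomposition of $\pi_\lambda|_{\SO(n)\times\SO(2)}$ is multiplicity-free. By Theorem~\ref{thm:multfree-comm} the equivalence $(1)\Leftrightarrow(3)$ then gives commutativity of $\cT^{(\lambda)}(L^\infty(\DIV{n})^{\SO(n)\times\SO(2)})$ at once. For part (2), I would argue the contrapositive: each of the three decompositions is \emph{not} multiplicity-free, so by the same equivalence the corresponding $C^*$-algebra is noncommutative. For $\SO(2)$ this is the final sentence of Proposition~\ref{prop:isoSO(2)}; for $\mrT{n}\times\SO(2)$ it is the closing claim of Proposition~\ref{prop:isoTSO(2)}; and for $\SO(n-1)\times\SO(2)$ (with $n\geq 4$) it is the last line of Proposition~\ref{prop:isoSO(n-1)SO(2)}. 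The one genuine bookkeeping point is the case $n = 3$ for the subgroup $\SO(n-1)\times\SO(2) = \SO(2)\times\SO(2)$, which is not covered by Proposition~\ref{prop:isoSO(n-1)SO(2)}; here Remark~\ref{rmk:n=3_to_n-1} identifies $\mrT{3} = \SO(2)$, so the needed failure of multiplicity-freeness for $\SO(2)\times\SO(2)$ is supplied instead by Proposition~\ref{prop:isoTSO(2)}.

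I do not expect any real obstacle: the substantive work has already been carried out in establishing the isotypic decompositions, and all that remains is to cite the multiplicity-free (or its negation) conclusion of each proposition and feed it into Theorem~\ref{thm:multfree-comm}. If there is a subtle step, it is the verification that the abstract hypothesis of finite-dimensional isotypic components holds uniformly; this is immediate once one observes that each $\SO(2)$-isotypic component $\cP^m(\C^n)$ is finite dimensional and that the refinements by the larger subgroups only subdivide these spaces further. Thus the proof reduces to assembling the referenced results, with the $n=3$ identification from Remark~\ref{rmk:n=3_to_n-1} as the sole case requiring explicit attention.
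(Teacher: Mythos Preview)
Your proposal is correct and follows exactly the paper's own approach: the paper states that Theorem~\ref{thm:comm-invariance} is a direct consequence of Theorem~\ref{thm:multfree-comm} together with Propositions~\ref{prop:isoSO(2)}, \ref{prop:isoSO(n)SO(2)}, \ref{prop:isoSO(n-1)SO(2)}, \ref{prop:isoTSO(2)} and Remark~\ref{rmk:n=3_to_n-1}, and gives no further details. Your write-up is in fact more thorough than the paper's, since you explicitly verify the finite-dimensionality hypothesis of Theorem~\ref{thm:multfree-comm} and spell out the $n=3$ bookkeeping.
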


\begin{remark}\label{rmk:noncommMASG}
	If $H_1 \subset H_2$ are subgroups of $\SO(n)\times\SO(2)$, then we clearly have
	\[
		L^\infty(\DIV{n})^{H_2} \subset L^\infty(\DIV{n})^{H_1}
	\]
	and correspondingly we have the inclusion
	\[
		\cT^{(\lambda)}(L^\infty(\DIV{n})^{H_2}) \subset \cT^{(\lambda)}(L^\infty(\DIV{n})^{H_1}).
	\]
	In other words, both the invariance of symbols and the $C^*$-algebra generated by Toeplitz operators with invariant symbols reverse inclusion with respect to the subgroup considered. In Theorem~\ref{thm:comm-invariance}, the largest subgroup considered is $\SO(n)\times\SO(2)$, which yields the smallest and only (in the statement) commutative $C^*$-algebra through invariant symbols. The smallest subgroup in that result is $\SO(2)$ for which we obtain the largest of the $C^*$-algebras considered and it is non-commutative. 
	
	The subgroups in between, $\SO(n-1) \times \SO(2)$ and $\mrT{n} \times \SO(2)$, provide invariant symbols that yield non-commutative $C^*$-algebras. And there are some interesting facts about these cases.
	
	The subgroup $\mrT{n} \times \SO(2)$ is a maximal Abelian subgroup of the biholomorphism group of $\DIV{n}$. As noted in \cite{QVUnitBall1,QVUnitBall2}, every maximal Abelian subgroup of the biholomorphism group of the  unit ball $\mathbb{B}^n$ yields invariant symbols whose Toeplitz operators generate commutative $C^*$-algebras. Hence, the previously observed behavior for the unit ball does not extend to the domain $\DIV{n}$. We recall that the unit ball has rank $1$ and the domain $\DIV{n}$ has rank $2$ for every $n \geq 2$.
\end{remark}

\begin{lemma}\label{lem:maxSO}
	For every $n \geq 3$, the subgroup $\SO(n-1) \times \SO(2)$ is a maximal connected subgroup of $\SO(n) \times \SO(2)$.
\end{lemma}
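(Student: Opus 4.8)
The plan is to translate the group-theoretic statement into one about Lie subalgebras and then exploit the symmetric-space structure of the sphere. Since $\SO(n)\times\SO(2)$ is connected, its connected (immersed) subgroups are in inclusion-preserving bijection with the Lie subalgebras of $\fg = \so(n)\oplus\so(2)$, and this bijection is a lattice isomorphism; under it, $\SO(n-1)\times\SO(2)$ corresponds to $\fh = \so(n-1)\oplus\so(2)$. Hence it suffices to prove that $\fh$ is a maximal subalgebra of $\fg$, that is, that no subalgebra lies strictly between them.

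First I would dispose of the $\SO(2)$ factor. For $n\geq 3$ the algebra $\so(n)$ has trivial center, so the ideal $\{0\}\oplus\so(2)$ is exactly the center of $\fg$. Let $q\colon\fg\to\so(n)$ be the projection, whose kernel is this central ideal. Any subalgebra $\fk$ with $\fh\subseteq\fk\subseteq\fg$ contains $\{0\}\oplus\so(2)=\ker q$, so $\fk = q^{-1}(q(\fk))$, and $q(\fk)$ is a subalgebra of $\so(n)$ with $\so(n-1)\subseteq q(\fk)\subseteq\so(n)$. Thus intermediate subalgebras of the pair $(\fh,\fg)$ correspond bijectively to intermediate subalgebras of the pair $(\so(n-1),\so(n))$, and the problem reduces to showing that $\so(n-1)$ is maximal in $\so(n)$.

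For the reduced problem I would use the Cartan-type decomposition associated to the sphere $S^{n-1}=\SO(n)/\SO(n-1)$. Writing $\so(n)=\so(n-1)\oplus\mathfrak{p}$, where $\mathfrak{p}$ is the orthogonal complement of $\so(n-1)$, one has $[\so(n-1),\mathfrak{p}]\subseteq\mathfrak{p}$, and as an $\SO(n-1)$-module $\mathfrak{p}$ is isomorphic to the standard representation on $\R^{n-1}$. Given any subalgebra $\fk$ with $\so(n-1)\subseteq\fk\subseteq\so(n)$, the inclusion $\so(n-1)\subseteq\fk$ forces $\fk$ to be invariant under the adjoint action of $\so(n-1)$; splitting each element of $\fk$ into its $\so(n-1)$- and $\mathfrak{p}$-components then gives $\fk=\so(n-1)\oplus(\fk\cap\mathfrak{p})$, with $\fk\cap\mathfrak{p}$ an $\so(n-1)$-submodule of $\mathfrak{p}$.

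The key point, and the only place where the hypothesis $n\geq 3$ is genuinely used, is the irreducibility of $\mathfrak{p}\cong\R^{n-1}$ as a real $\SO(n-1)$-module. Since $\SO(n-1)$ acts transitively on the unit sphere of $\R^{n-1}$ whenever $n-1\geq 2$, the standard representation admits no proper invariant subspace and so is irreducible over $\R$; I would emphasize that this covers the borderline case $n=3$, where $\so(n-1)=\so(2)$ is abelian and $\mathfrak{p}\cong\R^2$ carries the rotation action, still irreducible over $\R$. Consequently $\fk\cap\mathfrak{p}$ is either $0$, giving $\fk=\so(n-1)$, or all of $\mathfrak{p}$, giving $\fk=\so(n)$. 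This establishes maximality of $\so(n-1)$ in $\so(n)$, and unwinding the reductions proves that $\SO(n-1)\times\SO(2)$ is maximal connected in $\SO(n)\times\SO(2)$. The hard part is precisely securing the real-irreducibility input uniformly in $n$; the remaining steps are formal.
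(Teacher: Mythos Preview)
Your proof is correct and rests on the same underlying idea as the paper's: the maximality of $\SO(n-1)$ in $\SO(n)$ follows from the irreducibility of the isotropy representation $\mathfrak{p}\cong\R^{n-1}$ of the symmetric space $S^{n-1}=\SO(n)/\SO(n-1)$. The paper simply invokes this as a black box, citing Proposition~5.1 of Chapter~VIII in Helgason (which says that for an irreducible Riemannian symmetric pair $(G,K)$ the subgroup $K$ is maximal among connected proper subgroups), whereas you unpack that argument explicitly at the Lie-algebra level. Your reduction step disposing of the $\SO(2)$ factor via the center is also more explicit than the paper's one-line assertion. So the route is the same; you have supplied the details the paper outsources to a reference.
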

\begin{proof}
	We note that it is enough to show that $\SO(n-1)$ is a maximal connected subgroup of $\SO(n)$ whenever $n \geq 3$. In the notation of \cite{HelgasonDGLSS}, and for the embedding $\SO(n-1) \subset \SO(n)$ considered, the pair $(\SO(n), \SO(n-1))$ is an irreducible Riemannian symmetric pair since the corresponding quotient yields the sphere $S^{n-1}$, an irreducible symmetric space of compact type for $n \geq 3$. Hence, Proposition~5.1 in Chapter~VIII from \cite{HelgasonDGLSS} implies the maximality of $\SO(n-1)$ in $\SO(n)$.
\end{proof}

As a consequence of Theorem~\ref{thm:comm-invariance} and Lemma~\ref{lem:maxSO} we obtain the following result.

\begin{corollary}\label{cor:noncommMaxConn}
	For $n \geq 3$, the subgroup $\SO(n-1) \times \SO(2)$ is a maximal connected subgroup of $\SO(n) \times \SO(2)$ such that $\cT^{(\lambda)}(L^\infty(\DIV{n})^{\SO(n) \times \SO(2)})$ is commutative but $\cT^{(\lambda)}(L^\infty(\DIV{n})^{\SO(n-1) \times \SO(2)})$ is non-commutative, for every $\lambda > n-1$.
\end{corollary}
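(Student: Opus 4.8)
The plan is to assemble the statement directly from the two results cited in its preamble, so the proof is short and requires no genuinely new work; the entire content has already been established upstream. The strategy is simply to read off the three assertions of the corollary from Lemma~\ref{lem:maxSO} and the two parts of Theorem~\ref{thm:comm-invariance}, and to observe that together they are exactly the claim.

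First I would invoke Lemma~\ref{lem:maxSO}, which says that for every $n \geq 3$ the subgroup $\SO(n-1) \times \SO(2)$ is maximal connected in $\SO(n) \times \SO(2)$; this settles the structural (group-theoretic) half of the statement. Next I would apply Theorem~\ref{thm:comm-invariance}(1) to conclude that $\cT^{(\lambda)}(L^\infty(\DIV{n})^{\SO(n) \times \SO(2)})$ is commutative for every $\lambda > n-1$. Finally I would specialize Theorem~\ref{thm:comm-invariance}(2) to the case $H = \SO(n-1) \times \SO(2)$, which is one of the three subgroups explicitly listed there, to obtain that $\cT^{(\lambda)}(L^\infty(\DIV{n})^{\SO(n-1) \times \SO(2)})$ is non-commutative for every $\lambda > n-1$. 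These three facts are precisely the three clauses of the corollary, so nothing further is needed.

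There is no real obstacle to overcome in this step, since all of the substantive work lives in the results being combined. For transparency I would recall where that substance sits: the maximality in Lemma~\ref{lem:maxSO} rests on Helgason's classification applied to the irreducible symmetric pair $(\SO(n), \SO(n-1))$, whose quotient is the sphere $S^{n-1}$; and the non-commutativity in Theorem~\ref{thm:comm-invariance}(2) ultimately traces back, via the criterion of Theorem~\ref{thm:multfree-comm}, to the failure of the isotypic decomposition of $\pi_\lambda|_{\SO(n-1) \times \SO(2)}$ in Proposition~\ref{prop:isoSO(n-1)SO(2)} to be multiplicity-free. The point worth emphasizing, which is the reason the corollary is stated at all, is the contrast it exhibits: passing from $\SO(n) \times \SO(2)$ to the largest possible proper connected subgroup already destroys commutativity, so the commutativity obtained from $\SO(n) \times \SO(2)$-invariant symbols cannot be recovered by relaxing the group to any strictly larger connected subgroup sitting below the full isotropy group.
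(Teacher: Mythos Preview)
Your proof is correct and matches the paper's own argument exactly: the corollary is stated there as an immediate consequence of Theorem~\ref{thm:comm-invariance} and Lemma~\ref{lem:maxSO}, with no further proof given, and you have spelled out precisely that deduction. One caution about your closing interpretive remark: $\SO(n-1)\times\SO(2)$ is \emph{a} maximal connected subgroup, not \emph{the} largest one, and the corollary does not rule out other maximal connected subgroups of $\SO(n)\times\SO(2)$ yielding commutative $C^*$-algebras---indeed Remark~\ref{rmk:n=4DI22} exhibits exactly such a subgroup when $n=4$.
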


\begin{remark}\label{rmk:noncommMaxConn}
	As noted in Remark~\ref{rmk:noncommMASG}, there exist maximal Abelian subgroups of the group of biholomorphisms of $\DIV{n}$ whose corresponding invariant symbols yield Toeplitz operators that generate non-commutative $C^*$-algebras. In fact, the subgroup $\mrT{n} \times \SO(2)$ is maximal Abelian in $\SO(n) \times \SO(2)$, and we proved that, through invariance, the latter yields commutative $C^*$-algebras, but the former does~not.
	
	We might try to relax the conditions to obtain commutativity. For example, instead of considering maximal Abelian subgroups we may consider maximal connected subgroups, whether Abelian or not. However, Corollary~\ref{cor:noncommMaxConn} proves the existence of a maximal connected subgroup of $\SO(n) \times \SO(2)$ whose invariant symbols yield Toeplitz operators that generate a non-commutative $C^*$-algebra. 
	
	On the other hand, we note that $\mrT{n}$, our maximal torus for $\SO(n)$, is contained in $\SO(n-1)$ if and only if $n$ is odd. It is precisely in this case that we have the inclusion $\mrT{n} \times \SO(2) \subset \SO(n-1) \times \SO(2)$. Hence, for $n$ odd, $\SO(n-1) \times \SO(2)$ is a maximal connected subgroup of the group $\SO(n) \times \SO(2)$ that furthermore contains a maximal toral subgroup. So not even both of these conditions, are enough to yield commutative $C^*$-algebras.
	
	Note however, that we have made no attempt to list the maximal connected subgroups of $\SO(n) \times \SO(2)$. But, we did exhibit such a subgroup for which we obtain a non-commutative $C^*$-algebra generated by Toeplitz operators. The problem as to whether some other maximal connected subgroup of $\SO(n)\times \SO(2)$ yields commutative $C^*$-algebras through invariance remains open in general.
\end{remark}
\begin{remark}\label{rmk:n=4DI22}
	In dimension $n = 4$, there is one known case for which we do obtain commutative $C^*$-algebras generated by Toeplitz operators from invariance of symbols with respect to a maximal connected subgroup of the isotropy group of the origin. This is a consequence of the results from \cite{DOQMatrix22} and we now proceed to explain it together with its relation to our current work. 
	
	We recall from Section~\ref{sec:DIV} that the domain $\DIV{4}$ is biholomorphically equivalent to $\mathrm{D}^\mathrm{I}_{2 \times 2}$, in accordance to the isomorphism $\so(4,2) \simeq \su(2,2)$. For the domain $\mathrm{D}^\mathrm{I}_{2\times 2}$, the maximal compact subgroup of biholomorphisms that fix the origin is realized by the group $\Spe(\U(2) \times \U(2))$. With the diagonal embedding $\T^2 \subset \U(2)$, a natural maximal toral subgroup of $\Spe(\U(2) \times \U(2))$ is given by $\Spe(\T^2 \times \T^2)$. It is known (see~\cite{DOQJFA}) that, through the use of invariant symbols, the former group yields commutative $C^*$-algebras generated by Toeplitz operators, while the latter subgroup yields non-commutative ones. However, it was proved in \cite{DOQMatrix22} that the symbols invariant under the subgroup $\Spe(\U(2) \times \T^2)$ yield Toeplitz operators generating commutative $C^*$-algebras. Furthermore, $\Spe(\U(2) \times \T^2)$ is easily seen to be a maximal connected subgroup of the isotropy group at the origin $\Spe(\U(2) \times \U(2))$ (see \cite{DOQMatrix22}) and it clearly contains the maximal toral subgroup $\Spe(\T^2 \times \T^2)$. Through the biholomorphism between $\DIV{4}$ and $\mathrm{D}^\mathrm{I}_{2 \times 2}$ this implies the existence of a maximal connected subgroup $H$ of $\SO(4) \times \SO(2)$ containing a maximal toral subgroup so that the $H$-invariant symbols yield Toeplitz operators generating commutative $C^*$-algebras. This is in clear contrast with Corollary~\ref{cor:noncommMaxConn}, so we explain the nature of this subgroup $H$.
	
	The subgroup $H$ can be described as follows through its Lie algebra. For $n = 4$, we have the well known isomorphisms
	\[
		\so(4) \simeq \so(3) \oplus \so(3) \simeq \su(2) \oplus \su(2).
	\]
	Hence, by using our identification $\SO(2) \simeq \T$, the Lie algebras of the isotropy subgroups at the origin of $\DIV{4}$ and $\mathrm{D}^\mathrm{I}_{2 \times 2}$ are related by the isomorphism
	\[
		\so(4) \oplus \so(2) \simeq \so(3) \oplus \so(3) \oplus \so(2) \simeq \su(2) \oplus \su(2) \oplus \so(2),
	\]
	and the subgroup $\Spe(\U(2) \times \T^2)$ has Lie subalgebra $\su(2) \oplus \so(2) \oplus \so(2)$ naturally embedded in the third term of the last sequence of isomorphisms. Hence, through such isomorphisms, the maximal connected subgroup $H \subset \SO(4) \times \SO(2)$ has Lie subalgebra $\fh$ given by 
	\[
		\fh = \so(3) \oplus \so(2) \oplus \so(2) \subset \so(3) \oplus \so(3) \oplus \so(2) \simeq \so(4) \oplus \so(2).
	\]
	In particular, $H$ is locally isomorphic to $\SO(3) \times \SO(2) \times \SO(2)$.
	
	On the other hand, the subgroup $\SO(3) \times \SO(2) \subset \SO(4) \times \SO(2)$ considered in Corollary~\ref{cor:noncommMaxConn} (and throughout this work in general) corresponds, at the Lie algebra level, to the diagonal embedding
	\begin{align*}
		\so(3) &\rightarrow \so(3) \oplus \so(3) \simeq \so(4) \\
		X &\mapsto (X,X).
	\end{align*}
	This is a consequence of the local equivalence of the pairs of groups $(\SO(4), \SO(3))$ and $(\SO(3)\times\SO(3), \SO(3))$ (for the diagonal embedding in the latter) considered as Riemannian symmetric pairs. Such local equivalence is explained in Example~II in page~240 from~\cite{HelgasonDGLSS}.
	
	Hence, the fact that $\SO(4)$ is not simple allows to have enough room to embed $\SO(3)$ in an alternative manner to obtain a maximal connected subgroup $H$ of $\SO(4) \times \SO(2)$, locally isomorphic to $\SO(3) \times \SO(2) \times \SO(2)$, containing a maximal toral subgroup, and so that the $C^*$-algebras $\cT^{(\lambda)}(L^\infty(\DIV{4})^H)$ are commutative for every $\lambda > 3$. This fact further emphasizes our claim in Remark~\ref{rmk:noncommMaxConn} that it remains open to consider all maximal connected subgroups of $\SO(n) \times \SO(2)$.
\end{remark}

\section{Moment maps of the torus $\mrT{n}\times\SO(2)$ and its subgroup $\SO(2)$}\label{sec:momentmaps}
In this section we will introduce a type of symbol that can be associated to actions using the symplectic structure of the domain $\DIV{n}$. This is achieved through the notion of moment map, which we will compute for the maximal torus introduced before.

We start by considering a Lie subgroup $H$ of the biholomorphism group of $\DIV{n}$ and by $\fh$ its corresponding Lie algebra. Then, for every $X \in \fh$ there is an associated $1$-parameter subgroup of biholomorphisms $r \mapsto \exp(rX)$ whose orbits in $\DIV{n}$ we can differentiate to obtain a vector field on $\DIV{n}$ as follows
\[
	X^\sharp_z = \eval[2]{\frac{\dif}{\dif r}}_{r=0}
		\exp(rX)\cdot z,
\]
for every $z \in \DIV{n}$. It is a straightforward exercise to show that if $X^\sharp = (g_1, \dots, g_n)$ as a $\C^n$-valued function, then we have
\begin{equation}\label{eq:Xsharpinpartials}
	X^\sharp = \sum_{j=1}^n \bigg(g_j \frac{\partial}{\partial z_j}
		+ \overline{g}_j \frac{\partial}{\partial \overline{z}_j}
	\bigg).
\end{equation}

Recall that $\DIV{n}$ is a K\"ahler manifold with symplectic form $\omega$ given by Proposition~\ref{prop:SympForm}. We also recall that for a given Lie group $H$ with Lie algebra $\fh$, the adjoint representation of $H$ is a homomorphism $\Ad = \Ad_H : H \rightarrow \GL(\fh)$ obtained by differentiating the conjugation (see \cite{HelgasonDGLSS}). This induces a representation $\Ad^* : H \rightarrow \GL(\fh^*)$ on the dual vector space $\fh^*$ given by $\Ad^*(h) = \Ad(h^{-1})^*$, the transpose or dual of $\Ad(h^{-1})$. Also, we will denote by $\langle\cdot,\cdot\rangle$ the dual evaluation form between $\fh$ and $\fh^*$. We also recall that, from the remarks at the end of subsection~\ref{subsec:Bergmanmetric}, if a group $H$ acts biholomorphically on $\DIV{n}$, then the $H$-action preserves the symplectic form $\omega$ of this domain. We now recall the definition of the moment map of a symplectic action for our setup. We refer to \cite{McDuffSalamon} for further details in the general case.

\begin{definition}\label{def:momentmap}
	Let $H$ be a Lie subgroup of the biholomorphism group of the domain $\DIV{n}$, whose Lie algebra is denoted by $\fh$. A moment map for the $H$-action is a smooth map $\mu = \mu^H : \DIV{n} \rightarrow \fh^*$ for which the following properties hold.
	\begin{enumerate}
		\item For every $X \in \fh$, the function $\mu_X : \DIV{n} \rightarrow \R$ given by 
		\[
			\mu_X(z) = \langle \mu(z), X\rangle
		\]
		satisfies $\dif \mu_X(u) = \omega(X^\sharp, u)$ for every $u$ tangent to $\DIV{n}$. 
		\item The function $\mu$ is $H$-equivariant. In other words, we have
		\[
			\mu(h\cdot z) = \Ad^*(h)(\mu(z)),
		\]
		for every $h \in H$ and $z \in \DIV{n}$.
	\end{enumerate}
\end{definition}

For a given smooth function $f : \DIV{n} \rightarrow \R$, the Hamiltonian vector field of $f$ is the smooth vector field $X_f$ that satisfies
\[
	\dif f(u) = \omega(X_f, u)
\]
for every tangent vector $u$. This definition generalizes in an obvious manner to any symplectic manifold. Hence, the first condition given in Definition~\ref{def:momentmap} requires the Hamiltonian vector field of $\mu_X$ to be $X^\sharp$, for every $X \in \fh$.

We note that if $H$ is Abelian, then $\Ad$ is the trivial homomorphism and so the second condition in Definition~\ref{def:momentmap} reduces to requiring the $H$-invariance of $\mu$ in this case. Furthermore, for $H$ Abelian, the Lie algebra $\fh$ is isomorphic to $\R^k$ where $k = \dim H$. And we can identify $\fh = \R^k$ through the choice of some basis so that $\langle\cdot,\cdot\rangle$ is precisely the inner product obtained from the chosen basis.

Let us apply the previous constructions to the maximal torus $\mrT{n} \times \SO(2)$. Its Lie algebra is given by $\ft_0 \times \so(2)$ and the Lie algebra $\ft_0$ and its elements have been described before in \eqref{eq:ft0}. Such description provides a natural basis for $\ft_0$. More precisely, for $\ell = \lfloor n/2 \rfloor$ and every $j = 1, \dots, \ell$, let us denote
\[
	X_j = \mathrm{diag}(0, \dots, R(\pi/2), \dots, \dots)
\]
the block diagonal matrix with blocks of size $2 \times 2$, with $R(\pi/2)$ appearing in the $j$-th position and a trailing $0$ at the end when $n$ is odd. As before $R(\pi/2)$ denotes the matrix defined in \eqref{eq:Rpi2}. We also consider the matrix
\[
	X_{\ell + 1} = R(\pi/2) \in \SO(2).
\]
Then, the elements $X_1, \dots, X_{\ell + 1}$ yield a basis for the Lie algebra $\ft_0 \times \so(2)$. In the rest of this work we will identify $\ft_0 \times \so(2) = \R^{\ell + 1}$ through this basis. In particular, we will refer to $X_1, \dots, X_{\ell + 1}$ as the canonical basis of $\ft_0 \times \so(2)$. Note that this yields the identification between $\ft_0 \times \so(2)$ and its dual space described above.

We proceed to compute the moment map for the action of $\mrT{n} \times \SO(2)$ on $\DIV{n}$. We start by proving an easy equivalent form to obtain a moment map in this case.

\begin{lemma}\label{lem:momentmapbasis}
	Let $\ell = \lfloor n/2\rfloor$, and consider the identification $\ft_0 \times \so(2) = \R^{\ell + 1}$ obtained from its canonical basis. For a collection of real-valued functions $f_1, \dots, f_{\ell+1}$ defined on $\DIV{n}$ consider the map $\mu : \DIV{n} \rightarrow \R^{\ell + 1}$ given by
	\[
		\mu = (f_1, \dots, f_{\ell+1}) = \sum_{j=1}^{\ell+1} f_j X_j.
	\]
	Then, $\mu$ is a moment map for the action of $\mrT{n} \times \SO(2)$ if and only if the following conditions are satisfied for every $j = 1, \dots, \ell + 1$
	\begin{enumerate}
		\item $\dif f_j = \omega(X^\sharp_j, \cdot)$,
		\item $f_j \circ t = f_j$ for every $t \in \mrT{n} \times \SO(2)$.
	\end{enumerate}
\end{lemma}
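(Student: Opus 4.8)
The plan is to unwind Definition~\ref{def:momentmap} directly, exploiting the Abelian structure of $\mrT{n} \times \SO(2)$ together with the fact that every object involved is linear in the Lie algebra variable. First I would record the two simplifications afforded by the Abelian hypothesis, both already noted in the text preceding the lemma. Since $\Ad$ is trivial on an Abelian group, condition (2) of Definition~\ref{def:momentmap} reduces to the plain invariance $\mu \circ t = \mu$ for every $t \in \mrT{n} \times \SO(2)$. And under the identification $\ft_0 \times \so(2) = \R^{\ell+1}$ by the canonical basis, the dual pairing $\langle\cdot,\cdot\rangle$ is the standard inner product, so that $\langle X_i, X_j\rangle = \delta_{ij}$.

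The heart of the argument is the equivalence concerning condition (1), and the key point I would emphasize is that both sides of the defining relation $\dif\mu_X = \omega(X^\sharp, \cdot)$ are linear in $X$. Writing $X = \sum_i c_i X_i$, the assignment $X \mapsto X^\sharp$ is linear, since $X^\sharp_z$ is the image of $X \in T_e(\mrT{n}\times\SO(2))$ under the differential at the identity of the orbit map $h \mapsto h\cdot z$. Hence $X^\sharp = \sum_i c_i X^\sharp_i$ and $\omega(X^\sharp, \cdot) = \sum_i c_i \omega(X^\sharp_i, \cdot)$. On the other hand, using $\mu = \sum_i f_i X_i$ and the orthonormality of the canonical basis, I would compute
\[
	\mu_X(z) = \langle \mu(z), X \rangle = \Big\langle \sum_i f_i(z) X_i, \sum_j c_j X_j \Big\rangle = \sum_i c_i f_i(z),
\]
so that $\dif\mu_X = \sum_i c_i \dif f_i$. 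Thus the defining relation for a given $X$ reads exactly $\sum_i c_i \dif f_i = \sum_i c_i \omega(X^\sharp_i, \cdot)$.

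It then remains to observe that requiring this identity for all $X$ (equivalently, all coefficient vectors $(c_i)$) is equivalent to requiring it for each basis element separately, i.e.\ to condition (1) of the statement: the forward implication follows by taking $c_i = \delta_{ij}$, and the converse by forming linear combinations. Similarly, condition (2) of the statement, namely $f_j \circ t = f_j$ for every $j$, is precisely the componentwise form of the invariance $\mu\circ t = \mu$ identified above as the reduction of Definition~\ref{def:momentmap}(2). Combining the two equivalences yields the lemma. I expect no substantial obstacle; the only points requiring care are verifying the linearity of $X\mapsto X^\sharp$ (immediate from its description as a differential of the action) and keeping track of the identification of $\ft_0 \times \so(2)$ with its dual through the orthonormal canonical basis.
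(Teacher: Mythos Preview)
Your proposal is correct and follows essentially the same approach as the paper's own proof: both compute $\mu_X = \sum_j \theta_j f_j$ from the identification of $\ft_0 \times \so(2)$ with $\R^{\ell+1}$ via the canonical basis, and then reduce the equivalence to the linear dependence of $\mu_X$ and $X^\sharp$ on $X$. The paper's proof is considerably more terse, but the substance is the same.
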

\begin{proof}
	Given our current identifications and from the definition of $\mu$ we have
	\[
		\mu_X = \sum_{j=1}^{\ell+1} \theta_j f_j
	\]
	for every $X = \theta_1 X_1 + \dots + \theta_{\ell+1} X_{\ell+1}$. Hence, conditions (1) and (2) from our statement are precisely the conditions from Definition~\ref{def:momentmap} for the canonical basis. Then, the claimed equivalence follows from the linear dependence of $\mu_X$ and $X^\sharp$ as functions of $X$.
\end{proof}

The next step is to compute the vector fields on $\DIV{n}$ associated to the elements of $\ft_0 \times \so(2)$.

\begin{lemma}\label{lem:Xsharp}
	Let $\ell = \lfloor n/2\rfloor$, and consider the identification $\ft_0 \times \so(2) = \R^{\ell + 1}$ obtained from its canonical basis. For every $X \in \ft_0 \times \so(2)$ of the form
	\[
		X = \sum_{j=1}^{\ell+1} \theta_j X_j
	\]
	the induced vector field $X^\sharp$ on $\DIV{n}$ is given by
	\begin{multline*}
		X^\sharp_z = \sum_{j=1}^{\ell}
			\theta_j
			\bigg(
				z_{2j} \frac{\partial}{\partial z_{2j-1}}
				-z_{2j-1} \frac{\partial}{\partial z_{2j}}
				+\overline{z}_{2j} \frac{\partial}{\partial \overline{z}_{2j-1}}
				-\overline{z}_{2j-1} \frac{\partial}{\partial \overline{z}_{2j}}
			\bigg) \\
			+ i \theta_{\ell+1}\sum_{j=1}^{n}
			\bigg(
				z_j \frac{\partial}{\partial z_j}
				-\overline{z}_j \frac{\partial}{\partial \overline{z}_j}
			\bigg),
	\end{multline*}
	for every $z \in \DIV{n}$. In particular, we have for every $j = 1, \dots, \ell$
	\[
		X^\sharp_j(z) = z_{2j} \frac{\partial}{\partial z_{2j-1}}
		-z_{2j-1} \frac{\partial}{\partial z_{2j}}
		+\overline{z}_{2j} \frac{\partial}{\partial \overline{z}_{2j-1}}
		-\overline{z}_{2j-1} \frac{\partial}{\partial \overline{z}_{2j}}
	\]
	and also
	\[
		X^\sharp_{\ell+1}(z) =
		i \sum_{j=1}^{n}
		\bigg(
		z_j \frac{\partial}{\partial z_j}
		-\overline{z}_j \frac{\partial}{\partial \overline{z}_j}
		\bigg),
	\]
	for every $z \in \DIV{n}$.
\end{lemma}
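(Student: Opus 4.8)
The plan is to exploit the $\R$-linearity of the assignment $X \mapsto X^\sharp$ and thereby reduce the whole computation to the basis elements $X_1, \dots, X_{\ell+1}$. Since the action \eqref{eq:SOnSO2Action} is given by $(A,t)\cdot z = tAz$, which is (complex) linear in $z$, the map sending a Lie algebra element to its induced fundamental vector field is $\R$-linear. Consequently it suffices to establish the two displayed formulas for $X^\sharp_j$ (with $1 \le j \le \ell$) and for $X^\sharp_{\ell+1}$; the general formula for $X = \sum_j \theta_j X_j$ then follows immediately by summing.

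First I would handle the toral directions $X_j$ with $1 \le j \le \ell$. For these the associated one-parameter subgroup lies in $\mrT{n} \times \{1\}$ and acts by $z \mapsto \exp(rX_j)z$, where $\exp$ denotes the matrix exponential. Differentiating at $r = 0$ gives the $\C^n$-valued function $X^\sharp_z = X_j z$. Since $X_j$ is the block-diagonal matrix with $R(\pi/2)$ (see \eqref{eq:Rpi2}) in its $j$-th $2\times 2$ block and zero elsewhere, the product $X_j z$ has $(2j-1)$-component equal to $z_{2j}$ and $(2j)$-component equal to $-z_{2j-1}$, with all other components vanishing. Substituting $g_{2j-1} = z_{2j}$ and $g_{2j} = -z_{2j-1}$ into \eqref{eq:Xsharpinpartials} yields exactly the stated expression for $X^\sharp_j$.

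Next I would treat the $\SO(2)$ direction $X_{\ell+1} = R(\pi/2)$. The key point here is the identification \eqref{eq:TSO(2)}: the one-parameter subgroup $r \mapsto \exp(rR(\pi/2)) = R(r)$ corresponds under $\SO(2) \simeq \T$ to $r \mapsto e^{ir}$, which acts on $\DIV{n}$ by scalar multiplication $z \mapsto e^{ir}z$. Differentiating at $r = 0$ gives $X^\sharp_z = iz$, so in the notation of \eqref{eq:Xsharpinpartials} we have $g_k = iz_k$ for every $k$. The only thing to watch is that the antiholomorphic part contributes $\overline{g}_k = \overline{iz_k} = -i\overline{z}_k$, which produces the relative sign; collecting the terms gives the claimed expression for $X^\sharp_{\ell+1}$.

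The computation is essentially mechanical, and the only place requiring genuine care is this last identification of $R(\pi/2)$ with the complex scalar $i$ together with the resulting conjugation in the $\partial/\partial\overline{z}_k$ terms. Beyond that bookkeeping, $\R$-linearity in $X$ assembles the two cases into the general formula, so I do not anticipate a real obstacle.
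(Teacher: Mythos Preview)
Your proposal is correct and follows essentially the same approach as the paper: both compute $\exp(rX)\cdot z$ explicitly and differentiate at $r=0$, then invoke \eqref{eq:Xsharpinpartials}. The only difference is organizational---the paper writes out the full flow $\exp(rX)\cdot z$ for a general $X=\sum_j\theta_jX_j$ as one column vector and differentiates all coordinates at once, whereas you first invoke linearity of $X\mapsto X^\sharp$ and handle each basis vector separately; note that this linearity holds for \emph{any} smooth action (it is the differential at the identity of $g\mapsto g\cdot z$), not just because the action is linear in $z$.
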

\begin{proof}
	We will follow the notation from Section~\ref{sec:isotypic}. In particular, we will use the expressions given by \eqref{eq:Rvartheta} and \eqref{eq:Atheta}.
	
	As it is well known, we have $\exp(\vartheta R(\pi/2)) = R(\vartheta)$, for every $\vartheta \in \R$. Hence, for every $\theta \in \R^{\ell+1}$ the element $X$ of $\ft_0 \times \so(2)$ given by
	\[
		X = \sum_{j=1}^{\ell+1} \theta_j X_j
	\]
	satisfies for every $r \in \R$
	\[
		\exp(rX) = (A(r\theta'), e^{ir\theta_{\ell+1}}),
	\]
	where $\theta'$ is obtained from $\theta$ by removing the last component. It follows that for $n = 2\ell + 1$ odd, we have
	\[
		\exp(rX) \cdot z = e^{ir\theta_{\ell+1}}
			\begin{pmatrix}
				z_1 \cos(r\theta_1) + z_2 \sin(r\theta_1) \\
				-z_1 \sin(r\theta_1) + z_2 \cos(r\theta_1) \\
				\vdots \\
				z_{2j-1} \cos(r\theta_j) + z_{2j} \sin(r\theta_j) \\
				-z_{2j-1} \sin(r\theta_j) + z_{2j} \cos(r\theta_j) \\
				\vdots \\
				z_{2\ell-1} \cos(r\theta_\ell) + z_{2\ell} \sin(r\theta_\ell) \\
				-z_{2\ell-1} \sin(r\theta_\ell) + z_{2\ell} \cos(r\theta_\ell) \\
				z_{2\ell +1}
			\end{pmatrix},
	\]
	for every $z \in \DIV{n}$. The case of $n = 2\ell$ even is obtained from this expression by removing the last coordinate. Hence, we conclude the result by differentiating the previous expression with respect to $r$ at $0$ and applying \eqref{eq:Xsharpinpartials}.	
\end{proof}

The next result yields a moment map for the action of $\mrT{n} \times \SO(2)$.

\begin{theorem}\label{thm:momentmapTnSO(2)}
	For every $n \geq 3$, and with respect to the identification $\ft_0 \times \so(2) = \R^{\ell+1}$, where $\ell = \lfloor n/2 \rfloor$, a moment map for the action of $\mrT{n} \times \SO(2)$ on $\DIV{n}$ is given by
	\begin{align*}
		\mu = \mu^{\mrT{n} \times \SO(2)} : \DIV{n} &\rightarrow \R^{\ell+1} \\
		\mu(z) &= \frac{1}{\Delta(z)} 
			\sum_{j=1}^\ell i(\overline{z}_{2j-1} z_{2j} 
				- z_{2j-1} \overline{z}_{2j}) e_j 
			+ \frac{|z^\top z|^2 - |z|^2}{\Delta(z)} e_{\ell+1},
	\end{align*}
	for every $z \in \DIV{n}$, where $e_1, \dots, e_{\ell+1}$ is the canonical basis of $\R^{\ell+1}$, and $\Delta(z) = 1 + |z^\top z|^2 - 2|z|^2$.
\end{theorem}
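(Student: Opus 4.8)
The plan is to apply Lemma~\ref{lem:momentmapbasis}, i.e. to verify that the two scalar conditions hold for the claimed components
\[
f_j(z) = \frac{i(\overline{z}_{2j-1}z_{2j} - z_{2j-1}\overline{z}_{2j})}{\Delta(z)}\quad(1\le j\le \ell), \qquad f_{\ell+1}(z) = \frac{|z^\top z|^2 - |z|^2}{\Delta(z)}.
\]
Condition~(2), the $\mrT{n}\times\SO(2)$-invariance of each $f_j$, is the easy part. The function $\Delta$ is $\SO(n)\times\SO(2)$-invariant (as recorded at the start of Section~\ref{sec:isotypic}); each numerator $\overline{z}_{2j-1}z_{2j} - z_{2j-1}\overline{z}_{2j}$ is left unchanged by the real rotation $R(\theta_j)$ acting in its own $2\times 2$ block and is killed by the scalar $\SO(2)$-factor, while $|z^\top z|^2$ and $|z|^2$ are manifestly $\SO(n)\times\SO(2)$-invariant. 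Hence the work is concentrated in condition~(1), namely $\dif f_j = \omega(X^\sharp_j,\cdot)$.

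The clean way I would establish condition~(1) is to use the K\"ahler potential instead of contracting the explicit $\omega$ of Proposition~\ref{prop:SympForm} coefficient by coefficient. Writing $\Phi = \frac{1}{2n}\log K(z,z) = -\frac{1}{2}\log\Delta$ for the potential underlying Proposition~\ref{prop:BergmanMetric}, we have $\omega = i\partial\bar\partial\Phi = \dif\beta$ with the real primitive $\beta = \tfrac{i}{2}(\bar\partial\Phi - \partial\Phi)$. Since the $\mrT{n}\times\SO(2)$-action is holomorphic and preserves $\Delta$, it preserves $\Phi$ and commutes with $\partial$ and $\bar\partial$, so $\beta$ is an invariant $1$-form and $\mathcal{L}_{X^\sharp}\beta = 0$ for every $X \in \ft_0\times\so(2)$. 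Cartan's formula $\mathcal{L}_{X^\sharp}\beta = \dif\iota_{X^\sharp}\beta + \iota_{X^\sharp}\dif\beta$ then yields $\iota_{X^\sharp}\omega = -\dif(\iota_{X^\sharp}\beta)$, so that, with the sign convention of Definition~\ref{def:momentmap}, the function $\mu_X := -\beta(X^\sharp)$ automatically satisfies both $\dif\mu_X = \omega(X^\sharp,\cdot)$ and the required invariance. Thus this single device handles condition~(1) and re-proves condition~(2) for free.

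It then remains to evaluate $-\beta(X^\sharp_j)$ on the vector fields of Lemma~\ref{lem:Xsharp}. From $\partial_{z_a}\Delta = -2(\overline{z}_a - z_a\overline{(z^\top z)})$, already extracted in the proof of Proposition~\ref{prop:BergmanMetric}, one obtains
\[
\beta = \frac{i}{2\Delta}\sum_{a=1}^n \Big[(z_a - \overline{z}_a(z^\top z))\,\dif\overline{z}_a - (\overline{z}_a - z_a\overline{(z^\top z)})\,\dif z_a\Big].
\]
Substituting $X^\sharp_j = z_{2j}\partial_{z_{2j-1}} - z_{2j-1}\partial_{z_{2j}} + \overline{z}_{2j}\partial_{\overline{z}_{2j-1}} - \overline{z}_{2j-1}\partial_{\overline{z}_{2j}}$, the contributions carrying the factors $(z^\top z)$ and $\overline{(z^\top z)}$ cancel in pairs, leaving exactly $f_j$; substituting $X^\sharp_{\ell+1} = i\sum_a(z_a\partial_{z_a} - \overline{z}_a\partial_{\overline{z}_a})$ and using $\sum_a z_a^2 = z^\top z$, $\sum_a \overline{z}_a^2 = \overline{(z^\top z)}$ and $\sum_a |z_a|^2 = |z|^2$ produces $f_{\ell+1}$.

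The only genuine obstacle is bookkeeping the cancellations in $\beta(X^\sharp_j)$, and the primitive $\beta$ is exactly the device that renders them transparent: the $(z^\top z)$-terms pair off before any differentiation of the quotient $1/\Delta$ is ever needed. The naive alternative, contracting the explicit coefficients $g_{jk}$ of Proposition~\ref{prop:SympForm} directly and then independently computing $\dif f_j$ of the quotients, is in principle equivalent but considerably more error-prone, since it forces one to reconcile the $\partial_{\overline{z}_k}$ derivatives of $\Delta^{-1}$ against the rank-one off-diagonal part of $g_{jk}$; I would avoid that route in favor of the potential argument above.
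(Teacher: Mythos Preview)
Your proof is correct, and it takes a genuinely different route from the paper's. Both arguments start from Lemma~\ref{lem:momentmapbasis}, but diverge on how to verify $\dif f_j = \omega(X^\sharp_j,\cdot)$. The paper contracts $X^\sharp_j$ into the explicit $\omega$ of Proposition~\ref{prop:SympForm}, obtains expressions like $i\big(z_{2j}g_{2j-1,k}-z_{2j-1}g_{2j,k}\big)$ for the $\dif\overline{z}_k$-coefficient, and then exploits the identity $g_{lk}=\partial_{\overline{z}_k}\big((\overline{z}_l - z_l\overline{(z^\top z)})/\Delta\big)$ from the proof of Proposition~\ref{prop:BergmanMetric} to pull the holomorphic prefactors under the $\partial_{\overline{z}_k}$ and recognize the result as $\partial f_j/\partial\overline{z}_k$; so the paper is not quite the ``naive alternative'' you warn against, but it still matches the two sides coefficient by coefficient. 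Your argument instead uses the invariant primitive $\beta=\tfrac{i}{2}(\bar\partial\Phi-\partial\Phi)$ of $\omega$ and Cartan's formula to get $\dif\big(-\beta(X^\sharp_j)\big)=\omega(X^\sharp_j,\cdot)$ for free, reducing the whole problem to the purely algebraic evaluation $-\beta(X^\sharp_j)=f_j$, with no differentiation of quotients at all and with invariance coming along automatically. What your approach buys is conceptual clarity and portability (it is the standard mechanism for producing moment maps on K\"ahler manifolds whose potential is preserved by the action); what the paper's approach buys is self-containment, since it never invokes Lie derivatives or Cartan's formula and stays entirely within the coordinate computations already set up in Section~\ref{sec:DIV}.
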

\begin{proof}
	Let us consider the functions $f_j : \DIV{n} \rightarrow \R$, for $j =1, \dots, \ell+1$, given by
	\begin{align*}
		f_j(z) &= \frac{i(\overline{z}_{2j-1} z_{2j} 
			- z_{2j-1} \overline{z}_{2j})}{\Delta(z)}, \quad \text{ for $j =1, \dots, \ell$,} \\
		f_{\ell+1}(z) &= \frac{|z^\top z|^2 - |z|^2}{\Delta(z)}.
	\end{align*}
	By Lemma~\ref{lem:momentmapbasis}, it is enough to show that these functions are $\mrT{n} \times \SO(2)$-invariant and that
	\[
		\dif f_j = \omega(X^\sharp_j, \cdot),
	\]
	for every $j =1 \dots, \ell+1$.
	
	Let us consider the invariance property. We observe that the expressions $\Delta(z)$ and $|z^\top z|^2 - |z|^2$ are both invariant under the group $\mrT{n} \times \SO(2)$. Hence, it is enough to show that the functions defined on $\DIV{n}$ by
	\[
		\widehat{f}_j(z) = \overline{z}_{2j-1} z_{2j} 
		- z_{2j-1} \overline{z}_{2j}
	\]
	are $\mrT{n} \times \SO(2)$-invariant for every $j =1, \dots, \ell$. It is clear that these functions are $\SO(2)$-invariant. Let us consider $A(\theta) \in \mrT{n}$ for any given $\theta \in \R^\ell$. Then, for every $j = 1, \dots, \ell$ we have
	\begin{align*}
		\widehat{f}_j(A(\theta) z) =\; 
		&\overline{(z_{2j-1} \cos \theta_j + z_{2j} \sin \theta_j)} 
		(-z_{2j-1} \sin \theta_j + z_{2j} \cos \theta_j) \\
		&- (z_{2j-1} \cos \theta_j + z_{2j} \sin \theta_j) 
		\overline{(-z_{2j-1} \sin \theta_j + z_{2j} \cos \theta_j)} \\
		=\;& (|z_{2j}|^2 -|z_{2j-1}|^2) \cos \theta_j \sin \theta_j
			+ \overline{z}_{2j-1} z_{2j} \cos^2 \theta_j
			- z_{2j-1} \overline{z}_{2j} \sin^2 \theta_j \\
		&+ (|z_{2j-1}|^2 - |z_{2j}|^2) \cos \theta_j \sin \theta_j
			- z_{2j-1} \overline{z}_{2j} \cos^2 \theta_j
			+ \overline{z}_{2j-1} z_{2j} \sin^2 \theta_j \\
		=\;& \overline{z}_{2j-1} z_{2j} 
				- z_{2j-1} \overline{z}_{2j} = \widehat{f}_j(z),
	\end{align*}
	and so the required invariance follows.
	
	It remains to show that condition (1) from Lemma~\ref{lem:momentmapbasis} is satisfied by the given functions. We start by computing the values of the $1$-forms $\omega(X^\sharp_j,\cdot)$. Without being explicit about the values of the coefficients $g_{jk}$ obtained in Proposition~\ref{prop:SympForm}, we have
	\[
		\omega(X^\sharp_j, \cdot) 
			= i\sum_{k,l=1}^n g_{kl} \dif z_k \wedge \dif \overline{z}_l (X^\sharp_j, \cdot),
	\]
	for every $j = 1, \dots, \ell+1$. After replacing the values of $X^\sharp_j$ from Lemma~\ref{lem:Xsharp} we obtain the following expressions where $j =1, \dots, \ell$
	\begin{align}
		\omega(X^\sharp_j, \cdot) =\;& 
			\sum_{k=1}^n i(\overline{z}_{2j-1} g_{k,2j}(z)
				-\overline{z}_{2j} g_{k,2j-1}(z)) \dif z_k 
					\label{eq:omegaXj} \\
		&+ 	\sum_{k=1}^n i(z_{2j} g_{2j-1,k}(z)
				-z_{2j-1} g_{2j,k}(z)) \dif \overline{z}_k, \notag \\	
		\omega(X^\sharp_{\ell +1}, \cdot) =\;&
			- \sum_{k,l=1}^n (\overline{z}_l g_{kl}(z) \dif z_k
				+ z_l g_{lk}(z) \dif \overline{z}_k).	\label{eq:omegaXl1}
	\end{align}
	
	Next, we compute $\dif f_j$, for every $j =1, \dots, \ell+1$. In this case we note that from the proof of Proposition~\ref{prop:BergmanMetric} we have
	\begin{equation}\label{eq:glkpartialozk}
		g_{lk}(z) = \frac{\partial}{\partial \overline{z}_k}
			\bigg(
				\frac{\overline{z}_l - z_l \overline{(z^\top z)}}{\Delta(z)}
			\bigg).
	\end{equation}
	From this it follows that for every $j =1, \dots, \ell$ and $k =1, \dots, n$ we have
	\begin{align*}
		i(z_{2j} g_{2j-1,k}(z) &- z_{2j-1} g_{2j,k}(z)) = \\
			=\;& i z_{2j} 
				\frac{\partial}{\partial \overline{z}_k}
				\bigg(
				\frac{\overline{z}_{2j-1} - z_{2j-1} \overline{(z^\top z)}}{\Delta(z)}
				\bigg) -
				i z_{2j-1} 
				\frac{\partial}{\partial \overline{z}_k}
				\bigg(
				\frac{\overline{z}_{2j} - z_{2j} \overline{(z^\top z)}}{\Delta(z)}
				\bigg) \\
			=\;&  
				\frac{\partial}{\partial \overline{z}_k}
				\bigg(i z_{2j}
				\frac{\overline{z}_{2j-1} - z_{2j-1} \overline{(z^\top 	z)}}{\Delta(z)}
				-i z_{2j-1}
				\frac{\overline{z}_{2j} - z_{2j} \overline{(z^\top z)}}{\Delta(z)}
				\bigg) \\
			=\;& \frac{\partial}{\partial \overline{z}_k}
				\bigg(
				\frac{i z_{2j}\overline{z}_{2j-1} -i z_{2j-1}\overline{z}_{2j}}{\Delta(z)}
				\bigg) = \frac{\partial f_j}{\partial \overline{z}_k}(z).
	\end{align*}
	Since $f_j$ is real-valued we also have
	\[
		\frac{\partial f_j}{\partial z_k}(z) = 
			i(\overline{z}_{2j-1} g_{k,2j}(z)
				- \overline{z}_{2j} g_{k,2j-1}(z)),
	\]
	where we have used the Hermitian symmetry of the coefficients $g_{kl}$. The computation of these partial derivatives together with \eqref{eq:omegaXj} prove that
	\[
		\dif f_j = \omega(X^\sharp_j,\cdot),
	\]
	for every $j = 1, \dots, \ell$. 
	
	For the remaining case, we note that \eqref{eq:glkpartialozk} yields
	\[
		-z_l g_{lk}(z) = \frac{\partial}{\partial \overline{z}_k}
		\bigg(
		\frac{\overline{z}_l - z_l \overline{(z^\top z)}}{\Delta(z)}
		(-z_l)
		\bigg),
	\]
	for every $k,l=1, \dots, n$. On the other hand, we have
	\[
		\sum_{l=1}^n \frac{\overline{z}_l - z_l \overline{(z^\top z)}}{\Delta(z)}
		(-z_l) 
		= \sum_{l=1}^n \frac{z_l^2 \overline{(z^\top z)} - |z_l|^2 }{\Delta(z)}
		= \frac{|z^\top z|^2  - |z|^2}{\Delta(z)} = f_{\ell+1}(z).
	\]
	These computations show that 
	\[
		\frac{\partial f_{\ell+1}}{\partial \overline{z}_k}(z) =
		-\sum_{l=1}^n z_l g_{lk}(z),
	\]
	and since $f_{\ell+1}$ is real-valued we also have
	\[
		\frac{\partial f_{\ell+1}}{\partial z_k}(z) =
		-\sum_{l=1}^n \overline{z}_l g_{kl}(z).
	\]
	Hence, using \eqref{eq:omegaXl1} we conclude that
	\[
		\dif f_{\ell+1} = \omega(X^\sharp_{\ell+1},\cdot).
	\]
	And this completes the proof.
\end{proof}

We note that the Lie algebra $\so(2)$ naturally identifies with $\R$ through its basic element $X_{\ell+1}$ defined above. Hence, the computations from Theorem~\ref{thm:momentmapTnSO(2)} together with Lemma~\ref{lem:momentmapbasis} yield the next result. The last claim follows from direct inspection.

\begin{corollary}\label{cor:momentmapSO(2)}
	For every $n \geq 3$, and with respect to the identification $\so(2) = \R$, a moment map for the action of $\SO(2)$ on $\DIV{n}$ is given by
	\begin{align*}
		\mu = \mu^{\SO(2)} : \DIV{n} &\rightarrow \R \\
		\mu(z) &= \frac{|z^\top z|^2 - |z|^2}{1 + |z^\top z|^2 - 2|z|^2},
	\end{align*}
	for every $z \in \DIV{n}$. Furthermore, this moment map is $\SO(n) \times \SO(2)$-invariant.
\end{corollary}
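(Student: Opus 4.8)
The plan is to obtain $\mu^{\SO(2)}$ as the last component of the moment map $\mu^{\mrT{n} \times \SO(2)}$ computed in Theorem~\ref{thm:momentmapTnSO(2)}. Under the identification $\so(2) = \R$ via the basis element $X_{\ell+1}$, a smooth function $\mu : \DIV{n} \to \R$ is a moment map for the $\SO(2)$-action precisely when it satisfies the one-dimensional version of the conditions in Lemma~\ref{lem:momentmapbasis}, namely $\dif \mu = \omega(X^\sharp_{\ell+1}, \cdot)$ together with the $\SO(2)$-invariance of $\mu$. Since $\SO(2)$ is Abelian, the equivariance condition (2) of Definition~\ref{def:momentmap} reduces to invariance, so these two conditions are indeed sufficient.

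Both required conditions have already been verified in the proof of Theorem~\ref{thm:momentmapTnSO(2)} for the function
\[
	f_{\ell+1}(z) = \frac{|z^\top z|^2 - |z|^2}{\Delta(z)}.
\]
Indeed, there we showed $\dif f_{\ell+1} = \omega(X^\sharp_{\ell+1}, \cdot)$ using the expression \eqref{eq:omegaXl1} for $\omega(X^\sharp_{\ell+1}, \cdot)$ and the partial derivative computation based on \eqref{eq:glkpartialozk}. We also showed that $f_{\ell+1}$ is $\mrT{n} \times \SO(2)$-invariant, and in particular $\SO(2)$-invariant. Hence $f_{\ell+1}$ is a moment map for the $\SO(2)$-action, which gives the claimed formula since $\Delta(z) = 1 + |z^\top z|^2 - 2|z|^2$.

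For the final assertion, the plan is a direct inspection of the defining expression. Recall from \eqref{eq:SOnSO2Action} that $(A,t) \cdot z = tAz$ with $A \in \SO(n)$ real orthogonal and $t \in \T$ acting as a unit scalar. Since $A^\top A = I$, we have $(Az)^\top(Az) = z^\top z$ and $|Az|^2 = |z|^2$, while $|t| = 1$ gives $|(tz)^\top(tz)| = |z^\top z|$ and $|tz|^2 = |z|^2$. Therefore both the numerator $|z^\top z|^2 - |z|^2$ and the denominator $\Delta(z)$ are $\SO(n) \times \SO(2)$-invariant (the latter was already noted at the start of Section~\ref{sec:isotypic}), and so is their quotient $\mu^{\SO(2)}$.

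There is no genuine obstacle here: the result is a transparent specialization of Theorem~\ref{thm:momentmapTnSO(2)}. The only point worth flagging is the functoriality being used, namely that the component of a torus moment map along a one-parameter subgroup is itself a moment map for that subgroup. This is immediate in the present setup because the dual of the inclusion $\so(2) \hookrightarrow \ft_0 \times \so(2)$ simply selects the $(\ell+1)$-st coordinate, so condition (1) of Definition~\ref{def:momentmap} for $\SO(2)$ is exactly the $X = X_{\ell+1}$ instance of the same condition for the full torus.
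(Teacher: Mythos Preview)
Your proof is correct and follows essentially the same approach as the paper: the paper's proof simply states that the result follows from the computations in Theorem~\ref{thm:momentmapTnSO(2)} together with Lemma~\ref{lem:momentmapbasis}, and that the $\SO(n)\times\SO(2)$-invariance follows by direct inspection. You have spelled out exactly these steps in more detail, including the explicit verification of the invariance, so there is nothing substantively different.
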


\section{Toeplitz operators with moment maps symbols for $\SO(2)$} \label{sec:Toeplitzmomentmap}
In this section we consider special symbols obtained from moment maps. The most general definition that we will use is the following.

\begin{definition}\label{def:momentmapsymbol}
	Let $H$ be an Abelian subgroup of the biholomorphism group of $\DIV{n}$. Assume that $H$ has a moment map $\mu^H : \DIV{n} \rightarrow \fh^*$, where $\fh^*$ is the dual space of the Lie algebra of $H$. Then, a moment map symbol for $H$ or a $\mu^H$-symbol is an essentially bounded function $a \in L^\infty(\DIV{n})$ for which there is some measurable function $f$ so that $a = f\circ \mu^H$. The space of all such $\mu^H$-symbols will be denoted by $L^\infty(\DIV{n})^{\mu^H}$.
\end{definition}

\subsection{Moment map symbols for $\SO(2)$ and commutative $C^*$-algebras}
The main goal of this section, and the work itself, is to construct commutative $C^*$-algebras generated by Toeplitz operators from the subgroup $\SO(2)$ by using moment map symbols instead of invariant symbols. In other words, we use $\mu^{\SO(2)}$-symbols to obtain commutative $C^*$-algebras. That is the content of the next result. One should compare this with Theorem~\ref{thm:comm-invariance} where it is proved that $\SO(2)$-invariant symbols do not yield commuting Toeplitz operators.

\begin{theorem}\label{thm:momentmapSO(2)Toeplitz}
	Let $n \geq 3$ and  $\lambda > n-1$ be arbitrarily given. Then, the $C^*$-algebra $\cT^{(\lambda)}(L^\infty(\DIV{n})^{\mu^{\SO(2)}})$ generated by Toeplitz operators with moment map symbols for $\SO(2)$ is commutative.
	Furthermore, if $a = f \circ \mu^{\SO(2)}$ is a moment map symbol for $\SO(2)$, then,  with the notation from Proposition~\ref{prop:isoSO(n)SO(2)}, the Toeplitz operator $T^{(\lambda)}_a$ preserves the Hilbert direct sum decomposition
	\[
		\cA^2_\lambda(\DIV{n})
		= \bigoplus_{m=0}^{\infty}
		\bigoplus_{\substack{k_1, k_2 \in \N \\ k_1 + 2k_2 = m}} \HH^{k_1}(\C^n) (z^\top z)^{k_2}.
	\]	
	and there exist a sequence $(c_{k_1, k_2}(f,\lambda))_{(k_1, k_2) \in \N^2}$ of complex numbers such that
	\[
		T^{(\lambda)}_a|_{\HH^{k_1}(\C^n) (z^\top z)^{k_2}} = 
		c_{k_1, k_2}(f,\lambda) I_{\HH^{k_1}(\C^n) (z^\top z)^{k_2}},
	\]
	for every $(k_1, k_2) \in \N^2$. In other words, $T^{(\lambda)}_a$ acts by a multiple of the identity on each term.
\end{theorem}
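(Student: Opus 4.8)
The key structural observation is contained in the last sentence of Corollary~\ref{cor:momentmapSO(2)}: the moment map $\mu^{\SO(2)}$ is not merely $\SO(2)$-invariant but in fact $\SO(n)\times\SO(2)$-invariant. The plan is to exploit this stronger invariance to reduce the commutativity and diagonalization statements to the already-established multiplicity-freeness of the $\SO(n)\times\SO(2)$-isotypic decomposition (Proposition~\ref{prop:isoSO(n)SO(2)}). First I would note that for any moment map symbol $a = f\circ\mu^{\SO(2)}$, the $\SO(n)\times\SO(2)$-invariance of $\mu^{\SO(2)}$ immediately implies that $a$ is itself $\SO(n)\times\SO(2)$-invariant, since $a\circ h^{-1} = f\circ\mu^{\SO(2)}\circ h^{-1} = f\circ\mu^{\SO(2)} = a$ for every $h \in \SO(n)\times\SO(2)$. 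In symbols, we have the inclusion
\[
	L^\infty(\DIV{n})^{\mu^{\SO(2)}} \subset L^\infty(\DIV{n})^{\SO(n)\times\SO(2)}.
\]

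From this inclusion the commutativity follows at once. Indeed, by the inclusion of generating symbol spaces we obtain
\[
	\cT^{(\lambda)}(L^\infty(\DIV{n})^{\mu^{\SO(2)}}) \subset \cT^{(\lambda)}(L^\infty(\DIV{n})^{\SO(n)\times\SO(2)}),
\]
and the right-hand $C^*$-algebra is commutative by part (1) of Theorem~\ref{thm:comm-invariance}. A $C^*$-subalgebra of a commutative $C^*$-algebra is commutative, so the first assertion is established. For the diagonalization statement, I would invoke Proposition~\ref{prop:HinvHinter} applied to $H = \SO(n)\times\SO(2)$: since $a$ is $\SO(n)\times\SO(2)$-invariant, the Toeplitz operator $T^{(\lambda)}_a$ intertwines $\pi_\lambda|_{\SO(n)\times\SO(2)}$, i.e. $T^{(\lambda)}_a \in \End_{\SO(n)\times\SO(2)}(\cA^2_\lambda(\DIV{n}))$. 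Because the isotypic decomposition of Proposition~\ref{prop:isoSO(n)SO(2)} is multiplicity-free, each isotypic component $\HH^{k_1}(\C^n)(z^\top z)^{k_2}$ is irreducible over $\SO(n)\times\SO(2)$, so an intertwining operator must preserve each component and act on it by a scalar. Schur's lemma then yields constants $c_{k_1,k_2}(f,\lambda) \in \C$ with $T^{(\lambda)}_a|_{\HH^{k_1}(\C^n)(z^\top z)^{k_2}} = c_{k_1,k_2}(f,\lambda)\,I$, exactly as claimed. This is essentially the same argument used to derive $(1)\Rightarrow(2)$ in the proof of Theorem~\ref{thm:multfree-comm}.

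The part requiring the most care is not any of these representation-theoretic deductions, which are routine once the inclusion is in place, but rather verifying cleanly that the inclusion is the right mechanism and that the scalars are well defined independently of choices. I would make sure to emphasize that the genuinely nontrivial input is Corollary~\ref{cor:momentmapSO(2)}'s claim of $\SO(n)\times\SO(2)$-invariance of $\mu^{\SO(2)}$ --- this is the ``peculiar relationship'' between $\SO(n)\times\SO(2)$ and $\SO(2)$ alluded to in the introduction, and it is what makes the small group $\SO(2)$ behave, through its moment map, as though it were the full isotropy group. The contrast to highlight is that $\SO(2)$-\emph{invariant} symbols (Proposition~\ref{prop:isoSO(2)} and Theorem~\ref{thm:comm-invariance}) yield a non-commutative algebra, whereas the strictly smaller family of $\mu^{\SO(2)}$-symbols lands inside the much more rigid space $L^\infty(\DIV{n})^{\SO(n)\times\SO(2)}$ and thereby recovers commutativity and simultaneous diagonalization.
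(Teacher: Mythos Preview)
Your proposal is correct and follows essentially the same route as the paper's own proof: use the $\SO(n)\times\SO(2)$-invariance of $\mu^{\SO(2)}$ from Corollary~\ref{cor:momentmapSO(2)} to obtain the inclusion of symbol spaces, deduce commutativity from Theorem~\ref{thm:comm-invariance}(1), and then apply Proposition~\ref{prop:HinvHinter} together with the multiplicity-freeness in Proposition~\ref{prop:isoSO(n)SO(2)} and Schur's lemma to get the scalar action on each component. There is nothing to add or correct.
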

\begin{proof}
	By Corollary~\ref{cor:momentmapSO(2)} the moment map $\mu^{\SO(2)}$ for the $\SO(2)$-action is invariant under the action of the group $\SO(n)\times \SO(2)$. It follows that any given symbol of the form $a = f \circ \mu^{\SO(2)}$ (a moment map symbol for $\SO(2)$) is $\SO(n) \times \SO(2)$-invariant. This implies the inclusion
	\[
		\cT^{(\lambda)}(L^\infty(\DIV{n})^{\mu^{\SO(2)}}) \subset 
		\cT^{(\lambda)}(L^\infty(\DIV{n})^{\SO(n) \times \SO(2)}),
	\]
	for every $\lambda > n-1$, and the commutativity of $\cT^{(\lambda)}(L^\infty(\DIV{n})^{\mu^{\SO(2)}})$ follows from Proposition~\ref{thm:comm-invariance}.
	
	On the other hand, for the symbol $a = f \circ \mu^{\SO(2)}$ as above, Proposition~\ref{prop:HinvHinter} implies that $T^{(\lambda)}_a$ intertwines the action of $\SO(n) \times \SO(2)$ and so it preserves the Hilbert direct sum of $\cA^2_\lambda(\DIV{n})$ in our statement, because it is the corresponding isotypic decomposition obtained in Proposition~\ref{prop:isoSO(n)SO(2)}. Since such decomposition is multiplicity-free, the existence of the sequence follows from Schur's Lemma.
\end{proof}

In the rest of this work we will compute the coefficients from Theorem~\ref{thm:momentmapSO(2)Toeplitz}.

\subsection{The Jordan algebra associated to $\DIV{n}$}
\label{subsec:Jordan}
It is well known that any tube-type bounded symmetric domain has an associated complex Jordan algebra that one can use to define several analytic and geometric properties. We refer to \cite{LoosBSDJ,Upmeier1996} for further details on the general theory. For example, in \cite{DQGesturVolume} it was developed a mechanism to compute the kind of coefficients that appear in Theorem~\ref{thm:momentmapSO(2)Toeplitz}, and the information needed is given in terms of the associated Jordan algebra structure. Hence, in this subsection we will compute some of such information to obtain explicit formulas for the coefficients in Theorem~\ref{thm:momentmapSO(2)Toeplitz}. We will provide most of the required proofs, but we refer to \cite{LoosJP,LoosBSDJ,Upmeier1996} for the well known facts.

For $n \geq 3$, every element $x \in \R^n$ will be decomposed as
\[
	x = x_1 e_1 + x'
\]
where $e_1$ is the first canonical basis vector and $x' = (x_2, \dots, x_n)^\top$. Then, $\R^n$ becomes a Jordan algebra with the product given by
\begin{equation}\label{eq:JordanProdReal}
	x \circ y = (x_1 y_1 + x'\cdot y')e_1 + (x_1 y' + y_1 x').
\end{equation}
It is clear that $e_1$ is a unit of this Jordan algebra.

As usual, we denote $x^2 = x \circ x$, for every $x \in \R^n$. Then, the cone of positive elements in $\R^n$ is defined by
\begin{equation}\label{eq:JordanConeReal}
	\Omega = \{ x^2 \mid x \in \R^n\}^\circ,
\end{equation}
the interior of the subset of squares. This cone yields an order $\prec$ in $\R^n$ given by defining $x \prec y$ if and only if $y-x \in \Omega$. 

\begin{proposition}\label{prop:OmegaCone}
	The cone of positive elements of the Jordan algebra $\R^n$ satisfies
	\[
		\Omega = \{ x \in \R^n \mid x_1 > 0, \; x_1^2 - x'\cdot x'\}.
	\]
\end{proposition}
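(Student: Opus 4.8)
The plan is to compute the squares in this Jordan algebra explicitly and then identify the whole set of squares as a closed ``light cone'', whose interior gives $\Omega$. From \eqref{eq:JordanProdReal}, for $x = x_1 e_1 + x'$ one computes
\[
    x^2 = x\circ x = (x_1^2 + x'\cdot x')e_1 + 2x_1 x'.
\]
Writing $y = y_1 e_1 + y'$ for a typical square $y = x^2$, I would show that the set of squares $S = \{x^2 \mid x \in \R^n\}$ equals the closed cone $\overline{C} = \{y \in \R^n \mid y_1 \geq 0,\ y_1^2 - y'\cdot y' \geq 0\}$, and then pass to its interior to obtain the stated description of $\Omega = S^\circ$ coming from \eqref{eq:JordanConeReal}. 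Throughout I write $\lvert y'\rvert = \sqrt{y'\cdot y'}$.

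For the inclusion $S \subseteq \overline{C}$, I would simply read off from the formula above that $y_1 = x_1^2 + x'\cdot x' \geq 0$, and compute
\[
    y_1^2 - y'\cdot y' = (x_1^2 + x'\cdot x')^2 - 4x_1^2\, (x'\cdot x') = (x_1^2 - x'\cdot x')^2 \geq 0,
\]
so that every square lies in $\overline{C}$.

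The converse $\overline{C} \subseteq S$ is the step I expect to require the most care. Given $y$ with $y_1 \geq 0$ and $y_1^2 \geq y'\cdot y'$, I must exhibit $x$ with $x^2 = y$. If $y' = 0$ I take $x = \sqrt{y_1}\, e_1$; if $y' \neq 0$ I set $x' = s\, y'/\lvert y'\rvert$ for a scalar $s$, which reduces the system $x_1^2 + x'\cdot x' = y_1$, $2x_1 x' = y'$ to the two scalar equations $x_1^2 + s^2 = y_1$ and $2x_1 s = \lvert y'\rvert$. Then $x_1^2$ and $s^2$ are the two roots of $t^2 - y_1 t + \tfrac14 \lvert y'\rvert^2 = 0$, which are real and nonnegative precisely because the discriminant $y_1^2 - \lvert y'\rvert^2$ is nonnegative and the root sum $y_1$ and root product $\tfrac14\lvert y'\rvert^2$ are both nonnegative; choosing $x_1$ and $s$ with the common sign forced by $2x_1 s = \lvert y'\rvert > 0$ then solves the system. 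This yields $S = \overline{C}$.

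Finally, since $\overline{C}$ is a solid closed convex cone, its interior is obtained by replacing the weak inequalities with strict ones: a point with $y_1 > \lvert y'\rvert$ has a whole neighborhood inside $\overline{C}$ by continuity, while any boundary point with $y_1 = \lvert y'\rvert$ is a limit of points outside $\overline{C}$. Hence $\Omega = S^\circ = \{y \mid y_1 > 0,\ y_1^2 - y'\cdot y' > 0\}$, which is the asserted formula (read with the strict inequality $x_1^2 - x'\cdot x' > 0$). The only genuine obstacle is the solvability argument in the converse inclusion; the square computation and the interior computation are routine.
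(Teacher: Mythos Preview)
Your proposal is correct and follows essentially the same route as the paper: both compute $x^2 = (x_1^2 + x'\cdot x')e_1 + 2x_1 x'$, show that the set of squares coincides with the closed light cone $\{y_1 \geq 0,\ y_1^2 - y'\cdot y' \geq 0\}$ by solving the same pair of equations for a square root (the paper's biquadratic $4y_1^4 - 4x_1 y_1^2 + x'\cdot x' = 0$ is exactly your quadratic $t^2 - y_1 t + \tfrac14|y'|^2 = 0$ in $t = y_1^2$), and then pass to the interior. The only cosmetic differences are your use of the ansatz $x' = s\,y'/|y'|$ and your explicit one-line justification for why the interior is given by strict inequalities, which the paper leaves implicit.
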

\begin{proof}
	By taking interiors it is enough to prove that $x$ is the square of some element if and only if $x_1 \geq 0$ and $x_1^2 - x'\cdot x'\geq 0$.
	
	Let us assume that $x = y^2$. Then, we have
	\begin{align*}
		x_1 &= y_1^2 + y'\cdot y' \geq 0 \\
		x_1^2 - x'\cdot x' &= (y_1^2 + y'\cdot y')^2 
			- 4y_1^2 y'\cdot y' \\
			&= (y_1^2 - y'\cdot y')^2 \geq 0.
	\end{align*}

	Conversely, let us consider $x \in \R^n$ such that $x_1 \geq 0$ and $x_1^2 - x'\cdot x' \geq 0$. We need to prove the existence of $y \in \R^n$ such that $y^2 = x$. This is equivalent to solving for $y$ the equations
	\begin{align}
		y_1^2 + y' \cdot y' &= x_1, \label{eq:solveroot1} \\
		2y_1 y' &= x'. \label{eq:solveroot2}
	\end{align}
	If $x_1 = 0$, then we necessarily have $x' = 0$, and a solution to \eqref{eq:solveroot1} and \eqref{eq:solveroot2} is given by $y = 0$. If $x_1 > 0$ and $x' = 0$, then a solution is given by $y = (\sqrt{x}_1, 0, \dots, 0)^\top$. 
	
	Hence, we will assume that $x_1 > 0$ and $x' \not= 0$. Any solution $y$ must satisfy $y_1 \not= 0$. It follows that replacing $y' = \frac{x'}{2y_1}$, the system of equations \eqref{eq:solveroot1} and \eqref{eq:solveroot2} is equivalent to 
	\begin{align*}
		4y_1^4 - 4x_1 y_1^2 + x'\cdot x' &= 0 \\
		2y_1 y' &= x'.
	\end{align*}
	We solve for $y_1$ the first of these equations by choosing
	\[
		y_1 = \sqrt{\frac{x_1 + \sqrt{x_1^2 - x'\cdot x'}}{2}}
	\]
	which is a well defined positive real number by the choice of $x$. Correspondingly, we choose
	\[
		y' = \frac{x'}{2y_1} = \frac{x'}{\sqrt{2(x_1 + \sqrt{x_1^2 - x'\cdot x'})}}.
	\]
	It is now straightforward to check that for such element $y$ we have $y^2 = x$.
\end{proof}

The existence and uniqueness of square roots in $\Omega$ is well known. In the next result we provide an explicit formula for such square roots. Our proof shows the uniqueness for the sake of completeness.

\begin{corollary}\label{cor:JordanRoots}
	In the Jordan algebra $\R^n$, for every $x \in \Omega$ there exists a unique $y \in \Omega$ such that $y^2 = x$. This solution will be denoted $\sqrt{x}$, and it is given by
	\[
		\sqrt{x} =
			\sqrt{\frac{x_1 + \sqrt{x_1^2 - x'\cdot x'}}{2}} e_1 + 
				\frac{x'}{\sqrt{2(x_1 + \sqrt{x_1^2 - x'\cdot x'})}}.
	\]
\end{corollary}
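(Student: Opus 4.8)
The plan is to build directly on the construction already carried out inside the proof of Proposition~\ref{prop:OmegaCone}. Given $x \in \Omega$, Proposition~\ref{prop:OmegaCone} guarantees $x_1 > 0$ and $x_1^2 - x'\cdot x' > 0$, so every square root and quotient appearing in the stated formula is well defined. For existence I would simply reuse the element $y$ exhibited in that earlier proof, which is exactly the one in the statement: with $y_1 = \sqrt{(x_1 + \sqrt{x_1^2 - x'\cdot x'})/2}$ and $y' = x'/(2y_1)$, it was already verified there that $y^2 = x$. Thus the only genuinely new tasks are to confirm that this particular $y$ lies in $\Omega$ and to prove uniqueness of the root inside $\Omega$.

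For the membership claim I would set $D = \sqrt{x_1^2 - x'\cdot x'} > 0$, so that $x'\cdot x' = x_1^2 - D^2 = (x_1 - D)(x_1 + D)$. Then $y_1 > 0$ is immediate, and a short computation collapses the quadratic invariant: using $y_1^2 = (x_1 + D)/2$ and $y'\cdot y' = (x'\cdot x')/(4y_1^2) = (x_1 - D)/2$, one finds $y_1^2 - y'\cdot y' = D > 0$. By the description of $\Omega$ in Proposition~\ref{prop:OmegaCone}, this places $y \in \Omega$, as required.

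For uniqueness I would take any $w \in \Omega$ with $w^2 = x$ and expand via the Jordan product \eqref{eq:JordanProdReal}, obtaining the two scalar equations $w_1^2 + w'\cdot w' = x_1$ and $2w_1 w' = x'$. When $x' = 0$ the condition $w_1 > 0$ forces $w' = 0$ and $w_1 = \sqrt{x_1}$, so the root is unique. When $x' \neq 0$, substituting $w' = x'/(2w_1)$ gives the biquadratic $4w_1^4 - 4x_1 w_1^2 + x'\cdot x' = 0$, whose roots are $w_1^2 = (x_1 \pm D)/2$; both are genuine positive values since $0 < D < x_1$ here. The decisive step is that the very same computation as above yields $w_1^2 - w'\cdot w' = \pm D$ for these two candidates, and membership in $\Omega$ demands $w_1^2 - w'\cdot w' > 0$, so only the choice $w_1^2 = (x_1 + D)/2$ survives. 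This pins down $w_1$, and hence $w'$, uniquely and reproduces the stated formula.

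The work here is essentially bookkeeping; the \emph{crux} is the observation that the two algebraic square roots of $x$ with positive first coordinate are distinguished precisely by the sign of the invariant $w_1^2 - w'\cdot w'$, and that the cone condition $w \in \Omega$ selects the "$+$" root. Once this is isolated, everything reduces to the elementary algebra already rehearsed in Proposition~\ref{prop:OmegaCone}, so I do not anticipate any serious obstacle beyond keeping the sign computation clean.
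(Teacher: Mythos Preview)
Your proposal is correct and follows essentially the same route as the paper: you reuse the explicit square root from the proof of Proposition~\ref{prop:OmegaCone}, verify membership in $\Omega$ by computing $y_1^2 - y'\cdot y'$, and for uniqueness reduce to the biquadratic in $w_1^2$ and use the cone condition to discard the spurious root. Your organization of the uniqueness step---computing $w_1^2 - w'\cdot w' = \pm D$ directly for the two candidates---is in fact a bit cleaner than the paper's version, which instead argues by contradiction from the minus sign; but the underlying idea is identical.
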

\begin{proof}
	For a given $x \in \Omega$, let us consider the element $y$ given in the statement. As noted in the proof of Proposition~\ref{prop:OmegaCone} we have $y^2 = x$. This choice of $y$ clearly satisfies $y_1 > 0$. We also have
	\begin{align*}
		y_1^2 - y'\cdot y' &= \frac{x_1 + \sqrt{x_1^2 - x'\cdot x'}}{2} - 
			\frac{x'\cdot x'}{2(x_1 + \sqrt{x_1^2 - x'\cdot x'})} \\
			&= \frac{(x_1 + \sqrt{x_1^2 - x'\cdot x'})^2 - x'\cdot x'}{2(x_1 + \sqrt{x_1^2 - x'\cdot x'})} \\
			&= \frac{x_1^2 - x'\cdot x' + x_1\sqrt{x_1^2 - x'\cdot x'}}{x_1 + \sqrt{x_1^2 - x'\cdot x'}} \\
			&= x_1 \sqrt{x_1^2 - x'\cdot x'} > 0.
	\end{align*}
	Hence, it follows from Proposition~\ref{prop:OmegaCone} that $y \in \Omega$, and so the existence has been established.
	
	To establish the uniqueness, we note that it is clear for $x' = 0$. Hence, we will assume that $x' \not= 0$. In this situation, the proof of Proposition~\ref{prop:OmegaCone} shows that any $y \in \R^n$ such that $y^2 = x$ satisfies
	\[
		y_1^2 = \frac{x_1 \pm \sqrt{x_1^2 - x'\cdot x'}}{2}.
	\]
	If we further assume that $y \in \Omega$, we claim that
	\[
		y_1^2 \not= \frac{x_1 - \sqrt{x_1^2 - x'\cdot x'}}{2}.
	\]
	Otherwise we would have
	\[
		y' = \frac{x'}{2y_1} = \pm\frac{x'}{\sqrt{2(x_1 - \sqrt{x_1^2 - x'\cdot x'})}}.
	\]
	Hence, the condition $y_1^2 - y'\cdot y' > 0$ would yield the inequality
	\[
		\frac{x_1 - \sqrt{x_1^2 - x'\cdot x'}}{2} > 
		\frac{x'\cdot x'}{2(x_1 - \sqrt{x_1^2 - x'\cdot x'})},
	\]
	and using that $x_1^2 - x'\cdot x' > 0$, this last inequality is easily seen to be equivalent to
	\[
		\sqrt{x_1^2 - x'\cdot x'} > x_1 > 0
	\]
	which is a contradiction. We conclude that any $y \in \Omega$ such that $y^2 = x$ must satisfy
	\[
		y_1^2 = \frac{x_1 + \sqrt{x_1^2 - x'\cdot x'}}{2},
	\]
	and so we also must have
	\[
		y_1 = \sqrt{\frac{x_1 + \sqrt{x_1^2 - x'\cdot x'}}{2}}.
	\]
	This implies that uniqueness holds as well as the formula in the statement.
\end{proof}

Let us now construct the complex Jordan algebra associated to $\DIV{n}$. This will be basically the complexification of the Jordan algebra structure defined for $\R^n$ above. However, we will require the correct choice of a real form. 

We consider the $\R$-linear map $E : \R^n \rightarrow \C^n$ given by
\begin{equation}\label{eq:Emap}
	E(x) = x_1 e_1 + i x',
\end{equation}
where we use the notation established before. Then, we clearly have the direct sum $\C^n = E(\R^n) \oplus i E(\R^n)$. In other words, $E(\R^n)$ is a real form of $\C^n$. Hence, we can extend the real Jordan algebra structure on $\R^n$ to a complex Jordan algebra structure by complexification. A straightforward computation shows that the corresponding product on $\C^n$ is given by
\begin{equation}\label{eq:JordanProductC}
	z\circ w = (z_1 w_1 - z'\cdot w')e_1 + (z_1 w' + w_1 z').
\end{equation}
As before, for every element $z \in \C^n$, we denote by $z'$ the vector in $\C^{n-1}$ obtained from $z$ by dropping the first coordinate. Also, the real form $E(\R^n)$ defines in $\C^n$ an involution $z \mapsto z^*$ given by the conjugation with respect to such real form. A simple computation shows that we have
\begin{equation}\label{eq:JordanConjC}
	z^* = \overline{z}_1 e_1 - \overline{z}',
\end{equation}
for every $z \in \C^n$.

We will show that $\C^n$ with the product and conjugation given above is precisely the complex Jordan algebra associated to $\DIV{n}$ through the theory that establishes the correspondence between tube-type bounded symmetric domains and complex Jordan algebras. This fact is basically contained in Example~1.5.37 from \cite{Upmeier1996}, but we will sketch its proof below for the sake of completeness. Further details on such correspondence between Jordan structures and bounded symmetric domains can be found in \cite{LoosBSDJ,LoosJP,Upmeier1996}. We will use some facts and results from these references. We now state the main correspondence result that we will use, which follows from pages v-viii and 3.6 from \cite{LoosBSDJ} (see also Subsection~1.9 and Proposition~1.11 from~\cite{LoosJP}). The definitions of the algebraic objects considered in the next results and their proofs can be found in the references just mentioned.

\begin{proposition}\label{prop:AssociatedJordan}
	Let $D$ be a circled bounded symmetric domain in $\C^n$ with (weightless) Bergman kernel $K_D$, and consider the coefficients
	\[
		C_{jkml} = c\eval[3]{\frac{\partial^4 \log K_D(z,z)}{\partial z_j \partial\overline{z}_k \partial z_m \partial \overline{z}_l}}_{z=0}
	\]
	where $j,k,m,l = 1, \dots, n$ and $c > 0$ is some normalizing constant. Assume that $\C^n$ carries a conjugation $v \mapsto v^*$ from a given real form. If we define on $\C^n$ a triple product by the expression
	\[
		\{u v^* w\} = 
			\sum_{j,k,m,l=1}^n C_{jkml}u_j v^*_k w_m e_l,
	\]
	for every $u, v, w \in \C^n$, then $\C^n$ becomes the Jordan pair associated to $D$. Furthermore, if $e \in \C^n$ is a maximal tripotent, then the assignment 
	\[
		(z,w) \mapsto z \circ w = \frac{1}{2} \{z e^* w\}
	\]
	where $z, w \in \C^n$, defines a Jordan algebra structure on $\C^n$ with unit element $e$. This Jordan algebra structure on $\C^n$ is the Jordan algebra associated to $D$.
\end{proposition}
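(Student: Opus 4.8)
The plan is to treat this statement as a recollection of Loos's correspondence between bounded symmetric domains and Jordan pairs, reducing the proof to (i) identifying the analytic tensor $C_{jkml}$ with the algebraic triple product, and (ii) invoking the standard tripotent-to-homotope passage. The deep content is already contained in \cite{LoosBSDJ,LoosJP}, so the work is to match conventions and verify the two concrete claims.

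First I would recall that a circled bounded symmetric domain $D \subset \C^n$ in Harish-Chandra form carries a canonical Hermitian Jordan triple structure on $\C^n$, identified with $T_0 D$, and that this is the Jordan pair associated to $D$. The nontrivial point to check is that the triple product built from the fourth derivatives of $\log K_D$ is exactly this canonical one. The key is geometric: since $\log K_D(z,z)$ is a constant multiple of a K\"ahler potential for the Bergman metric, and since at the origin of the Harish-Chandra realization the first derivatives of the metric vanish, the mixed fourth derivatives $\partial_j\partial_{\overline k}\partial_m\partial_{\overline l}\log K_D(z,z)|_{z=0}$ are precisely the components of the curvature tensor of the Bergman metric at $0$. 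It is classical (see \cite{LoosBSDJ,Upmeier1996}) that for a bounded symmetric domain this curvature tensor equals, up to sign and a universal scalar depending on the genus, the Jordan triple product written in coordinates, i.e. $R_{j\overline k m\overline l}(0) = -\langle\{e_j\, e_k^*\, e_m\}, e_l\rangle$ in the appropriate normalization. Choosing $c>0$ to absorb this universal scalar, the tensor $C_{jkml}$ reproduces the structure constants of the triple product, so that $\{u v^* w\} = \sum C_{jkml} u_j v_k^* w_m e_l$ is the associated Jordan pair product on $\C^n$.

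For the Jordan algebra claim I would appeal directly to the tripotent-to-homotope construction. Given a tripotent $e$, the operator $w \mapsto \{e\, e^*\, w\}$ acts by the scalar $2$ on the Peirce-$2$ space $V_2(e)$, and for a maximal tripotent (as relevant here, with $V_2(e) = \C^n$) this holds on all of $\C^n$; hence $e\circ w = \tfrac12\{e\,e^*\,w\} = w$, so $e$ is a unit. That $(z,w)\mapsto \tfrac12\{z\,e^*\,w\}$ is a Jordan product is the $e$-homotope construction, which is exactly Proposition~1.11 of \cite{LoosJP}. This step is therefore a citation once the triple product has been identified in the previous paragraph.

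The hard part will be the convention matching in the second step: pinning down the sign, the role of the conjugation $v\mapsto v^*$ coming from the chosen real form, and the exact value of the normalizing constant $c$, so that $C_{jkml}$ reproduces Loos's triple product verbatim rather than a scalar multiple or a conjugate of it. Once these normalizations are fixed against \cite{LoosBSDJ,LoosJP}, the remaining assertions are either standard Jordan pair structure theory or direct appeals to the cited results.
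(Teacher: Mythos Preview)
Your proposal is correct and aligns with the paper's treatment: the paper does not give an independent proof of this proposition but states it as a direct consequence of Loos's work, citing pages v--viii and 3.6 of \cite{LoosBSDJ} together with Subsection~1.9 and Proposition~1.11 of \cite{LoosJP}. Your sketch---identifying $C_{jkml}$ with the curvature tensor of the Bergman metric at the origin, invoking the curvature/triple-product identity from \cite{LoosBSDJ,Upmeier1996}, and then the $e$-homotope construction of Proposition~1.11 in \cite{LoosJP}---is exactly the content behind those citations, so you have essentially unpacked what the paper leaves to the references.
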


The previous result allows us to identify the Jordan algebra associated to the domain $\DIV{n}$.

\begin{corollary}\label{cor:JordanDIV}
	The complex Jordan algebra associated to $\DIV{n}$ is obtained by endowing $\C^n$ with the product and conjugation given by \eqref{eq:JordanProductC} and \eqref{eq:JordanConjC}.
\end{corollary}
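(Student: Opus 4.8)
The plan is to apply Proposition~\ref{prop:AssociatedJordan} directly: I must compute the coefficients $C_{jkml}$ from the weightless Bergman kernel of $\DIV{n}$, read off the resulting triple product $\{u v^* w\}$, exhibit $e_1$ as a maximal tripotent lying in the real form $E(\R^n)$, and finally check that $z \circ w = \frac{1}{2}\{z e_1^* w\}$ reproduces \eqref{eq:JordanProductC}. Since the conjugation $z \mapsto z^*$ of \eqref{eq:JordanConjC} has already been identified as the one coming from the real form $E(\R^n)$, and $e_1 = E((1,0,\dots,0)^\top) \in E(\R^n)$, it suffices to match the products.

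For the coefficients I would use $K(z,z) = \Delta(z)^{-n}$, so that $\log K(z,z) = -n \log \Delta(z)$ with $\Delta(z) = 1 - 2|z|^2 + |z^\top z|^2$. Writing $u = |z|^2 = \sum_j z_j \overline{z}_j$ (of bidegree $(1,1)$) and $v = |z^\top z|^2 = (z^\top z)\overline{(z^\top z)}$ (of bidegree $(2,2)$), the fourth mixed derivative $\partial_{z_j}\partial_{\overline{z}_k}\partial_{z_m}\partial_{\overline{z}_l}$ evaluated at $0$ only sees the bidegree-$(2,2)$ part of $\log\Delta$. Expanding $\log(1 - 2u + v) = (-2u+v) - \frac{1}{2}(-2u+v)^2 + \cdots$ and collecting bidegree $(2,2)$ gives exactly $v - 2u^2$, so the relevant part of $\log K(z,z)$ is $2n u^2 - n v$. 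Differentiating $u^2 = \sum_{a,b} z_a z_b \overline{z}_a \overline{z}_b$ and $v = \sum_{a,b} z_a^2 \overline{z}_b^2$ then yields
\[
	\eval[2]{\frac{\partial^4 \log K(z,z)}{\partial z_j \partial\overline{z}_k \partial z_m \partial \overline{z}_l}}_{z=0} = 4n\big(\delta_{jl}\delta_{km} + \delta_{jk}\delta_{lm} - \delta_{jm}\delta_{kl}\big).
\]
Choosing the normalizing constant $c = \frac{1}{2n}$ gives $C_{jkml} = \delta_{jl}\delta_{km} + \delta_{jk}\delta_{lm} - \delta_{jm}\delta_{kl}$.

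Substituting this into the triple product of Proposition~\ref{prop:AssociatedJordan} collapses the four sums through the Kronecker deltas and produces the spin-factor formula
\[
	\{u v^* w\} = 2\big[((v^*)^\top w)\,u + (u^\top v^*)\,w - (u^\top w)\,v^*\big],
\]
where $a^\top b = \sum_k a_k b_k$. Here $e_1$ is a maximal tripotent (Example~1.5.37 of \cite{Upmeier1996}), and using $e_1^* = e_1$ from \eqref{eq:JordanConjC} together with $e_1^\top e_1 = 1$ one checks $\frac{1}{2}\{e_1 e_1^* z\} = z$, so $e_1$ is indeed the unit of the resulting Jordan algebra. Finally,
\[
	z \circ w = \frac{1}{2}\{z e_1^* w\} = w_1 z + z_1 w - (z^\top w) e_1,
\]
whose first component is $z_1 w_1 - z'\cdot w'$ and whose remaining components form $z_1 w' + w_1 z'$; this is precisely \eqref{eq:JordanProductC}.

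I expect the fourth-derivative computation to be the main obstacle, since it requires correctly isolating the bidegree-$(2,2)$ contribution to $\log\Delta$ and then carrying out the combinatorics of the four differentiations without error; the symmetries $C_{jkml} = C_{mkjl}$ and $C_{jkml} = C_{jlmk}$ built into the Kronecker-delta answer (reflecting that $\{u v^* w\}$ is symmetric in $u,w$ and conjugate-linear in $v$) serve as a useful consistency check. The remaining verification that $e_1$ is a maximal tripotent and that the induced product matches \eqref{eq:JordanProductC} is then short and routine, so I would keep those steps brief and lean on \cite{Upmeier1996} for the classification statement.
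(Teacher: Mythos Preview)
Your approach is essentially identical to the paper's: compute the $C_{jkml}$ from the fourth derivative of $\log K$ at $0$, plug into the triple product, take $e_1$ as a maximal tripotent, and verify that $\frac{1}{2}\{z e_1 w\}$ reproduces \eqref{eq:JordanProductC}. One slip: with $c = \frac{1}{2n}$ and your own fourth derivative $4n(\delta_{jl}\delta_{km}+\delta_{jk}\delta_{lm}-\delta_{jm}\delta_{kl})$ you should get $C_{jkml} = 2(\delta_{jl}\delta_{km}+\delta_{jk}\delta_{lm}-\delta_{jm}\delta_{kl})$, matching the paper; your triple-product formula $\{u v^* w\} = 2[\cdots]$ already uses this correct value, so the final Jordan product is right despite the missing $2$ in your displayed $C_{jkml}$.
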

\begin{proof}
	We choose the normalizing constant $c = \frac{1}{2n}$. A lengthy but straightforward computation shows that the coefficients defined in Proposition~\ref{prop:AssociatedJordan} are given by
	\[
		C_{jkml} = 2(\delta_{jk}\delta_{ml} - \delta_{jm}\delta_{kl} + \delta_{km}\delta_{jl}).
	\]
	Note that one can take over from Proposition~\ref{prop:BergmanMetric} and its proof to obtain this expression. Another straightforward computation shows that the triple product given in Proposition~\ref{prop:AssociatedJordan} satisfies 
	\[
		\frac{1}{2} \{z e_1^* w\} = \frac{1}{2} \{z e_1 w\} = 
		(z_1 w_1 - z'\cdot w')e_1 + (z_1 w' + w_1 z').
	\]
	Finally, it is easy to prove that $e_1$ is maximal tripotent (see Example~1.5.69 from~\cite{Upmeier1996}). Hence, the result follows from Proposition~\ref{prop:AssociatedJordan}.
\end{proof}

\subsection{Spectral integral formulas for moment map symbols for $\SO(2)$}
We will now apply one of the main results from \cite{DQGesturVolume} to our setup to obtain explicit expressions for the coefficients from Theorem~\ref{thm:momentmapSO(2)Toeplitz}. The next result is a restatement of Theorem~4.11 from \cite{DQGesturVolume} for the domain $\DIV{n}$ adapted to our current notation.

\begin{theorem}\label{thm:DQBGesturVolume}
	Let $n \geq 3$ and $\lambda > n-1$ be given. For any symbol $a \in L^\infty(\DIV{n})$ which is $\SO(n) \times \SO(2)$-invariant the Toeplitz operator $T^{(\lambda)}_a$ preserves the Hilbert direct sum decomposition from Proposition~\ref{prop:isoSO(n)SO(2)}, and for every $(k_1, k_2) \in \N^2$ there exists a constant $c_{k_1, k_2}(a,\lambda) \in \C$ such that
	\[
		T^{(\lambda)}_a|_{\HH^{k_1}(\C^n) (z^\top z)^{k_2}} = 
		c_{k_1, k_2}(a,\lambda) I_{\HH^{k_1}(\C^n) (z^\top z)^{k_2}}.
	\]
	Furthermore, we have
	\begin{multline*}
		c_{k_1, k_2}(a,\lambda) = 
		\frac{\displaystyle \int\limits_{\Omega \cap (e_1-\Omega)} a(E(\sqrt{x})) p_1(E(x))^{k_1} p_2(E(x))^{k_2} p_2(e_1 - E(x))^{\lambda -n} \dif x}{\displaystyle \int\limits_{\Omega \cap (e_1-\Omega)} p_1(E(x))^{k_1} p_2(E(x))^{k_2} p_2(e_1 - E(x))^{\lambda -n} \dif x} \\
		=
		\frac{\displaystyle \int\limits_{\Omega \cap (e_1-\Omega)} a(E(\sqrt{x})) (x_1 + x_2)^{k_1} (x_1^2 - x' \cdot x')^{k_2} ((1-x_1)^2 - x'\cdot x')^{\lambda -n} \dif x}{\displaystyle \int\limits_{\Omega \cap (e_1-\Omega)} (x_1 + x_2)^{k_1} (x_1^2 - x' \cdot x')^{k_2} ((1-x_1)^2 - x'\cdot x')^{\lambda -n} \dif x},
	\end{multline*}
	for every $(k_1, k_2) \in \N^2$, where $p_1(z) = z_1 - iz_2$ and $p_2(z) = z^\top z$.
\end{theorem}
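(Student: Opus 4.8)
The plan is to treat the two assertions of the theorem separately: first that $T^{(\lambda)}_a$ acts by a scalar on each isotypic component, and then the explicit integral formula for that scalar. For the first assertion I would argue exactly as in Theorem~\ref{thm:momentmapSO(2)Toeplitz}: since $a$ is $\SO(n)\times\SO(2)$-invariant, Proposition~\ref{prop:HinvHinter} shows that $T^{(\lambda)}_a$ intertwines $\pi_\lambda|_{\SO(n)\times\SO(2)}$ and hence preserves the isotypic decomposition of Proposition~\ref{prop:isoSO(n)SO(2)}; because that decomposition is multiplicity-free, Schur's Lemma forces $T^{(\lambda)}_a$ to act as $c_{k_1,k_2}(a,\lambda)I$ on each term $\HH^{k_1}(\C^n)(z^\top z)^{k_2}$.

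For the explicit formula, the key point is that the statement is a specialization of Theorem~4.11 of \cite{DQGesturVolume}, so the bulk of the work is to translate the general Jordan-theoretic formula into the coordinates of $\DIV{n}$. That general result expresses $c_{k_1,k_2}(a,\lambda)$ as a ratio of integrals over the order interval $\Omega\cap(e_1-\Omega)$ of the associated symmetric cone, with integrand built from the symbol evaluated at the Jordan square root (pulled into the domain via $E$), the conical polynomials attached to the highest weight, the Jordan norm, and a boundary weight factor. The mechanism behind it --- which one could in principle reproduce directly --- is to compute the scalar as the Rayleigh quotient $c_{k_1,k_2}(a,\lambda)=\langle T^{(\lambda)}_a v,v\rangle_\lambda/\langle v,v\rangle_\lambda$ on the highest weight vector $v=p_1^{k_1}p_2^{k_2}$ and then to reduce $\int_{\DIV{n}}a\,|v|^2\,\dif v_\lambda$ to an integral whose radial coordinate runs over $\Omega\cap(e_1-\Omega)$ by a polar-type change of variables adapted to the $\SO(n)\times\SO(2)$-action. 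I would invoke the theorem with the Jordan algebra structure identified in Corollary~\ref{cor:JordanDIV} and the explicit square root furnished by Corollary~\ref{cor:JordanRoots}. This yields the first equality once one records that $p_1(z)=z_1-iz_2$ is the degree-one conical polynomial, namely the highest weight vector of $\HH^{k_1}(\C^n)$ from Proposition~\ref{prop:isoSO(n)SO(2)}, that $p_2(z)=z^\top z$ restricts on the real form $E(\R^n)$ to the Jordan norm, and that the genus of $\DIV{n}$ equals $n$, which is what produces the exponent $\lambda-n$.

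To pass from the first to the second equality I would simply evaluate the three polynomial factors on the real form. Writing $E(x)=x_1e_1+ix'$ for $x\in\R^n$, one computes $p_1(E(x))=x_1-i(ix_2)=x_1+x_2$, then $p_2(E(x))=x_1^2+\sum_{j=2}^{n}(ix_j)^2=x_1^2-x'\cdot x'$, and likewise $p_2(e_1-E(x))=(1-x_1)^2-x'\cdot x'$; substituting these into the first formula gives the second displayed integral.

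The main obstacle I anticipate is bookkeeping rather than analysis: one must confirm that the labeling of isotypic components by $(k_1,k_2)$ in Proposition~\ref{prop:isoSO(n)SO(2)} matches the indexing by conical polynomials $\Delta_1^{k_1}\Delta_2^{k_2}$ used in \cite{DQGesturVolume}, with $\Delta_1=p_1$ of degree one and $\Delta_2=p_2$ the full rank-two norm, and that the normalizing constant entering the structure coefficients is the same $c=\frac{1}{2n}$ fixed in Corollary~\ref{cor:JordanDIV}. Once these identifications and the value of the genus are pinned down, the translation is mechanical and the polynomial computations above complete the proof.
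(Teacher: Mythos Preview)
Your proposal is correct and follows essentially the same approach as the paper: both treat the result as a specialization of Theorem~4.11 from \cite{DQGesturVolume}, identify the relevant Jordan-algebraic data via Corollary~\ref{cor:JordanDIV}, reconcile the $(k_1,k_2)$-labeling with the conical-polynomial indexing used there, and then obtain the second integral formula by direct substitution of $p_1(E(x))$, $p_2(E(x))$, and $p_2(e_1-E(x))$. The only minor differences are that you spell out the Schur's Lemma argument for the first assertion separately (the paper folds this into the citation) and you mention Corollary~\ref{cor:JordanRoots}, which is not actually needed here since $\sqrt{x}$ enters the formula only abstractly through $a(E(\sqrt{x}))$.
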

\begin{proof}
	We mainly need to compare the notation from \cite{DQGesturVolume} with ours with respect to Theorem~4.11 of that reference. This will be done exclusively for the case of the domain $\DIV{n}$.
	
	In the first place, the results from \cite{DQGesturVolume} consider the polynomials that yield the highest weight vectors for the isotypic decomposition in Proposition~\ref{prop:isoSO(n)SO(2)}. In our case, these are precisely $p_1(z)^{k_1} p_2(z)^{k_2}$, where $(k_1, k_2) \in \N^2$, and in \cite{DQGesturVolume} these are parameterized as $p_1(z)^{\alpha_1 - \alpha_2} p_2(z)^{\alpha_2}$, where $\alpha \in \N^2$ satisfies $\alpha_1 \geq \alpha_2$. A standard change of parameter show these to be equivalent.
	
	On the other hand, Theorem~4.11 from \cite{DQGesturVolume} considers the positive cone in the complex Jordan algebra associated to the domain $\DIV{n}$. The latter has been proved to be precisely the one described in Corollary~\ref{cor:JordanDIV}. This complex Jordan algebra and its positive cone are related to the Jordan algebra $\R^n$ and its corresponding cone (given by \eqref{eq:JordanProdReal} and \eqref{eq:JordanConeReal}, respectively) through the map $E$ given by \eqref{eq:Emap}. Hence, the first integral formula in the statement is equivalent to that in Theorem~4.11 from \cite{DQGesturVolume} by the change of variable $E$, which clearly preserves the Lebesgue measure. Note that we have used that $E$ fixes $e_1$.
	
	Hence, the second integral formula follows by substitution.
\end{proof}

\begin{remark}\label{rmk:Evariablechange}
	Let us consider $x \in \Omega$. Then, Corollary~\ref{cor:JordanRoots} implies that
	\[
		E(\sqrt{x}) =
		\sqrt{\frac{x_1 + \sqrt{x_1^2 - x'\cdot x'}}{2}} e_1 + 
		i\frac{x'}{\sqrt{2(x_1 + \sqrt{x_1^2 - x'\cdot x'})}}.
	\]
	From this a simple computation yields
	\begin{align*}
		|E(\sqrt{x})|^2 &=
			\frac{x_1 + \sqrt{x_1^2 - x'\cdot x'}}{2} + 
		\frac{x'\cdot x'}{2(x_1 + \sqrt{x_1^2 - x'\cdot x'})} = x_1, \\
		E(\sqrt{x})^\top E(\sqrt{x}) &= 
			\frac{x_1 + \sqrt{x_1^2 - x'\cdot x'}}{2} - 
		\frac{x'\cdot x'}{2(x_1 + \sqrt{x_1^2 - x'\cdot x'})}
		= \sqrt{x_1^2 - x' \cdot x'}.
	\end{align*}
	It follows that for every $x \in \Omega$ the following sequence of equivalences hold
	\begin{align*}
		E(\sqrt{x}) \in \DIV{n} 
			&\Longleftrightarrow
				|E(\sqrt{x})|^2 < 1, \quad
				2|E(\sqrt{x})|^2 < |E(\sqrt{x})^\top E(\sqrt{x})|^2 + 1 \\
			&\Longleftrightarrow
				0 < x_1 < 1, \quad 
				2x_1 < x_1^2 - x'\cdot x' +  1 \\
			&\Longleftrightarrow
				0 < x_1 < 1, \quad 
				x' \cdot x' < (1 - x_1)^2 \\
			&\Longleftrightarrow
				x \in \Omega \cap (e_1 - \Omega).
	\end{align*}
	This precisely yields the elements $x \in \R^n$ over which the integral from Theorem~\ref{thm:DQBGesturVolume} is taking place. With the order defined by the cone $\Omega$, this condition can be rewritten as $0 \prec x \prec e_1$.
\end{remark}

The next result provides the spectral integral formulas for the Toeplitz operators with moment map symbols for $\SO(2)$.

\begin{theorem}\label{thm:specintToeplitzSO(2)}
	For every $n \geq 3$ and $\lambda > n-1$, let $a = f\circ \mu^{\SO(2)} \in L^\infty(\DIV{n})$ be a moment map symbol for $\SO(2)$. Then, the Toeplitz operator $T^{(\lambda)}_a$ preserves the Hilbert direct sum decomposition from Proposition~\ref{prop:isoSO(n)SO(2)}, and for $(k_1, k_2) \in \N^2$ this operator acts on $\HH^{k_1}(\C^n) (z^\top z)^{k_2}$ by a constant $c_{k_1,k_2}(f,\lambda) \in \C$ given by
	\begin{multline*}
		c_{k_1,k_2}(f,\lambda) = \\
		\frac{\displaystyle \int\limits_{0 \prec x \prec e_1} f\bigg(\frac{x'\cdot x'}{|x|^2 - 1}\bigg) (x_1 + x_2)^{k_1} (x_1^2 - x' \cdot x')^{k_2} ((1-x_1)^2 - x'\cdot x')^{\lambda -n} \dif x}{\displaystyle \int\limits_{0 \prec x \prec e_1} (x_1 + x_2)^{k_1} (x_1^2 - x' \cdot x')^{k_2} ((1-x_1)^2 - x'\cdot x')^{\lambda -n} \dif x}.
	\end{multline*}	
\end{theorem}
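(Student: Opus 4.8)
The plan is to recognize that this result is, in essence, a specialization of Theorem~\ref{thm:DQBGesturVolume}, with the only real work being to rewrite the integrand once the symbol is put in the form $a = f\circ\mu^{\SO(2)}$. The conceptual point that makes everything run is the last assertion of Corollary~\ref{cor:momentmapSO(2)}: the moment map $\mu^{\SO(2)}$ is not merely $\SO(2)$-invariant but $\SO(n)\times\SO(2)$-invariant. Hence, for any measurable $f$, the symbol $a = f\circ\mu^{\SO(2)}$ lies in $L^\infty(\DIV{n})^{\SO(n)\times\SO(2)}$, exactly as already recorded in the proof of Theorem~\ref{thm:momentmapSO(2)Toeplitz}. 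This places $a$ within the hypotheses of Theorem~\ref{thm:DQBGesturVolume}, which immediately yields that $T^{(\lambda)}_a$ preserves the Hilbert direct sum decomposition of Proposition~\ref{prop:isoSO(n)SO(2)} and acts on each summand $\HH^{k_1}(\C^n)(z^\top z)^{k_2}$ by a scalar; multiplicity-freeness of that decomposition guarantees these scalars are well defined.

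With this reduction in hand, I would take the second integral formula of Theorem~\ref{thm:DQBGesturVolume} and substitute $a = f\circ\mu^{\SO(2)}$, so that the only factor demanding attention is $a(E(\sqrt{x})) = f(\mu^{\SO(2)}(E(\sqrt{x})))$. The entire remaining task is then to evaluate the argument $\mu^{\SO(2)}(E(\sqrt{x}))$ as an explicit function of $x$. For this I would invoke the two identities already recorded in Remark~\ref{rmk:Evariablechange}, namely $|E(\sqrt{x})|^2 = x_1$ and $E(\sqrt{x})^\top E(\sqrt{x}) = \sqrt{x_1^2 - x'\cdot x'}$, the latter quantity being real and nonnegative on $\Omega$, so that $|E(\sqrt{x})^\top E(\sqrt{x})|^2 = x_1^2 - x'\cdot x'$.

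Feeding these into the formula of Corollary~\ref{cor:momentmapSO(2)} gives
\[
	\mu^{\SO(2)}(E(\sqrt{x})) = \frac{(x_1^2 - x'\cdot x') - x_1}{1 + (x_1^2 - x'\cdot x') - 2x_1} = \frac{x_1^2 - x'\cdot x' - x_1}{(1-x_1)^2 - x'\cdot x'},
\]
which is the expression to be inserted as the argument of $f$ in the integrand. Finally, I would use the region identification from Remark~\ref{rmk:Evariablechange}, namely $\Omega\cap(e_1-\Omega) = \{x : 0\prec x\prec e_1\}$, to rewrite the domain of integration as displayed. Assembling these substitutions into the ratio of Theorem~\ref{thm:DQBGesturVolume} then produces the asserted spectral integral formula for the scalars $c_{k_1,k_2}(f,\lambda)$.

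I expect no serious obstacle: once the $\SO(n)\times\SO(2)$-invariance of $\mu^{\SO(2)}$ is exploited, the theorem becomes a direct corollary of Theorem~\ref{thm:DQBGesturVolume}, and the single genuine computation—the explicit evaluation of $\mu^{\SO(2)}\circ E\circ\sqrt{\,\cdot\,}$—is routine given Remark~\ref{rmk:Evariablechange} and Corollary~\ref{cor:JordanRoots}. The only step meriting care is the algebraic simplification of this argument of $f$, since careless handling of the nested square-root expressions defining $E(\sqrt{x})$ can readily introduce an error in the final displayed form; I would therefore verify the simplified argument against a concrete point such as $x = \tfrac12 e_1$ before committing to it.
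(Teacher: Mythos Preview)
Your strategy is exactly the paper's: use the $\SO(n)\times\SO(2)$-invariance of $\mu^{\SO(2)}$ from Corollary~\ref{cor:momentmapSO(2)} to bring $a$ into the scope of Theorem~\ref{thm:DQBGesturVolume}, and then evaluate $\mu^{\SO(2)}(E(\sqrt{x}))$ via the two identities of Remark~\ref{rmk:Evariablechange}.

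There is, however, a genuine discrepancy in the final step. Remark~\ref{rmk:Evariablechange} correctly records $|E(\sqrt{x})|^2 = x_1$ (indeed, for $y=\sqrt{x}$ one has $y_1^2 + y'\cdot y' = x_1$ by \eqref{eq:solveroot1}), and with this your computation
\[
	\mu^{\SO(2)}(E(\sqrt{x})) \;=\; \frac{(x_1^2 - x'\cdot x') - x_1}{1 + (x_1^2 - x'\cdot x') - 2x_1} \;=\; \frac{x_1^2 - x'\cdot x' - x_1}{(1-x_1)^2 - x'\cdot x'}
\]
is correct. The paper's proof, on the other hand, silently replaces $|E(\sqrt{x})|^2$ by $x_1^2$ rather than $x_1$, obtaining $\frac{x'\cdot x'}{|x|^2 - 1}$, which is what appears in the statement. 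Your proposed sanity check at $x=\tfrac12 e_1$ detects this: there $E(\sqrt{x}) = \tfrac{1}{\sqrt2}\,e_1$, so $\mu^{\SO(2)}(E(\sqrt{x})) = \frac{1/4 - 1/2}{1 + 1/4 - 1} = -1$, agreeing with your expression but not with the paper's (which gives $0$). Thus your argument not only matches the paper's method but in fact corrects an algebraic slip in its displayed formula.
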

\begin{proof}
	We observe that for every $x$ satisfying $0 \prec x \prec e_1$ and from Remark~\ref{rmk:Evariablechange} we have
	\begin{align*}
		\mu^{\SO(2)}(E(\sqrt{x})) &= \frac{|E(\sqrt{x})^\top E(\sqrt{x})|^2 - |E(\sqrt{x})|^2}{1 + |E(\sqrt{x})^\top E(\sqrt{x})|^2 - 2|E(\sqrt{x})|^2} \\
		&= \frac{x_1^2 - x'\cdot x' - x_1^2}{1 + x_1^2 - x'\cdot x' - 2x_1^2} = \frac{x'\cdot x'}{|x|^2 - 1}.
	\end{align*}
	Hence, the result follows from Theorem~\ref{thm:DQBGesturVolume}.
\end{proof}

\begin{remark}\label{rmk:momentmapcoord}
	One should compare the integral formulas obtained for $\SO(n) \times \SO(2)$-invariant symbols and $\mu^{\SO(2)}$-symbols obtained in Theorems~\ref{thm:DQBGesturVolume} and \ref{thm:specintToeplitzSO(2)}, respectively. In view of the expression for $E(\sqrt{x})$ from Remark~\ref{rmk:Evariablechange}, the formula obtained for a $\mu^{\SO(2)}$-symbol is much more simple. This highlights the fact that the coordinates implicit in the use of moment maps and symplectic geometric are much more well adapted to the analysis of Bergman spaces and their Toeplitz operators. 
\end{remark}

\subsection*{Acknowledgement}
This research was partially supported by a Conacyt scolarship, by SNI-Conacyt and by the Conacyt Grants 166891 and 61517.

\end{document}